\newtheorem{theorem}{Theorem}[section]
\newtheorem{proposition}[theorem]{Proposition}
\newtheorem{corollary}[theorem]{Corollary}
\newtheorem{lemma}[theorem]{Lemma}
\newtheorem{definition}[theorem]{Definition}
\newtheorem{remark}[theorem]{Remark}
\newcommand{\norm}[1]{{\|#1\|}}
\newcommand\R{\mathbb{R}}
\def\sgn{\mathop{\mathrm{sgn}}\nolimits}
\def\Id{\mathop{\mathrm{Id}}\nolimits}
\def\<{{\langle}}
\def\>{{\rangle}}
\numberwithin{equation}{section}
\title[Decay estimates for beam equations with potentials on the line]
{Decay estimates for beam equations with potentials on the line}
\author{Shuangshuang Chen, Zijun Wan and Xiaohua Yao\textsuperscript{\dag}}
\address{Shuangshuang Chen, Department of Mathematics, Central China Normal University, Wuhan, 430079, P.R. China}
\email{chenss@mails.ccnu.edu.cn}
\address{Zijun Wan, Department of Mathematics, Central China Normal University, Wuhan, 430079, P.R. China}
\email{zijunwan@mails.ccnu.edu.cn}
\address{Xiaohua Yao, Department of Mathematics and Key Laboratory of Nonlinear Analysis and Applications(Ministry of Education), Central China Normal University, Wuhan, 430079, P.R. China}
\email{yaoxiaohua@ccnu.edu.cn}
\thanks{\textsuperscript{\dag} Corresponding author}
\date{\today}
\keywords{Decay estimates; Beam equation; Fourth order Schr\"odinger operator; Strichartz estimates}
\begin{document}
	
	\begin{abstract}\baselineskip=13pt
		This paper is devoted to the time decay estimates for  the following beam equation with a potential:
		\begin{equation*}
			\begin{cases}
				\partial_t^2 u + \left( \Delta^2 + m^2 + V(x) \right) u = 0, & (t, x) \in \mathbb{R} \times \mathbb{R}, \\
				u(0, x) = f(x), \quad \partial_t u(0, x) = g(x), &
			\end{cases}
		\end{equation*}
		where $V$ is a real-valued decaying potential on $\mathbb{R}$, and $m \in \mathbb{R}$.
		
		Let $H = \Delta^2 + V$ and  $P_{ac}(H)$ denote the projection onto the absolutely continuous spectrum of $H$.
		Then  for $m = 0$, we establish the following decay estimates of the solution operators:
		$$
		\left\|\cos (t \sqrt{H}) P_{ac}(H)\right\|_{L^1 \rightarrow L^{\infty}} +
		\left\|\frac{\sin (t \sqrt{H})}{t \sqrt{H}} P_{ac}(H)\right\|_{L^1 \rightarrow L^{\infty}} \lesssim |t|^{-\frac{1}{2}}.
		$$
		But for $m \neq 0$, the solutions have different time decay estimates from the case  where  $m=0$. Specifically, the $L^1$-$L^\infty$ estimates of $\cos (t \sqrt{H + m^2})$ and $\frac{\sin (t \sqrt{H + m^2})}{\sqrt{H + m^2}}$ are bounded by $O(|t|^{-\frac{1}{4}})$ in the low-energy part  and $O(|t|^{-\frac{1}{2}})$ in the high-energy  part.
		
		It is noteworthy that all  these results remain consistent with the  free cases (i.e., $V = 0$)  whatever zero is a regular point or a resonance of $H$.
		As consequences, we establish the corresponding Strichartz estimates, which are fundamental to study nonlinear problems of beam equations.
	\end{abstract}
	\maketitle
	
	\baselineskip=15pt
	\section{Introduction and main results}
	\subsection{Introduction}
	In this paper, we investigate the following beam equation with a real-valued decaying potential in one dimension:
	\begin{equation}\label{beam_equation}
		\begin{cases}
			\partial_t^2u+ \left( \Delta^2 + m^2 + V(x) \right) u = 0, & (t, x) \in \mathbb{R} \times \mathbb{R}, \\
			u(0, x) = f(x), \quad \partial_tu(0, x) = g(x), &
		\end{cases}
	\end{equation}
	where the potential $ V(x)$ satisfies  $ |V(x)| \lesssim \langle x \rangle^{-\mu}$ for some $\mu > 0 $ and  $m \in \mathbb{R}$.
	As we know, the equation was employed to model the bending of beams under various loading conditions  in dynamic way.
	In  the equation \eqref {beam_equation},  $u(t, x)$ describes the displacement of the beam at time $ t $ and position $ x $,  and the operator $ \Delta^2 + m^2 + V$ captures both the spatial bending behavior and external forces acting on the system.
	

	Let $ H= \Delta^2 + V $.
By  Kato-Rellich theorem,  as the operator sum of $\Delta^2$ and  $V$,   $H$ is   a self-adjoint operator on $L^2(\mathbb R)$ with domain $D(H)=D(\Delta^2)=H^4(\R)$.
	The solution to the beam equation \eqref{beam_equation} can be explicitly expressed as
	\begin{equation}\label{solution_m}
		u_m(t, x) = \cos (t \sqrt{H + m^2}) f(x) + \frac{\sin (t \sqrt{H + m^2})}{\sqrt{H + m^2}} g(x).
	\end{equation}
	The expression \eqref{solution_m} above depends on the branch choice of $ \sqrt{z} $ with $\Im\sqrt{z} \geq 0 $, ensuring that the solution $u(t,x) $ is well-defined even if $H $ is not positive. This work focuses on the decay estimates of the propagator operators $ \cos(t\sqrt{H+m^2})$ and $ \frac{\sin(t\sqrt{H+m^2})}{\sqrt{H+m^2}}$.
	
	It is well known that the spectrum of
	$H$ consists of  a discrete set of negative eigenvalues $\left\{\lambda_1 \leq \lambda_2 \leq \cdots<0\right\}$ and absolutely continuous spectrum $[0, \infty)$ with possible embedded eigenvalues, provided that the potential
	$V$ satisfies certain decay conditions, see, e.g.,  \cite{Agmon, RS}.
	
	Throughout the paper, we always assume that $H$ has no embedded positive eigenvalues (see Subsection \ref{eigenvalue} below for some sufficient condition).
	Under this assumption, we denote the negative eigenvalues of
	$H$ by $\lambda_j(j \geq 1)$,
	counting multiplicities of every $\lambda_j$, where
	$H \phi_j=\lambda_j \phi_j ( j \geq 1 )$ for $\phi_j \in L^2\left(\mathbb{R}\right)$, and let $P_{a c}(H)$ denote the projection onto the absolutely continuous spectrum space of $H$.
	Given these notations, the solution \eqref{solution_m} of the Beam equation
	\eqref{beam_equation}  can then be expressed as
	$$
	u_m(t,x):=u_{m,d}(t, x)+u_{m,c}(t, x),
	$$
	where
	$$
	\begin{aligned}
		& u_{m,d}(t,x)=\sum_{j} \cosh( t\sqrt{-\lambda_j-m^2})(f,\phi_j)\phi_j(x) +\frac{\sinh (t\sqrt{-\lambda_j-m^2})}{\sqrt{-\lambda_j-m^2}}(g,\phi_j)\phi_j(x), \\
		& u_{m,c}(t,x)=\cos (t \sqrt{H+m^2})P_{ac}(H)f(x) + \frac{\sin (t \sqrt{H+m^2})}{\sqrt{H+m^2}}P_{ac}(H)g(x).
	\end{aligned}
	$$
	The presence of negative eigenvalues in the spectrum of
	$H$ leads to exponential growth in
	$u_{m,d}(t,x)$, when $m^2<-\lambda_j$ for some $j$ and   $t$ becomes large. As a result, it is crucial to remove the discrete component associated with these negative eigenvalues and focus on the decay estimates for the continuous part $u_{m,c}(t, x)$.
	
	For the free case (i.e., $V=0$) in one dimension, the solution operators satisfy the following optimal time decay estimates:
	\begin{equation}\label{free estimate}
		\|\cos (t \Delta)\|_{L^{1}\rightarrow L^{\infty}}+\left \|\frac{\sin (t \Delta)}{t\Delta}\right \|_{L^1 \rightarrow L^{\infty}}  \lesssim|t|^{-\frac{1}{2}},
	\end{equation}
	and  e.g.  for $m=1,$
	\begin{align}\label{free nonhom-estimate}
		&\left\|\cos (t \sqrt{\Delta^2+1})\chi_j(-\Delta)\right\|_{L^1 \rightarrow L^{\infty}} + \left \|\frac{\sin (t \sqrt{\Delta^2+1})}{\sqrt{\Delta^2+1}} \chi_j(-\Delta)\right \|_{L^1 \rightarrow L^{\infty}}\lesssim
		\begin{cases}
			|t|^{-\frac{1}{4}}, & \text{if } j=1, \\
			|t|^{-\frac{1}{2}}, & \text{if } j=2,
		\end{cases}
	\end{align}
	where $\chi_1(\lambda)$ is a smooth cut-off function supported near zero and $\chi_2(\lambda) := 1-\chi_1(\lambda)$, see Theorem \ref{main_theorem} below, also see \cite{CMY}. The essential reason leading to the difference between the estimates \eqref{free estimate} and \eqref{free nonhom-estimate} is that the nonlocal operator $\sqrt{\Delta^2+1}$ behaves as $1+\frac{1}{2}\Delta^2$ in low frequent part, and like $-\Delta$ in high frequent part.
	
	The primary objective of this paper is to establish time decay estimates for the solution of the  beam equation
	\eqref{beam_equation} with a potential $V$ and parameter  $m\in\R$, and derive  specific Strichartz estimates.  Remarkably, all our results are consistent with the corresponding free cases (i.e., $V=0$), whatever zero is a regular point or a resonance of $H$.
	To achieve this, we begin with  Stone's formula, Littlewood-Paley decomposition and oscillatory integral theory to obtain the results of the free case. Subsequently, we
	analyze the asymptotic expansions of the resolvent $R_V(\lambda^4)$ for $\lambda$ near zero, taking into account the presence of resonances. Then, we establish the desired time decay bounds for all types of resonances.

	\subsection{Notations}\label{Notations}
	In this subsection, we collect some notations used throughout the paper.
	\begin{itemize}
		\item
		For $a\in \mathbb{R}$, $a\pm$ denote $a\pm \epsilon$ for any small $\epsilon >0$. $\left \lfloor  a\right \rfloor$ denotes the largest integer less than or equal to $a$.
		\vskip0.2cm
		\item For $a, b\in \mathbb{R}^{+}$, $a\lesssim b$ (resp. $a\gtrsim b$) means $a\le cb$ (resp. $a\ge cb$) with some constant $c>0$. Moreover, we denote $a \lesssim_m b$ if the constant depends on the variable $m$.
		\vskip0.2cm
		\item Let $\varphi $ be a smooth function such that $\varphi(s)=1$ for
		$|s| \leq \frac{1}{2}$ and $\varphi(s)=0$ for $|s| \geq 1$. $\varphi_N(s):=\varphi\left(2^{-N} s\right)-\varphi\left(2^{-N+1} s\right), N \in \mathbb{Z}$. Then $\varphi_N(s)=\varphi_0\left(2^{-N} s\right), \operatorname{supp} \varphi_0 \subset$ $\left[\frac{1}{4}, 1\right]$ and
		\begin{equation}\label{varphi_0}
			\sum_{N=-\infty}^{\infty} \varphi_0\left(2^{-N} s\right)=1,\quad  s \in \mathbb{R} \backslash\{0\}.
		\end{equation}
		\vskip0.2cm
		\item Let $\chi_1 \in C_c^{\infty}(\mathbb{R})$ such that $\chi_1(\lambda)=1$ for $|\lambda|<\lambda_0\ll1$ and $\chi_1(\lambda)=0$ if $|\lambda|>2\lambda_0$, where $\lambda_0$ is some sufficiently small positive constant depending on low energy expansion of $\left( M^{\pm}(\lambda)\right)^{-1}$ in Theorem \ref{lem_M}. And denote $\chi_2(\lambda):=1-\chi_1(\lambda)$.
		\vskip0.2cm
		\item A bounded operator $K \in \mathbb B(L^2)$ (with kernel $K(x,y)$) is said to be absolutely bounded ($K \in \mathbb{AB}(L^2)$ for short) if the integral operator $|K|$ with kernel $|K(x,y)|$ is also bounded on $L^2$. Note that $K \in \mathbb{AB}(L^2)$ if  $K$ is a Hilbert--Schmidt operator on $L^2$.
		\vskip0.2cm
		\item 	Let $\langle \cdot \rangle = 1 + | \cdot |$.
		For $s \in \mathbb{R}$, we  define the weighted $L^2 $ spaces:
		$$L^2_s(\mathbb{R}) := \{ f \in L_{\text {loc }}^2(\mathbb{R}) \mid  \langle \cdot \rangle^s f \in L^2(\mathbb{R}) \} .$$
	\end{itemize}
	
	\subsection{Main results}\label{main result}
	Before giving our results, we need to recall the notions of the zero resonances for the operator $H=\Delta^2+V(x)$ on $\mathbb{R}$ due to Soffer-Wu-Yao \cite{SWY22}.  We define
	$$
	W_\sigma(\mathbb{R})=\bigcap_{s>\sigma} L_{-s}^2(\mathbb{R})
	$$
	which is increasing in $\sigma\in\R$ and satisfies $L_{-\sigma}^2(\mathbb{R}) \subset W_\sigma(\mathbb{R})$. Note that $\langle x \rangle^\alpha  \in W_\sigma(\mathbb{R})$ if $\sigma \geq \alpha+\frac{1}{2}$. In particular, $f \in W_{\frac{1}{2}}(\mathbb{R})$ and $\langle x\rangle f \in W_{\frac{3}{2}}(\mathbb{R})$ for any $f \in L^{\infty}(\mathbb{R})$.
	\begin{definition}
		\label{definition_resonance} Let $H=\Delta^2+V(x)$ and $|V(x)|\lesssim \left \langle  x\right \rangle^{-\mu}$ for some $\mu>0$.
		We say that
		\begin{itemize}
			\item[(i)] zero is the first kind resonance  of $H$ if there exists some nonzero $\phi\in W_{\frac{3}{2}}(\mathbb{R})$ but no any nonzero $\phi\in W_{\frac{1}{2}}(\mathbb{R})$ such that $H\phi=0$ in the distributional sense;
			\item[(ii)] zero is the second kind resonance of $H$ if there exists some nonzero $\phi\in W_{\frac{1}{2}}(\mathbb{R})$ but no any nonzero $\phi\in L^2(\mathbb{R})$ such that $H\phi=0$ in the distributional sense;
			\item[(iii)]  zero is an eigenvalue of $H$ if there exists  some nonzero $\phi\in L^2(\mathbb{R})$ such that $H\phi=0$ in the distributional sense;
			\item[(iv)]  zero is a regular point of $H$ if $H$ has neither zero eigenvalue nor zero resonances.
		\end{itemize}
	\end{definition}
	
		It was observed by Goldberg \cite{Goldberg} (see also Remark 1.2 and Remark 3.6  in \cite{SWY22}) that when a potential $V(x)$  decays polynomially,  zero eigenvalue in the one-dimensional fourth order Schr\"odinger operator may not occur, see, e.g., \cite{DeTr} for Schr\"odinger operator case. Specifically, if the potential $V$ satisfies the decay condition of Theorem \ref{main_theorem}, then $H$ has no zero eigenvalue (see Lemma 3.5 in \cite{SWY22}). Therefore, the zero eigenvalue case will be excluded in the subsequent theorems.
	Now we state our main results.
	
	\begin{theorem}\label{main_theorem}
		Let $|V(x)| \lesssim \left \langle  x\right \rangle^{-\mu}$ with some $\mu>0$ depending on the following zero energy types:
		\begin{equation}\label{condition}
			\mu> \begin{cases}13, & \text { zero is a regular point, } \\ 17, & \text { zero is the first kind resonance, } \\ 25, & \text { zero is the second kind resonance. }\end{cases}
		\end{equation}
		Assume that $H=\Delta^2+V$ has no positive embedded eigenvalue and let  $P_{a c}(H)$ denote the projection onto the absolutely continuous spectrum space of $H$. Then
		\begin{equation}\label{homoge-case}
			\left\|\cos (t \sqrt{H}) P_{ac}(H)\right\|_{L^1 \rightarrow L^{\infty}}+
			\left \|\frac{\sin (t \sqrt{H})}{t\sqrt{H}} P_{a c}(H)\right \|_{L^1 \rightarrow L^{\infty}} \lesssim|t|^{-\frac{1}{2}}.
		\end{equation}
		Moreover, let $ \Psi_j(H)=P_{ac}(H) \chi_j(H), j=1,2$, then for $m \neq 0,$
		$$
		\begin{aligned}
			&\left\|\cos (t \sqrt{H+m^2}) \Psi_j(H)\right\|_{L^1 \rightarrow L^{\infty}} \lesssim
			\begin{cases}
				\left\langle  m \right\rangle^{\frac{1}{2}}|t|^{-\frac{1}{4}}, & \text{if}\ j=1, \\
				\left\langle  m \right\rangle |t|^{-\frac{1}{2}}, & \text{if}\  j=2;
			\end{cases} \\
			& \left \|\frac{\sin (t \sqrt{H+m^2})}{\sqrt{H+m^2}} \Psi_j(H)\right \|_{L^1 \rightarrow L^{\infty}}\lesssim
			\begin{cases}
				m^{-1}\left\langle  m \right\rangle^{\frac{1}{2}}|t|^{-\frac{1}{4}}, & \text{if}\  j=1, \\
				(1+\frac{1}{m}) |t|^{-\frac{1}{2}}, & \text{if}\ j=2.
			\end{cases}
		\end{aligned}
		$$
		As a result, for $m\neq 0$ we have
		\begin{equation}\label{nonhomogeneous}
			\left\|\cos (t \sqrt{H+m^2})P_{ac}(H) \right\|_{L^1 \rightarrow L^{\infty}}+\left \|\frac{\sin (t \sqrt{H+m^2})}{\sqrt{H+m^2}} P_{ac}(H)\right \|_{L^1 \rightarrow L^{\infty}}\lesssim_m
			\begin{cases}
				|t|^{-\frac{1}{2}}, & \text{for}\  |t|\le 1;\\
				|t|^{-\frac{1}{4}}, & \text{for}\  |t|>1.
			\end{cases}
		\end{equation}
	\end{theorem}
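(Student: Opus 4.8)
The plan is to represent every propagator through Stone's formula as a spectral integral of the jump of the resolvent $R_V(z)=(H-z)^{-1}$ across the continuous spectrum. Writing the spectral parameter as $\mu=\lambda^4$ (the natural variable for a fourth-order operator, whose free resolvent has branch structure in $\lambda=\mu^{1/4}$), one has for a suitable even function $F$
$$
F(\sqrt{H+m^2})\,P_{ac}(H)=\frac{1}{2\pi i}\int_0^\infty F\bigl(\sqrt{\lambda^4+m^2}\bigr)\,\bigl[R_V(\lambda^4+i0)-R_V(\lambda^4-i0)\bigr]\,4\lambda^3\,d\lambda,
$$
the cosine propagators corresponding to $F(s)=\cos(ts)$ and the sine propagators to $F(s)=\tfrac{\sin(ts)}{s}$ (the extra $t^{-1}$ in the $m=0$ normalization $\tfrac{\sin(t\sqrt H)}{t\sqrt H}$ being harmless). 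First I would insert the energy partition $\chi_1(\lambda)+\chi_2(\lambda)=1$ to split each operator into a low-energy piece ($\lambda$ near $0$) and a high-energy piece, matching the $\Psi_1,\Psi_2$ decomposition in the statement, and then estimate the $L^1\!\to\!L^\infty$ norm by bounding the integral kernel pointwise, uniformly in $x,y\in\R$.

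The decay rates are dictated entirely by the oscillatory factor. Once the resolvent kernels are reduced, via the free resolvent $R_0(\lambda^4)$, to the oscillatory building blocks $e^{\pm i\lambda|x-y|}$ (together with the exponentially decaying piece $e^{-\lambda|x-y|}$, which is even more favorable) multiplied by smooth, absolutely bounded amplitudes, each piece becomes an oscillatory integral in $\lambda$ with phase $\lambda|x-y|\pm t\sqrt{\lambda^4+m^2}$. When $m=0$ this is $\lambda|x-y|\pm t\lambda^2$, a nondegenerate quadratic phase, so van der Corput / stationary phase yields the $|t|^{-1/2}$ bound of \eqref{homoge-case}. When $m\neq0$ the low-energy expansion $\sqrt{\lambda^4+m^2}=m+\tfrac{\lambda^4}{2m}+O(\lambda^8)$ produces a quartic leading phase $\pm t\lambda^4/(2m)$, whose degenerate stationary point yields only $|t|^{-1/4}$ decay (the $\langle m\rangle^{1/2}$ and $m^{-1}$ weights arising from the amplitude and the $(\!\sqrt{H+m^2}\,)^{-1}$ factor, respectively). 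In the high-energy region $\sqrt{\lambda^4+m^2}\sim\lambda^2$ behaves like $-\Delta$, restoring the quadratic phase and the $|t|^{-1/2}$ rate. This is exactly the mechanism recorded for the free operator in \eqref{free nonhom-estimate}.

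The substantive work, and the main obstacle, is the low-energy analysis in the presence of a zero resonance. Here I would use the standard symmetric resolvent identity to express $R_V(\lambda^4\pm i0)$ through $R_0(\lambda^4\pm i0)$ and the inverse of $M^\pm(\lambda)=U+v\,R_0(\lambda^4\pm i0)\,v$, with $v=|V|^{1/2}$ and $U=\sgn V$, and then invoke the asymptotic expansion of $\bigl(M^\pm(\lambda)\bigr)^{-1}$ from Theorem \ref{lem_M}. When zero is a regular point this inverse is uniformly bounded (and absolutely bounded), and the contribution is treated as in the free case. When zero is a first- or second-kind resonance, $\bigl(M^\pm(\lambda)\bigr)^{-1}$ acquires negative powers of $\lambda$ tied to the resonance projections; the delicate point is to show that, after pairing these singular coefficients against the oscillatory factor and the measure $\lambda^3\,d\lambda$, the apparently dangerous terms either cancel through the orthogonality relations attached to the resonance functions (the $\phi\in W_{3/2}(\R)$ or $W_{1/2}(\R)$ structure of Definition \ref{definition_resonance}) or recombine into integrals still controlled by $|t|^{-1/4}$ (low energy, $m\neq0$) or $|t|^{-1/2}$ ($m=0$). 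This is precisely what makes the rates \emph{independent} of the resonance type, and the decay thresholds on $\mu$ in \eqref{condition} are exactly what is needed to justify the requisite number of terms in these expansions together with the Hilbert--Schmidt bounds on the remainders.

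For the high-energy region I would instead expand $R_V$ in a finite Born series plus a remainder, each term again being an oscillatory integral with the $-\Delta$-type quadratic phase, so that the $|t|^{-1/2}$ rate persists after summation. Finally, combining the localized bounds for $m\neq0$ gives \eqref{nonhomogeneous}: since $|t|^{-1/2}\ge|t|^{-1/4}$ for $|t|\le1$ while the inequality reverses for $|t|>1$, the sum of the low-energy $|t|^{-1/4}$ estimate and the high-energy $|t|^{-1/2}$ estimate is dominated by $|t|^{-1/2}$ on $|t|\le1$ and by $|t|^{-1/4}$ on $|t|>1$. Throughout, the discrete spectrum and any zero eigenvalue are excluded by the projection $P_{ac}(H)$, by the no-embedded-eigenvalue hypothesis, and by the absence of a zero eigenvalue guaranteed under \eqref{condition}.
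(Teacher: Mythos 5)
Your overall architecture coincides with the paper's: Stone's formula in the variable $\lambda$ with $z=\lambda^4$, the $\chi_1/\chi_2$ energy splitting, the symmetric resolvent identity together with the low-energy expansions of $\bigl(M^{\pm}(\lambda)\bigr)^{-1}$ from Theorem \ref{lem_M} (with the vanishing properties of the projections $Q_\alpha$ absorbing the negative powers of $\lambda$), a Born expansion plus limiting-absorption bounds at high energy, and van der Corput estimates on the phases $\pm t\sqrt{\lambda^4+m^2}\pm\lambda|x-y|$; the paper merely organizes these oscillatory integrals through a Littlewood--Paley decomposition (Lemmas \ref{estimate-integral} and \ref{sum}) rather than a single stationary-phase argument, and your account of the resonance cancellations and of the gluing argument for \eqref{nonhomogeneous} is consistent with what the paper does.

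There is, however, one genuine gap: the term $\frac{\sin (t \sqrt{H})}{t\sqrt{H}} P_{ac}(H)$ in \eqref{homoge-case}, i.e.\ the sine propagator with $m=0$. You propose to run the same Stone's-formula-plus-oscillatory-integral scheme with $F(s)=\frac{\sin(ts)}{s}$, calling the normalization ``harmless.'' But for $m=0$ the spectral multiplier is $\frac{\sin(t\lambda^2)}{t\lambda^2}$, and your scheme requires splitting $\sin(t\lambda^2)$ into the exponentials $e^{\pm it\lambda^2}$ so that each piece carries one of your phases. After this splitting the amplitude of each piece behaves like $\lambda^{-2}$ near $\lambda=0$ (the case $\ell=-1$ in the paper's notation), so each split integral diverges at $\lambda=0$; equivalently, the dyadic pieces obey only the bound $2^{-N}\min\bigl(1,(|t|2^{2N})^{-1/2}\bigr)$, whose sum over $N\to-\infty$ is infinite (this is precisely why Lemma \ref{sum}(i) and Propositions \ref{prop_K}--\ref{prop_Gamma} are restricted to $-\frac12<\ell\le 0$ when $m=0$). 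The cancellation $\frac{\sin(t\lambda^2)}{\lambda^2}\to t$ as $\lambda\to 0$ is exactly what the splitting destroys, so van der Corput with smooth bounded amplitudes does not apply to this term. The paper avoids the difficulty entirely: it proves only the cosine bound by the oscillatory-integral machinery and then uses the identity $\frac{\sin (t \sqrt{H})}{\sqrt{H}} P_{ac}(H) = \frac{1}{2} \int_{-t}^{t} \cos(s \sqrt{H}) P_{ac}(H)\, ds$, which gives $\bigl\|\frac{\sin (t \sqrt{H})}{\sqrt{H}} P_{ac}(H)\bigr\|_{L^1 \rightarrow L^{\infty}} \lesssim \int_{-t}^{t}|s|^{-\frac12}ds \lesssim |t|^{\frac12}$, hence $|t|^{-\frac12}$ after dividing by $t$. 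Your proposal needs this (or an equivalent device keeping the sine intact near $\lambda=0$); note that for $m\neq 0$ your direct treatment is fine, since $\sqrt{\lambda^4+m^2}\ge m$ removes the singularity.
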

	\begin{remark}\label{remark_1}A few of  remarks on Theorem \ref{main_theorem} are given as follows:
		
		\begin{itemize}
			\item
				We observe that the decay assumption \eqref{condition} on the potential $V$ arises from the low-energy expansions of the resolvent $R^\pm_V(\lambda^4)=(H - \lambda^4 \mp i0)^{-1}$  by Soffer-Wu-Yao \cite{SWY22} (see Theorem \ref{lem_M}),   while the high energy part  only requires the lower decay rate
			$\mu>2$ (see Theorem \ref{main_theorem_high}).  Hence the potential assumption \eqref{condition} may not be optimal, even in the regular case. However,
			the decay estimates \eqref{homoge-case} and \eqref{nonhomogeneous} for the case $ V \neq 0 $ are actually optimal, as they remain consistent with those of the free case.
			\vskip0.2cm
			\item   The solution operators $ \frac{\sin(t\sqrt{H})}{\sqrt{H}}$ and $ \frac{\sin(t\sqrt{H+m^2})}{\sqrt{H+m^2}}$ have different $L^1$-$L^\infty$ estimates in the low energy part. The former behaves as the time growth $O(|t|^{1/2})$ as $|t| \to \infty $ due to the additional singular factor $\frac{1}{\sqrt{H}}$, while the latter has the time decay $O(|t|^{-1/4})$ for any $m\neq 0$.
\vskip0.2cm
			\item
			We may point out that the time decay rate may change for various types of dispersive estimates when a zero resonance or eigenvalue occurs. For example, see \cite{JK, ES1} for three-dimensional Schr\"odinger operators $-\Delta+V$ and \cite{Erdogan-Green} for two-dimensional Schr\"odinger operators. In the case of the classical wave equation with potential (see \eqref{waveequation-twoorder} below), it is essential to note that a first-kind zero resonance does not alter the decay rate, while a second-kind resonance can diminish it, as shown in \cite{Green14} for two dimensions and \cite{EGG14} for four dimensions.
			\vskip0.2cm
			\item In the context of fourth-order Schr\"odinger operators $\Delta^2+V$, the impact of zero resonances has been considered by \cite{LSY21, EGT19, GT19}. In particular,  Chen et al.\cite{CLSY_ArXiv} was the first to  prove that the $L^1$-$L^\infty$ estimates for the solution operators  $\cos(t\sqrt{H})$ and $
			\frac{\sin(t\sqrt{H})}{t\sqrt{H}}
			$ are $O(|t|^{-3/2})$ in dimension three when zero is a regular point or a first-kind resonance. Furthermore, they showed that the decay rate changes when other types of zero-energy resonances occur.
			In comparison to the work of \cite{CLSY_ArXiv}, our results in  dimension one demonstrate that the estimates remain independent of whether zero is a regular point or a resonance of $H$.
		\end{itemize}
	\end{remark}
	Note that the operators $\cos (t \sqrt{H}) P_{ac}(H)$ and $\frac{\sin(t\sqrt{H})}{t \sqrt{H}} P_{ac}(H) $ are uniformly bounded on $L^2$. Hence by  Riesz-Thorin interpolation and the decay estimate \eqref{homoge-case} of Theorem \ref{main_theorem}, we immediately have the following corollary.
	\begin{corollary}
		Let $H = \Delta^2 + V$ satisfy the conditions specified in Theorem \ref{main_theorem}. Then, for all $1 \leq p \leq 2$ and $\frac{1}{p} + \frac{1}{p'} = 1$, the following  $L^p\to L^{p^{\prime}}$ estimates hold:
		\begin{equation}\label{Lp-estimate}
			\left\|\cos(t \sqrt{H}) P_{ac}(H) f\right\|_{L^{p'}} + \left\|\frac{\sin(t\sqrt{H})}{t \sqrt{H}} P_{ac}(H) f\right\|_{L^{p'}} \lesssim |t|^{-\left(\frac{1}{p} - \frac{1}{2}\right)} \|f\|_{L^p}.
		\end{equation}
	\end{corollary}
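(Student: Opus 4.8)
The plan is to obtain the $L^p \to L^{p'}$ bounds by interpolating two endpoint estimates for each of the propagators $\cos(t\sqrt{H})P_{ac}(H)$ and $\frac{\sin(t\sqrt{H})}{t\sqrt{H}}P_{ac}(H)$ separately. The first endpoint is the $L^1 \to L^\infty$ decay \eqref{homoge-case} already established in Theorem \ref{main_theorem}, which furnishes operator norms bounded by $C|t|^{-1/2}$. The second endpoint is the $L^2 \to L^2$ bound, which I would verify directly from the spectral theorem: since the absolutely continuous spectrum of $H$ is contained in $[0,\infty)$, the scalar multipliers $\cos(t\sqrt{\lambda})$ and $\frac{\sin(t\sqrt{\lambda})}{t\sqrt{\lambda}}$ are bounded by $1$ uniformly in $t$ and in $\lambda \ge 0$ (using the elementary inequality $|\sin s| \le |s|$), so both operators have $L^2 \to L^2$ norm at most $1$, independently of $t$.

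With these two endpoints in hand, I would apply the Riesz--Thorin interpolation theorem to each operator. Choosing the interpolation parameter $\theta \in [0,1]$ with $(p_0,q_0) = (1,\infty)$ and $(p_1,q_1) = (2,2)$ gives $\frac{1}{p_\theta} = 1 - \frac{\theta}{2}$ and $\frac{1}{q_\theta} = \frac{\theta}{2}$, so that automatically $q_\theta = p_\theta'$, matching the conjugate pairing in the statement. Solving $\frac{1}{p} = 1 - \frac{\theta}{2}$ yields $\theta = 2 - \frac{2}{p}$, which ranges over $[0,1]$ precisely as $p$ ranges over $[1,2]$. The interpolated operator norm is then bounded by $\bigl(C|t|^{-1/2}\bigr)^{1-\theta}\cdot 1^{\theta} = C^{1-\theta}\,|t|^{-(1-\theta)/2}$, and the short computation $\frac{1-\theta}{2} = \frac{1}{p} - \frac{1}{2}$ recovers exactly the claimed decay rate.

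Finally I would add the two resulting bounds to obtain the displayed sum estimate \eqref{Lp-estimate}, absorbing the constants into the implicit $\lesssim$. The main (and essentially only) point requiring care is the uniform $L^2$ boundedness of the sine propagator, where one must confirm that the singular-looking factor $\frac{1}{t\sqrt{H}}$ is in fact harmless on the absolutely continuous subspace because the bound $|\sin s| \le |s|$ keeps the multiplier $\frac{\sin(t\sqrt{\lambda})}{t\sqrt{\lambda}}$ bounded by $1$ there; everything else is a routine application of interpolation together with the exponent computation above.
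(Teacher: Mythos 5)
Your proposal is correct and follows essentially the same route as the paper: the paper likewise notes that $\cos(t\sqrt{H})P_{ac}(H)$ and $\frac{\sin(t\sqrt{H})}{t\sqrt{H}}P_{ac}(H)$ are uniformly bounded on $L^2$ and then combines this with the $L^1\to L^\infty$ bound \eqref{homoge-case} via Riesz--Thorin interpolation. Your additional details --- the spectral-theorem justification of the $L^2$ bound using $|\sin s|\le|s|$ on the absolutely continuous spectrum $[0,\infty)$, and the explicit exponent computation $\theta = 2-\frac{2}{p}$ giving decay $\frac{1}{p}-\frac{1}{2}$ --- are exactly what the paper leaves implicit.
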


	Recently, Mizutani-Wan-Yao \cite{MWY22} showed that the following wave operators
	$$
	W_\pm=W_\pm(H,\Delta^2) :=s-\lim_{t\to\pm\infty}e^{itH}e^{-it\Delta^2}
	$$
	are bounded on $L^p(\mathbb{R})$ for $1<p<\infty$ if $H=\Delta^2+V$ satisfying $|V(x)|\lesssim \langle x \rangle ^{-\mu}$ for some $\mu>0$.
	Note that $W_\pm$ satisfy the intertwining identity
	$f(H)P_{ac}(H)=W_\pm f(\Delta^2)W_\pm^*$,
	where $f$ is any Borel measurable function on $\mathbb{R}$. By the $L^p$-boundedness of $W_\pm$ and $W_\pm^*$, one can tranfer the $L^p$-$L^{p'}$ estimates of $f(H)P_{ac}(H)$ to the same estimates of $f(\Delta^2)$ by
	\begin{equation}\label{wave operator methods}
		\|f(H)P_{ac}(H)\|_{L^p\to L^{p'}}\le \|W_\pm\|_{L^{p'}\to L^{p'}}\ \|f(\Delta^2)\|_{L^p\to L^{p'}}\ \|W_\pm^*\|_{L^{p}\to L^{p}}.
	\end{equation}
	Let $f(\lambda)=\cos(t\sqrt{\lambda})$ and $\frac{\sin(t\sqrt{\lambda})}{t\sqrt{\lambda}}$. Then for any $1<p\le 2$, the $L^p$-$L^{p'}$ estimates \eqref{Lp-estimate} can be obtained from  the \eqref{wave operator methods} and the free estimate \eqref{free estimate} of  $\cos(t\Delta)$ and $\frac{\sin(t\Delta)}{t\Delta}$.
	However, note that wave operators $W_\pm$ are unbounded on the endpoint spaces  $L^1$ and $L^\infty$ even for compactly supported potentials $V$ (see \cite{MWY22}), therefore  the time decay estimates in Theorem \ref{main_theorem} can not be obtained by wave operator methods.
	\vskip0.3cm
	Finally, by using the $L^1$-$L^\infty$ estimates in Theorem \ref{main_theorem} and applying abstract arguments in Keel and Tao \cite{Kee98},  we will
	establish Strichartz estimates for the solution of Beam equations \eqref{beam_equation}. Recall first that the exponent pair $(q, r)$ is said to be $\sigma$-\emph{admissible} if $q, r \geq 2$, $(q, r, \sigma) \neq (2, \infty, 1)$, and
	\begin{equation}\label{admissible}
		\frac{1}{q} + \frac{\sigma}{r} \leq \frac{\sigma}{2}.
	\end{equation}
	If the equality holds in \eqref{admissible}, we refer to $(q, r)$ as \emph{sharp} $\sigma$-\emph{admissible}; Otherwise, it is termed \emph{nonsharp} $\sigma$-\emph{admissible}. Moreover, the solution to \eqref{beam_equation} can be represented as
	$$
		u_m(t, x) = \cos(t \sqrt{H+m^2}) f(x) + \frac{\sin(t \sqrt{H+m^2})}{\sqrt{H+m^2}} g(x),\ \ m\in \R.
	$$
	
	\begin{theorem}
		Let $H = \Delta^2 + V$ satisfy the conditions specified in Theorem \ref{main_theorem} and  let $(q,r), (\widetilde{q},\widetilde{r})$ be sharp $\frac{1}{2}$-admissible, then the following estimates hold:
	\begin{itemize}
		\item[(i)] The homogeneous Strichartz estimate
			$$
			\left\| e^{\pm it\sqrt{H}}P_{ac}(H)f\right\|_{L_t^q L_x^r} \lesssim \left\| f\right\|_{L^2},
			$$
			which gives that
			$$
			\left\|P_{a c}(H)u_0\right \|_{L_t^q L_x^r} \lesssim \left\| f\right\|_{L^2}+\left\| \frac{P_{ac}(H)}{\sqrt{H}}g\right\|_{L^2};
			$$
			\item[(ii)] The dual homogeneous Strichartz estimate
			$$
			\begin{aligned}
				\left\|\int_{\mathbb{R}} e^{\pm it\sqrt{H}}P_{ac}(H)F(s,x) ds\right\|_{L_x^2} \lesssim \left\| F\right\|_{L_t^{q^\prime} L_x^{r^\prime}};
			\end{aligned}
			$$
			\item[(iii)] The retarded Strichartz estimate
			$$
			\left\| \int_{s<t}e^{\pm i(t-s)\sqrt{H}}P_{ac}(H)F(s,x) ds\right\|_{L_t^{q} L_x^{r}} \lesssim \left\| F\right\|_{L_t^{\widetilde{q}^\prime} L_x^{\widetilde{r}^\prime}}.
			$$
	\end{itemize}
\end{theorem}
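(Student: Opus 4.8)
The plan is to deduce all three estimates from the abstract Strichartz machinery of Keel and Tao \cite{Kee98}, applied to the half-wave propagator $U(t) := e^{\pm it\sqrt{H}}P_{ac}(H)$ with decay exponent $\sigma = \tfrac12$. For the abstract theorem it suffices to verify two ingredients: the uniform energy bound $\|U(t)\|_{L^2\to L^2}\lesssim 1$ and the untruncated dispersive bound $\|U(t)U(s)^*\|_{L^1\to L^\infty}\lesssim |t-s|^{-1/2}$. Once these hold, \cite{Kee98} yields precisely the homogeneous estimate in (i), its dual in (ii), and the retarded estimate in (iii) for every sharp $\tfrac12$-admissible pair. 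I would also note that, since $\sigma = \tfrac12 < 1$, a sharp $\tfrac12$-admissible pair satisfies $\tfrac1q = \tfrac14 - \tfrac1{2r}$ with $r\ge2$, forcing $q\in[4,\infty]$; hence every such pair is non-endpoint ($q>2$), so the delicate $q=2$ endpoint of \cite{Kee98} never arises and the full range of estimates, including the retarded one, follows without endpoint subtleties.

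First I would verify the energy bound. On the range of $P_{ac}(H)$ the operator $H$ has absolutely continuous spectrum contained in $[0,\infty)$, so $\sqrt H\ge 0$ and $e^{\pm it\sqrt H}$ is a spectral multiplier of modulus one; by the spectral theorem $\|U(t)\|_{L^2\to L^2}\le 1$ uniformly in $t$. For the dispersive bound, the group property and self-adjointness of $H$ give $U(t)U(s)^* = e^{\pm i(t-s)\sqrt H}P_{ac}(H)$, so it reduces to $\|e^{\pm i\tau\sqrt H}P_{ac}(H)\|_{L^1\to L^\infty}\lesssim |\tau|^{-1/2}$. This is exactly the content underlying the homogeneous decay estimate \eqref{homoge-case} of Theorem \ref{main_theorem}: via Stone's formula and the substitution turning the spectral parameter of $H$ into $\lambda^4$, the kernel of $e^{\pm i\tau\sqrt H}P_{ac}(H)$ is an oscillatory integral with phase $\tau\lambda^2$, and the $|\tau|^{-1/2}$ bound is obtained by the same Littlewood--Paley decomposition, stationary-phase/van der Corput estimates, and resonance expansions used to prove \eqref{homoge-case}.

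With both hypotheses in hand, part (i) follows at once, and the stated bound on $P_{ac}(H)u_0$ is obtained by writing $P_{ac}(H)u_0 = \cos(t\sqrt H)P_{ac}(H)f + \tfrac{\sin(t\sqrt H)}{\sqrt H}P_{ac}(H)g$, expanding $\cos(t\sqrt H)=\tfrac12(e^{it\sqrt H}+e^{-it\sqrt H})$ and $\tfrac{\sin(t\sqrt H)}{\sqrt H}=\tfrac1{2i}(e^{it\sqrt H}-e^{-it\sqrt H})\tfrac1{\sqrt H}$, and applying the homogeneous estimate to $P_{ac}(H)f$ and to $\tfrac{P_{ac}(H)}{\sqrt H}g$ respectively; this explains the appearance of $\|\tfrac{P_{ac}(H)}{\sqrt H}g\|_{L^2}$ on the right-hand side. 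Parts (ii) and (iii) are the dual and retarded statements of the same abstract theorem and require no further input.

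The one point requiring care is that Theorem \ref{main_theorem} is stated for $\cos(t\sqrt H)$ and $\tfrac{\sin(t\sqrt H)}{t\sqrt H}$ rather than for the half-wave group $e^{\pm it\sqrt H}$, and the latter cannot be recovered from the two stated bounds alone, since $\sin(t\sqrt H) = t\sqrt H\cdot\tfrac{\sin(t\sqrt H)}{t\sqrt H}$ involves the unbounded factor $\sqrt H$. Thus the main obstacle is to confirm, by returning to the proof of Theorem \ref{main_theorem}, that it in fact establishes the half-wave dispersive bound $\|e^{\pm it\sqrt H}P_{ac}(H)\|_{L^1\to L^\infty}\lesssim|t|^{-1/2}$ directly; this is the case, because the oscillatory integrals analyzed there carry the phase $e^{\pm it\lambda^2}$ and the cosine estimate in \eqref{homoge-case} is merely its real part, so the building block needed for Keel--Tao is already furnished.
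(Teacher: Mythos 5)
Your proposal is correct and takes essentially the same route as the paper: the paper also obtains all three estimates by feeding the uniform $L^2$ bound and the $|t|^{-1/2}$ dispersive bound into the abstract Keel--Tao theorem with $\sigma=\tfrac12$, for which no endpoint issues arise. Your cautionary point about needing the half-wave bound $\| e^{\pm it\sqrt H}P_{ac}(H)\|_{L^1\to L^\infty}\lesssim |t|^{-1/2}$, rather than only the stated cosine and $\tfrac{\sin(t\sqrt H)}{t\sqrt H}$ bounds, is well taken and is exactly what the paper's Theorems \ref{main_theorem_low} and \ref{main_theorem_high} establish directly for $e^{-it\sqrt H}P_{ac}(H)\chi_j(H)$, $j=1,2$.
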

\begin{theorem}
		Let $H = \Delta^2 + V$ satisfy the conditions specified in Theorem \ref{main_theorem} and  let $(q_1,r_1), (\widetilde{q}_1,\widetilde{r}_1)$ be sharp $\frac{1}{4}$-admissible and $(q_2,r_2), (\widetilde{q}_2,\widetilde{r}_2)$ be sharp $\frac{1}{2}$-admissible. Denote $\Psi_j(H):=P_{ac}(H)\chi_j(H), j=1,2$, then for $m \neq 0$, we have
		\begin{itemize}
			\item[(i)] The homogeneous Strichartz estimate
			$$
			\left\| e^{\pm it\sqrt{H+m^2}}\Psi_j(H) f\right\|_{L_t^{q_j} L_x^{r_j}} \lesssim_m \left\| f\right\|_{L^2},
			$$
			which gives that
			$$
			\begin{aligned}
				\left\|\Psi_j(H)u_m\right \|_{L_t^{q_j} L_x^{r_j}}
				\lesssim_m \left\| f\right\|_{L^2}+\left\| g\right\|_{L^2};
			\end{aligned}
			$$
			\item[(ii)] The dual homogeneous Strichartz estimate
			$$
			\begin{aligned}
				& \left\|\int_{\mathbb{R}} e^{\pm it\sqrt{H+m^2}}\Psi_j(H)F(s,x) ds\right\|_{L_x^2} \lesssim_m \left\| F\right\|_{L_t^{{q}_j^\prime} L_x^{{r}_j^\prime}};
			\end{aligned}
			$$
			\item[(iii)] The retarded Strichartz estimate
			$$
			\left\| \int_{s<t}e^{\pm i(t-s)\sqrt{H+m^2}}\Psi_j(H)F(s,x) ds\right\|_{L_t^{q_j} L_x^{r_j}} \lesssim_m \left\| F\right\|_{L_t^{\widetilde{q}_j^\prime} L_x^{\widetilde{r}_j^\prime}}.
			$$
		\end{itemize}
	\end{theorem}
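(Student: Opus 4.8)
The plan is to derive all three estimates from the abstract Strichartz machinery of Keel and Tao \cite{Kee98}, applied separately to the low-energy piece ($j=1$) and the high-energy piece ($j=2$). Fixing $j\in\{1,2\}$ and a sign, I set $U(t):=e^{\pm it\sqrt{H+m^2}}\Psi_j(H)$. The abstract theorem requires two inputs: the \emph{energy estimate} $\|U(t)\|_{L^2\to L^2}\lesssim 1$ uniformly in $t$, and the \emph{(untruncated) dispersive estimate} $\|U(t)U(s)^*\|_{L^1\to L^\infty}\lesssim_m |t-s|^{-\sigma_j}$ with $\sigma_1=\tfrac14$ and $\sigma_2=\tfrac12$. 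Once these are in place, Keel--Tao yields exactly the homogeneous, dual homogeneous, and retarded bounds (i)--(iii) for every sharp $\sigma_j$-admissible pair (no endpoint subtlety arises since $\sigma_j<1$, so the excluded case $(2,\infty,1)$ is irrelevant).

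First I would check the two inputs. Since $H\ge 0$ on the range of $P_{ac}(H)$, the operator $\sqrt{H+m^2}$ is nonnegative self-adjoint there and $e^{\pm it\sqrt{H+m^2}}$ is unitary on $P_{ac}(H)L^2$ by the spectral theorem; as $\Psi_j(H)=P_{ac}(H)\chi_j(H)$ is bounded on $L^2$, the energy estimate holds with a constant independent of $t$ and $m$. For the dispersive estimate, $\Psi_j(H)$ is self-adjoint and commutes with $e^{\pm it\sqrt{H+m^2}}$, so
$$
U(t)U(s)^*=e^{\pm i(t-s)\sqrt{H+m^2}}\,\Psi_j(H)^2,\qquad \Psi_j(H)^2=P_{ac}(H)\chi_j(H)^2 .
$$
Since $\chi_j^2$ is a cut-off of the same class as $\chi_j$, it suffices to prove $\big\|e^{\pm it\sqrt{H+m^2}}\Psi_j(H)\big\|_{L^1\to L^\infty}\lesssim_m |t|^{-\sigma_j}$. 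This is precisely the content underlying Theorem \ref{main_theorem}: there the bounds for $\cos(t\sqrt{H+m^2})$ and $\tfrac{\sin(t\sqrt{H+m^2})}{\sqrt{H+m^2}}$ are obtained via Stone's formula and oscillatory integrals with phase $e^{\pm it\sqrt{\lambda^4+m^2}}$, and $\cos$, $\sin$ are the real and imaginary parts of $e^{\pm it\sqrt{H+m^2}}$. The decay is governed solely by the stationary phase of $\sqrt{\lambda^4+m^2}$, giving $|t|^{-1/4}$ on $\mathrm{supp}\,\chi_1$ (where $\sqrt{\lambda^4+m^2}\approx m+\tfrac{\lambda^4}{2m}$) and $|t|^{-1/2}$ on $\mathrm{supp}\,\chi_2$ (where $\sqrt{\lambda^4+m^2}\approx\lambda^2$), so the half-wave propagator inherits the rate $\sigma_j$ with an $m$-dependent constant.

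With both inputs established, applying the Keel--Tao theorem to $U(t)$ with $\sigma=\sigma_j$ directly produces (i)--(iii) for the sharp $\sigma_j$-admissible pairs $(q_j,r_j)$ and $(\widetilde q_j,\widetilde r_j)$, using $\sigma_1=\tfrac14$ for $j=1$ and $\sigma_2=\tfrac12$ for $j=2$. To pass from (i) to the solution bound I would write $\Psi_j(H)u_m=\cos(t\sqrt{H+m^2})\Psi_j(H)f+\tfrac{\sin(t\sqrt{H+m^2})}{\sqrt{H+m^2}}\Psi_j(H)g$ and re-express each term through $e^{\pm it\sqrt{H+m^2}}$. The cosine term is controlled by $\|f\|_{L^2}$ from (i). For the sine term, on the range of $P_{ac}(H)$ one has $H+m^2\ge m^2>0$, so $(H+m^2)^{-1/2}\Psi_j(H)$ is bounded on $L^2$ with norm $\lesssim |m|^{-1}$; applying (i) with input $(H+m^2)^{-1/2}g$ gives the bound $\lesssim_m\|g\|_{L^2}$. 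Combining yields $\|\Psi_j(H)u_m\|_{L_t^{q_j}L_x^{r_j}}\lesssim_m\|f\|_{L^2}+\|g\|_{L^2}$.

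The main obstacle is the dispersive input rather than the abstract machinery. Because $\sqrt{H+m^2}\,\Psi_2(H)$ is unbounded on $L^1$, the high-energy sine estimate cannot be recovered from the $\tfrac{\sin}{\sqrt{H+m^2}}$ bound by multiplication; hence the $|t|^{-\sigma_j}$ decay for the half-wave propagator $e^{\pm it\sqrt{H+m^2}}\Psi_j(H)$ must be extracted directly from the oscillatory-integral analysis behind Theorem \ref{main_theorem}, not merely quoted from its statement.
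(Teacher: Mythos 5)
Your proposal is correct and follows the same route the paper intends: the paper states this theorem as a direct consequence of the abstract Keel--Tao machinery combined with the dispersive bounds, and your verification of the two inputs (the $L^2$ energy bound via unitarity of $e^{\pm it\sqrt{H+m^2}}$ on $P_{ac}(H)L^2$, and the $L^1\to L^\infty$ bound $\lesssim_m|t|^{-\sigma_j}$ for the half-wave propagator) is exactly what is needed. Your closing observation is also on target and consistent with the paper's structure: the required $\ell=0$ (half-wave) dispersive estimate cannot be deduced from the stated $\cos$ and $\sin/\sqrt{H+m^2}$ bounds alone, but it is precisely what Propositions \ref{free_case_m}, \ref{prop_K}, \ref{prop_Gamma}, \ref{high_1} and \ref{high_2} establish, since they treat $(H+m^2)^{\ell/2}e^{-it\sqrt{H+m^2}}$ for all $\ell\le 0$, including $\ell=0$.
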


	\subsection{Further remarks and backgrounds}
	Here, we make further comments, particularly regarding the spectral assumptions, and highlight some known results on higher-order Schr\" odinger operators and second-order wave equations with real-valued decaying potentials.
	\subsubsection{Zero resonance and zero eigenvalue}
	We first present two simple examples of potentials $V $ such that $ H $ has a zero resonance. Let $ \phi_1 \in C^\infty(\mathbb{R}) $ be a positive function defined by $ \phi_1(x) = c|x| + d $ for $ |x| > 1 $, where $ c, d \geq 0 $ and $(c, d) \neq (0, 0) $. Then $ H \phi_1 = 0 $ if we take
	$V(x) = -\frac{\Delta^2 \phi_1}{\phi_1}$ for $ x \in \mathbb{R}$.
	In this case, $ V \in C_0^\infty(\mathbb{R}) $, and $ \phi_1 \in W_{3/2}(\mathbb{R}) \setminus W_{1/2}(\mathbb{R}) $ if $ c > 0 $, while $ \phi_1 \in W_{1/2}(\mathbb{R}) $ if $c = 0 $. These examples demonstrate that zero resonances may occur even for compactly supported potentials.
	
	We now turn to the case of zero eigenvalues for $H $. It is similarly straightforward to construct an example of \( H \) having a zero eigenvalue if $ V $ decays sufficiently slowly. For instance, let $\phi = (1 + |x|^2)^{-s/2} $ and define $ V(x) = -\frac{\Delta^2 \phi}{\phi} $. Then $\phi \in H^4(\mathbb{R}) $ for any $ s > 1 $, and $(\Delta^2 + V) \phi = 0 $, implying that $|V(x)| \lesssim \langle x \rangle^{-4} $ and zero is an eigenvalue of $ H $. However, if $ |V(x)| \lesssim \langle x \rangle^{-\mu} $ for some $ \mu $ satisfying \eqref{condition}, then zero cannot be an eigenvalue of $ H $ in one dimension. We conjecture that this decay condition on $V $ may not be sharp, and we expect that the optimal decay rate to ensure the absence of zero eigenvalues for $ \Delta^2 + V $ on $ \mathbb{R} $ is $ \mu > 4 $.
	Based on these observations, and considering the polynomial decay conditions imposed on the   potential $V$ in our theorems, we remark that zero eigenvalue can be actually excluded, while zero resonances must be taken into account. However, we again emphasize that the presence of zero resonances does not affect the validity of the time decay estimates derived for the solution of the beam equation  \eqref{beam_equation}.
	\subsubsection{Embedded positive eigenvalue}\label{eigenvalue}
	According to Kato's theorem \cite{Kat59}, if
	$V$ is bounded and satisfies $V=o(|x|^{-1})$ as $|x|\rightarrow\infty$,  then the Schr\"odinger operator $-\Delta+V$ has no positive eigenvalues  (also see \cite{FHHH82, IJ03, KT06} for more related results and references). However, such a criterion does not extend to the fourth-order Schr\"odinger  operator $H=\Delta^2+V$. Indeed, it is easy to construct a Schwartz function $V(x)$ so that $H$ on $\R$ has an eigenvalue $1$.\footnote{In fact, $V(x)=20/\cosh^2(x)-24/\cosh^4(x)\in \mathcal{S}(\R)$ satisfies $\frac{d^4\psi_0}{dx^4}+ V(x)\psi_0=\psi_0$ where $\psi_0=1/\cosh(x)=2/(e^x+e^{-x})\in L^2(\mathbb{R})$.}
	Furthermore, in any dimensions $n\ge1$, one can construct
	$V\in C_0^\infty(\R^n)$ such that $H$ has positive eigenvalues (see Feng et al.\cite[Section 7.1]{FSWY20}).
	These results demonstrate that the absence of positive eigenvalues for the fourth-order Schr\"odinger operator is more subtle and less stable than for the second-order case under perturbations of the potential  $V$.
	
	Nevertheless, it is worth noting that if $V\in C^1(\R)\cap L^\infty(\R)$ is repulsive, {\it i.e.}, $xV'(x)\le0$, then $H$ has no eigenvalues (see \cite[Theorem 1.11]{FSWY20}). This criterion is also applicable to general higher-order elliptic operators $P(D)+V$ in any dimensions $n\ge1$.
	
	\subsubsection{Some results of higher-order Schr\"odinger operators}
	
	Recently, substantial progress has been made in the study of time decay estimates for the operator $ e^{-itH} $ generated by the fourth-order Schr\"odinger operator $ H = \Delta^2 + V $, where $ V $ is a real-valued decaying potential. Feng-Soffer-Yao \cite{FSY} first established that  Kato-Jensen decay estimate for $e^{-itH} $ is bounded by $ (1+|t|)^{-n/4} $ for dimensions $ n \geq 5 $ in the regular case. Subsequently, Erdo\u{g}an-Green-Toprak \cite{EGT19} for $ n=3$, and Green and Toprak \cite{GT19} for $ n=4 $, demonstrated that the $ L^1$-$L^\infty$ estimates for $e^{-itH}$ are $ O(|t|^{-n/4}) $ for $n = 3, 4 $ when zero is a regular point or a first-kind resonance. They also showed that the decay rate changes when other types of zero-energy resonances occur.
	
	More recently, Soffer-Wu-Yao \cite{SWY22} proved that the $ L^1$-$L^\infty$ estimates for $ e^{-itH}$ are $ O(|t|^{-1/4})$ for $ n = 1 $, irrespective of whether zero is a regular point or a resonance. It is worth emphasizing that, in dimension $n=1$, the presence of zero resonances does not affect the optimal decay rate of $e^{-itH}$, though it does require a faster decay rate of the potential.  Furthermore, Li-Soffer-Yao \cite{LSY21} investigated the $ L^1$-$L^\infty $ estimates for $e^{-itH}$ in dimension $n = 2 $ for all types of zero resonances and established an optimal decay rate of $ O(|t|^{-1/2})$ in the regular and first-kind resonance cases.
	Additionally, for the higher-order Schr\"odinger operator $ H = (-\Delta)^m + V $, where with $m > 1 $,  $2m < n < 4m $, and $ V $ is a real-valued potential belonging to the closure of $C_0 $ functions with respect to the generalized Kato norm (which has critical scaling), Erdo\u{g}an-Goldberg-Green \cite{EGG_ArXiv} established that the $ L^1$-$L^\infty $ estimates for $e^{-itH} $ are $ O(|t|^{-\frac{n}{2m}}) $ in the regular case.
	
	Besides, some significant advances have been made in the study of $L^p $ bounds for  wave operators associated with the higher-order Schr\"odinger operator $ H = (-\Delta)^m + V(x)$ on $\mathbb{R}^n $ with $ m > 1 $. Notably, results can be found in \cite{GG21b, MWY22, MWY23, MWY_ArXiv23_2} for $ n < 2m $ with $m = 2 $, in \cite{Yajima_2024JST} for $ n = 2m $ with $ m = 2 $, and in \cite{EG21, Erdogan-Green23, EGG23, EGL} for $n > 2m$ with $m \in \mathbb{N}^+ $.

	\subsubsection{Classical wave equations with potentials}
	
	For the following classical wave equation with a real-valued decaying potential, the decay rate of the solution has been extensively studied:
	\begin{equation}\label{waveequation-twoorder}
		\left\{\begin{array}{l}
			\partial_t^2	u + (-\Delta + V) u = 0, \quad (t,x) \in \mathbb{R} \times \mathbb{R}^n, \\
			u(0,x) = f(x), \quad \partial_tu(0,x) = g(x).
		\end{array}\right.
	\end{equation}
	In the free case (i.e., $V = 0$), it is well known that the $W^{k+1,1} \to L^\infty$ estimate of the solution operator $\cos(t\sqrt{-\Delta})$ is $O(|t|^{-1/2})$, and the $W^{k,1} \to L^\infty$ estimate of the solution operator
	$
	\frac{\sin(t\sqrt{-\Delta})}{\sqrt{-\Delta}}
	$
	is $O(|t|^{-1/2})$ for $k > \frac{1}{2}$ in dimension two. In general, one needs to use Hardy, Besov, or BMO spaces to obtain the sharp $k = \frac{n-1}{2}$ smoothness index in even dimensions; see, e.g., \cite{MSW}. Such decay bounds can also be achieved in Sobolev spaces in odd dimensions (cf. \cite{Str05}).
	
	When $V \neq 0$, Beals and Strauss \cite{B-S} first studied the $L^\infty$ decay estimates of solution operators to equation \eqref{waveequation-twoorder} in dimensions $n \geq 3$ (see also \cite{Beals, DP, GV03}). There is less work on the $W^{k,1} \to L^\infty$ dispersive estimates or \textquotedblleft{regularized}" $L^1 \to L^\infty$-type estimates for $n = 2$, where negative powers of $-\Delta + V$ are employed. In dimension two, Moulin \cite{Mou} studied the high-frequency estimates of this type. Kopylova \cite{Kop} studied local estimates and obtained a decay rate of $t^{-1}(\log t)^{-2}$ for large $t$ when zero is a regular point. Green \cite{Green14} also studied the $L^1 \to L^\infty$ dispersive estimates of the solution operator for the wave equation with a potential in dimension two, improving the time decay rate of solution operators in weighted spaces if zero is a regular point of the spectrum of $H$.
	
	Additionally, the other dimensions have also been studied. For instance, see Cardoso and Vodev \cite{CV} for dimensions $4 \leq n \leq 7$. These results generally assume that zero is regular. Erdo\u{g}an-Goldberg-Green \cite{EGG14} established low-energy $L^1 \to L^\infty$ bounds for the wave equation with a potential in four spatial dimensions and demonstrated that the loss of derivatives in the initial data for the wave equation is a high-energy phenomenon. See also \cite{Bec-Gold,  GV95, BPSZ03, BPSZ04, GV03, DP} for further decay bounds and Strichartz estimates for the wave equation with a potential, particularly in dimensions $n \geq 3$.

	\subsection{The outline of the proof}
	

	We provide a concise explanation of the key ideas underlying the proof of the main theorem. We focus exclusively on the case where zero is a regular point of \( H \), as the arguments for other cases are analogous.
	
	The starting point involves the following Stone's formulas:
	\begin{equation}\label{stoneformula-cos}
		\begin{split}
			\cos (t \sqrt{H+m^2}) P_{ac}(H) =\frac{2}{\pi i} \int_0^{\infty} \lambda^3 \cos (t \sqrt{\lambda^4+m^2})\left[R_V^{+}(\lambda^4)-R_V^{-}(\lambda^4)\right]  d \lambda,
		\end{split}
	\end{equation}
	\begin{equation}\label{stoneformula-sin}
		\begin{split}
			\frac{\sin (t \sqrt{H+m^2})}{\sqrt{H+m^2}} P_{ac}(H) =\frac{2}{\pi i} \int_0^{\infty} \frac{\lambda^3}{\sqrt{\lambda^4+m^2}} \sin (t \sqrt{\lambda^4+m^2})\left[R_V^{+}(\lambda^4)-R_V^{-}(\lambda^4)\right] d \lambda,
		\end{split}
	\end{equation}
	where $R_V^{\pm}(\lambda^4)=\left(H-\lambda^4 \mp i 0\right)^{-1}$ are the boundary operators of resolvent of $H$. Hence we need to study the expansions of the resolvent operators $R_V^{ \pm}(\lambda^4)$ by using perturbations of the following free resolvent $R_0(z)$(see, e.g., \cite{FSY}):
	\begin{equation}\label{R_0z}
		R_0(z):=\left((-\Delta)^2-z\right)^{-1}=\frac{1}{2 z^{\frac{1}{2}}}\left(R(-\Delta ; z^{\frac{1}{2}})-R(-\Delta ;-z^{\frac{1}{2}})\right), z \in \mathbb{C} \backslash[0,\infty).
	\end{equation}
	Here the resolvent $R(-\Delta; z^{\frac{1}{2}}):=\left(-\Delta-z^{\frac{1}{2}}\right)^{-1}$ with $\Im z^{\frac{1}{2}}>0$. For $\lambda \in \mathbb{R}^{+}$, we define the limiting resolvent operators by
	$$
	\begin{aligned}
		& R_0^{\pm}(\lambda):=R_0^{\pm}(\lambda \pm i 0)=\lim_{\epsilon \rightarrow 0^+}\left(\Delta^2-(\lambda \pm i \epsilon)\right)^{-1}, \\
		& R_V^{\pm}(\lambda):=R_V^{\pm}(\lambda \pm i 0)=\lim_{\epsilon \rightarrow 0^+}(H-(\lambda \pm i \epsilon))^{-1}.
	\end{aligned}
	$$
	It was well-known, by the limiting absorption principle (see, e.g., \cite{Agmon}), that $R^\pm(-\Delta; \lambda^2)$ are well-defined as bounded operators in $ B(L_s^2, L_{-s}^2)$ for any $s > \frac{1}{2}$. Consequently, $R_0^\pm(\lambda^4)$ are also well-defined on weighted spaces. This property extends to $R_V^\pm(\lambda^4)$ for $ \lambda > 0$, assuming certain decay conditions on the potential (see, e.g., \cite{FSY}).
	So by the identity \eqref{R_0z}, we obtain the following kernel of $R_0^{\pm}\left(\lambda^4\right)$ for each $\lambda>0$:
	\begin{align}
		\label{free_resolvent}
		R_0^\pm(\lambda^4)(x,y)
		=\frac{F_\pm(\lambda |x-y|)}{4\lambda^3}=\frac{F_\pm(\lambda |x|)}{4\lambda^3}-\frac{y}{4\lambda^2}\int_0^1\sgn(x-\theta y)F_\pm'(\lambda|x-\theta y|)d\theta,
	\end{align}
	where $F_\pm(s):=\pm ie^{\pm is}-e^{-s}$ and we have used the Taylor expansion  in the second equality. In particular, $R_0^\pm(\lambda^4)=O(\lambda^{-3})$ at the level of the order of $\lambda$.
	
	Note that
	$$
	\cos (t \sqrt{H+m^2})=\frac{e^{i t \sqrt{H+m^2}}+e^{-i t \sqrt{H+m^2}}}{2}\ \ \text{and}\ \ \frac{\sin (t \sqrt{H+m^2})}{\sqrt{H+m^2}}=\frac{e^{i t \sqrt{H+m^2}}-e^{-i t \sqrt{H+m^2}}}{2i\sqrt{H+m^2}}.
	$$
	Then, we deal with  \eqref{stoneformula-sin} and \eqref{stoneformula-cos} for $ m \neq 0 $ by focusing on $ \left(H + m^2\right)^{\frac{\ell}{2}} e^{-i t \sqrt{H + m^2}} $ taking $\ell = -1, 0 $, respectively. This approach is based on the following Stone's formula:
	\begin{equation}\label{stone-H-alpha}
		\begin{split}
			\left(H + m^2\right)^{\frac{\ell}{2}} e^{-i t \sqrt{H + m^2}} P_{ac}(H)
			& = \frac{2}{\pi i} \int_0^{\infty} e^{-it \sqrt{\lambda^4 + m^2}} \lambda^3 \left(\lambda^4 + m^2\right)^{\frac{\ell}{2}} \\
			& \quad \times \big[R_V^+(\lambda^4) - R_V^-(\lambda^4)\big](x,y) \, d\lambda.
		\end{split}
	\end{equation}
	For $ m = 0 $, \eqref{stoneformula-cos} similarly reduces to \eqref{stone-H-alpha} with $\ell = 0 $. However, the singularity in \eqref{stoneformula-sin} for $ m = 0 $ introduces additional complexities, making the straightforward application of \eqref{stone-H-alpha} to \eqref{stoneformula-sin} infeasible. In this case, we instead use the relationship:
	$$
	\frac{\sin (t \sqrt{H})}{\sqrt{H}} P_{ac}(H) = \frac{1}{2} \int_{-t}^{t} \cos(s \sqrt{H}) P_{ac}(H) \, ds.$$
	Therefore, the main focus for establishing  the main result is still  the precise treatment of the Stone's formula \eqref{stone-H-alpha}.
	We further decompose \eqref{stone-H-alpha} into the low energy $\{0 \leq \lambda \ll 1\}$ and the high energy $\{\lambda \gg 1\}$ two parts.
	For the high energy part, we will use the following resolvent identity:
	$$
		R_V^{\pm}(\lambda^4)=R_0^{\pm}(\lambda^4)-R_0^{ \pm}(\lambda^4)VR_0^{\pm}(\lambda^4)+R_0^{ \pm}(\lambda^4)VR_V^{\pm}(\lambda^4)VR_0^{ \pm}(\lambda^4).
	$$
	The analysis of the high energy part is comparatively straightforward, as the free resolvent $ R_0^{\pm}(\lambda^4)$  remains nonsingular for $ \lambda \geq 1 $. Consequently, we restrict our attention to the low energy part, which is given by:
	$$
		\frac{2}{\pi i} \int_0^{\infty} e^{-it \sqrt{\lambda^4 + m^2}} \lambda^3 \left(\lambda^4 + m^2\right)^{\frac{\ell}{2}} \widetilde{\chi}_1(\lambda)
		\big[R_V^{+}(\lambda^4) - R_V^{-}(\lambda^4)\big](x, y) \, d\lambda,
	$$
	where $ \widetilde{\chi}_1(\lambda):= \chi_1(\lambda^4)$ for $ \lambda > 0 $, and $ \chi_1 $  is a smooth cut-off function supported near zero.
	
	To address the low-energy contribution, it is important to derive the asymptotic expansions of the resolvent operators $ R_V^{\pm}(\lambda^4) $ near zero.
	
	Defining $ v(x) = \sqrt{|V(x)|} $, $ U(x) = \operatorname{sgn}(V(x)) $ and $ M^\pm(\lambda) = U + vR_0^\pm(\lambda^4)v $, we arrive at the following symmetric resolvent identity:
	\begin{equation}\label{id-RV}
		R_V^\pm(\lambda^4) = R_0^\pm(\lambda^4) - R_0^\pm(\lambda^4) v \big(M^\pm(\lambda)\big)^{-1} v R_0^\pm(\lambda^4).
	\end{equation}
	The asymptotic expansion of $ \big(M^\pm(\lambda)\big)^{-1} $ as $ \lambda \to 0^+ $ has been recently derived in \cite{SWY22}. In the regular case, this expansion is expressed as:
	\begin{equation}\label{M}
		\big(M^\pm(\lambda)\big)^{-1} = Q_2 A_0^0 Q_2 + \lambda Q_1 A_1^0 Q_1 + \lambda^2 \big(Q_1 A_{21}^0 Q_1 + Q_2 A_{22}^0 + A_{23}^0 Q_2\big) + \Gamma_3^0(\lambda),
	\end{equation}
	where $ A_k^0, A_{kj}^0, Q_\alpha \in \mathbb{AB}(L^2) $,  $\|\Gamma_3^0(\lambda)\|_{\mathbb{B}(L^2)} = O(\lambda^3)$, and the operators $Q_\alpha $ satisfy:
	\begin{align}\label{idea_4}
		Q_\alpha(x^k v) \equiv 0, \quad \langle x^k v, Q_\alpha f \rangle = 0,
	\end{align}
	for any $ f \in L^2$ and any integer $0 \leq k \leq \alpha - 1 $. The significance of the properties in \eqref{idea_4}
	is that, combined with  the Taylor expansion \eqref{free_resolvent} of the free resolvent, one has
	$$
		Q_\alpha v R_0^\pm(\lambda^4) = O(\lambda^{-3 + \alpha}), \quad R_0^\pm(\lambda^4) v Q_\alpha = O(\lambda^{-3 + \alpha}),
	$$
	which are less singular in $ \lambda \in (0, 1] $ compared with the free resolvent $ R_0^\pm(\lambda^4) = O(\lambda^{-3})$.

	Combining \eqref{id-RV} and \eqref{M}, we find that \eqref{stone-H-alpha} can be expressed as the sum of the free term (i.e., $V = 0 $) and six integral operators with integral kernels of the form:
	\begin{align*}
		\int_0^\infty e^{-i t \sqrt{\lambda^4 + m^2}} \lambda^{6 - \alpha - \beta} \lambda^\gamma (\lambda^4 + m^2)^{\frac{\ell}{2}}
		\widetilde{\chi}_1(\lambda) \left[R_0^\pm(\lambda^4) v Q_\alpha B Q_\beta v R_0^\pm(\lambda^4)\right](x, y) \, d\lambda,
	\end{align*}
	where $ B \in \mathbb{B}(L^2) $ varies between terms, and we set \( Q_0 = \Id \). Furthermore:
	$$
	Q_\alpha B Q_\beta \in \left\{Q_2 A_0^0 Q_2, \, Q_1 A_1^0 Q_1, \, Q_1 A_{21}^0 Q_1, \, Q_2 A_{22}^0, \, A_{23}^0 Q_2, \, \Gamma_3^0(\lambda)\right\}.
	$$
	Here, $\gamma = 0$ for $Q_\alpha B Q_\beta = Q_1 A_1^0 Q_1$  and $ \gamma = 1 $ in all other cases.
	
	Using Littlewood-Paley decomposition and Lemma \ref{estimate-integral}, these integrals can be dominated by  the following form:
	\begin{equation*}
		\sum_{N = -\infty}^{N'} 2^N\Big| \int_0^\infty e^{-i t \sqrt{2^{4N}s^4 + m^2}} e^{\pm i 2^N s \Psi(z)} s^h \varphi_0(s) \Phi\left(2^N s, m, \ell, z\right) \, ds\Big|,
	\end{equation*}
	for some fixed  $ N' \in \mathbb{Z}$. We verify that the oscillatory integral above satisfies the conditions of Lemma \ref{estimate-integral}. By applying Lemma \ref{estimate-integral} and summing over \( N \), we obtain the desired results.
	
	The paper is organized as follows. In Section \ref{sec:reso-expan}, we derive the dispersive bounds in the free case.  Section \ref{sec:low-energy} begins with a review of  the resolvent expansions near  $\lambda=0$.
	Using Stone's formula, Littlewood-Paley decomposition and oscillatory integrals, we then establish the low-energy decay estimates stated in  Theorem \ref{main_theorem}. Finally, in Section \ref{sec:high-energy}, we present the proof of Theorem \ref{main_theorem} in the high energy part.
	\bigskip


\section{The decay estimates for the free case}\label{sec:reso-expan}
In this section, we focus on deriving the decay bounds for the free case using Littlewood-Paley decomposition and oscillatory integral theory.
We first give a lemma that is crucial for estimating the integrals discussed in this paper.
\begin{lemma}\label{estimate-integral}
Let $A$ and $B$ be some subsets of $\mathbb{R}$. Suppose that $\Phi(s,m,\ell, z)$ is a function on $\mathbb{R} \times A \times B \times \mathbb{R}^n$ which is smooth in the first variable s, and satisfies for $k=0,1$,
$$
\left|\partial_s^k \Phi\left(2^N s, m, \ell,z\right)\right| \lesssim 1, \quad (s,m,\ell,z) \in[\frac{1}{4},1] \times A\times B \times \mathbb{R}^n,~ N \in \mathbb{Z}.
$$
Suppose that $\varphi_0(s)$ is a smooth function on $\mathbb{R}$ defined in \eqref{varphi_0}, $\Psi(z)$ is a nonnegative function on $\mathbb{R}^n$. For $h \in \mathbb{R}$ and $t \neq 0$, let $N_0=\left\lfloor \log_2 \frac{\Psi(z)}{|t|}\right\rfloor$ and
$$
\mathcal{K}_{N}^\pm(m,\ell,t,z):=\int_0^{\infty} e^{-i t\sqrt{ 2^{4 N}s^4+m^2}} e^{ \pm i 2^N s \Psi(z)} s^h \varphi_0(s) \Phi\left(2^N s, m,\ell, z\right) d s.
$$
Then the following estimates hold:
\begin{itemize}
\item[(i)] If $m=0$,
$$
\left|\mathcal{K}_{N}^\pm(0,\ell,t,z)\right|
\lesssim \begin{cases}\left(1+|t| 2^{2 N}\right)^{-\frac{1}{2}}, &\text{if } \left|N-N_0\right| \leq 2, \\ \left(1+|t| 2^{2 N}\right)^{-1}, & \text{if } \left|N-N_0\right|>2 .\end{cases}
$$
\item[(ii)]If $m \neq 0$ and fix $N^{\prime\prime} \in \mathbb{Z}$, then for $N \geq N^{\prime\prime}$,
$$
\begin{aligned}
\left|\mathcal{K}_{N}^\pm(m,\ell,t,z)\right|
\lesssim \begin{cases}\left\langle m\right\rangle^{\frac{1}{2}}\left(1+|t| 2^{2 N}\right)^{-\frac{1}{2}},&\text{if } \left|N-N_0\right| \leq C_{m}, \\ \left\langle m\right\rangle \left(1+|t| 2^{2 N}\right)^{-1},&\text{if } \left|N-N_0\right|>C_{m} ,\end{cases}
\end{aligned}
$$
where $C_{m}=2+\frac{1}{2}\log_2\left(1+2^{-4(N^{\prime\prime}-2)}m^2\right)$.
\vskip 0.3cm
\item[(iii)] If $m \neq 0$ and fix $N^{\prime}\in \mathbb{Z}$, then for $N \leq N^{\prime}$,
$$
\left|\mathcal{K}_{N}^\pm(m,\ell,t,z)\right|
\lesssim \left\langle m\right\rangle^{\frac{1}{2}}\left(1+|t| 2^{4 N}\right)^{-\frac{1}{2}}.
$$
\end{itemize}
\end{lemma}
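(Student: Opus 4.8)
The plan is to read $\mathcal{K}_N^\pm$ as an oscillatory integral on the compact interval $[\tfrac14,1]$ with phase
\[
\phi(s)=-t\sqrt{2^{4N}s^4+m^2}\pm 2^N s\,\Psi(z)
\]
and amplitude $\psi(s)=s^h\varphi_0(s)\Phi(2^Ns,m,\ell,z)$. The hypotheses on $\Phi$ together with $\varphi_0\in C_c^\infty([\tfrac14,1])$ give $\|\psi\|_\infty+\|\psi'\|_{L^1}\lesssim 1$ uniformly in $N,m,\ell,z$, so every amplitude factor below is harmless, and the trivial bound $|\mathcal{K}_N^\pm|\lesssim\int_{1/4}^1|\psi|\lesssim1$ supplies the ``$1+$'' in each estimate. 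The entire lemma then reduces to two classical devices: the van der Corput second–derivative estimate, which yields $|\mathcal{K}_N^\pm|\lesssim\lambda^{-1/2}$ as soon as $|\phi''|\ge\lambda$ throughout $[\tfrac14,1]$ (no stationary point need be located, since $k=2$ requires only a pointwise lower bound), and a single integration by parts, which yields $|\mathcal{K}_N^\pm|\lesssim\int_{1/4}^1\big(|\phi'|^{-1}|\psi'|+|\phi''|\,|\phi'|^{-2}|\psi|\big)\,ds$ whenever $\phi'$ has no zero on $[\tfrac14,1]$.

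First I would record the derivatives. Writing $g(s)=2\cdot2^{4N}s^3(2^{4N}s^4+m^2)^{-1/2}$, one has $\phi'(s)=-t\,g(s)\pm2^N\Psi(z)$ and
\[
\phi''(s)=-2t\,2^{4N}\,\frac{s^2\,(2^{4N}s^4+3m^2)}{(2^{4N}s^4+m^2)^{3/2}}.
\]
Since $2^{4N}s^4+m^2\le 2^{4N}s^4+3m^2\le 3(2^{4N}s^4+m^2)$, the quantity $|\phi''|$ is comparable to $|t|\,2^{4N}s^2(2^{4N}s^4+m^2)^{-1/2}$ on $[\tfrac14,1]$, and it is the weight $\sqrt{2^{4N}s^4+m^2}$ that records the transition between the two decay scales $(1+|t|2^{2N})$ and $(1+|t|2^{4N})$. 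This single two–sided bound drives every case. For $m=0$ the phase is exactly quadratic, $\phi''(s)\equiv-2t\,2^{2N}$, so van der Corput gives the universal $(|t|2^{2N})^{-1/2}$, which is the first alternative of (i); since here $g(s)=2\cdot2^{2N}s$ is comparable to $2^{2N}$ while $2^N\Psi(z)$ is comparable to $|t|2^{N+N_0}$ by definition of $N_0$, the condition $|N-N_0|>2$ forces one summand of $\phi'$ past twice the other, hence $|\phi'|\gtrsim|t|2^{2N}$ with no zero, and the integration by parts collapses to $(|t|2^{2N})^{-1}$.

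For $m\neq0$ I would run the same two-step scheme, bounding $\sqrt{2^{4N}s^4+m^2}$ according to the range of $N$. When $N\le N'$ it is $\lesssim_{N'}\langle m\rangle$, so $|\phi''|\gtrsim|t|2^{4N}/\langle m\rangle$ uniformly on $[\tfrac14,1]$; van der Corput alone then gives $\langle m\rangle^{1/2}(1+|t|2^{4N})^{-1/2}$, which is exactly (iii). When $N\ge N''$ it is $\lesssim_{N''}2^{2N}\langle m\rangle$, so $|\phi''|\gtrsim|t|2^{2N}/\langle m\rangle$, and van der Corput gives $\langle m\rangle^{1/2}(1+|t|2^{2N})^{-1/2}$, valid for all such $N$; this is the first alternative of (ii).

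The main obstacle is the sharp non-stationary bound in (ii), where both the power of $(1+|t|2^{2N})$ and the single power of $\langle m\rangle$ must come out correctly. Here I would use, for $N\ge N''$ and $s\in[\tfrac14,1]$, the lower bound $g(s)\gtrsim 2^{4N}(2^{2N}+|m|)^{-1}$ together with $\Psi(z)<2|t|2^{N_0}$, so that the ratio of the two summands of $\phi'$ is comparable to $2^{N-N_0}\big(1+2^{-2N''}|m|\big)^{-1}$; demanding that this exceed a fixed constant is precisely the condition $|N-N_0|>C_m$ with $C_m=2+\tfrac12\log_2\!\big(1+2^{-4(N''-2)}m^2\big)$, which then gives $|\phi'|\gtrsim|t|2^{4N}(2^{2N}+|m|)^{-1}$, i.e. $|\phi'|^{-1}\lesssim_{N''}\langle m\rangle\,(|t|2^{2N})^{-1}$ (the case where $2^N\Psi(z)$ dominates is handled symmetrically via $2^N\Psi(z)>2|t|g(s)$). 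The one real pitfall is the second integration-by-parts term: estimating $|\phi''|$ and $|\phi'|^{-2}$ separately yields a spurious $\langle m\rangle^2$, so instead I would bound the quotient as a whole, using $|\phi''|\,|\phi'|^{-2}\le 4|g'|(|t|g^2)^{-1}$ and the explicit identity $g'/g^2=(2^{4N}s^4+3m^2)\big(2\cdot2^{4N}s^4\sqrt{2^{4N}s^4+m^2}\big)^{-1}\lesssim_{N''}\langle m\rangle\,2^{-2N}$; this recovers the sharp single factor $\langle m\rangle$ and delivers $\langle m\rangle(1+|t|2^{2N})^{-1}$, completing (ii). Summing the resulting geometric series in $N$ in the applications is then immediate.
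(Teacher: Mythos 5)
Your proposal is correct and follows essentially the same path as the paper's proof: the trivial bound supplies the ``$1+$'', van der Corput's second-derivative lemma yields both $-\tfrac12$-power bounds (via $Q_m''\gtrsim \langle m\rangle^{-1}$ for $N\ge N''$ and $Q_m''\gtrsim (2^Ns)^2\langle m\rangle^{-1}$ for $N\le N'$), and a single integration by parts handles the non-stationary regime $|N-N_0|>C_m$, with $C_m$ playing exactly the same role of separating the two summands of the phase derivative. The minor deviations are harmless: you prove case (i) directly where the paper cites \cite{CLSY_ArXiv}; you control the delicate second integration-by-parts term through the explicit identity for $g'/g^2$, whereas the paper integrates $\phi''/(\phi')^2$ exactly using the fixed sign of $\phi''$; and your constant-tracking (much like the paper's own, whose contrapositive also slips by a unit or two) determines the threshold only up to an additive absolute constant in $C_m$, which is immaterial since the applications only use $C_m\lesssim 1+\log_2\langle m\rangle$.
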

Throughout this paper, $\Theta_{N_0, N}(m, t)$ always denotes the following function:
\begin{equation}\label{Theta}
\Theta_{N_0, N}(m, t):= \begin{cases}\left\langle m\right\rangle^{\frac{1}{2}}\left(1+|t| 2^{2 N}\right)^{-\frac{1}{2}}, &\text {if } \left|N-N_0\right| \leq C_{m}, \\ \left\langle m\right\rangle\left(1+|t| 2^{2 N}\right)^{-1}, &\text {if } \left|N-N_0\right|>C_{m},\end{cases}
\end{equation}
where $N_0=\left\lfloor \log _2 \frac{\Psi(z)}{|t|}\right\rfloor$, $C_{m}=2+\frac{1}{2}\log_2\left(1+2^{-4(N^{\prime\prime}-2)}m^2\right)$ and $\Psi(z)$ is a non-negative real value function on $\mathbb{R}^n$.
When $m=0$, we note that $C_0=2$.

Define $Q_m(s)=\sqrt{s^4+m^2}, s \in \mathbb{R}^+, m \neq 0$, then
$$
Q_m^{\prime}(s)=\frac{2s^3}{\left( s^4+m^2\right)^{\frac{1}{2}}}, \quad \quad Q_m^{\prime\prime}(s)=\frac{2s^6+6s^2 m^2}{\left( s^4+m^2\right)^{\frac{3}{2}}},
$$
and the following equivalence relations are used repeatedly in the proof:
\begin{itemize}
  \item[(i)] If $s \geq 1$, $Q_m^{\prime}(s) \gtrsim \frac{s}{\left\langle m\right\rangle}, ~  Q_m^{\prime\prime}(s) \gtrsim  \frac{1}{\left\langle m\right\rangle}$; \\
  \item[(ii)] If $s < 1$, $Q_m^{\prime}(s) \gtrsim \frac{s^3}{\left\langle m\right\rangle},
   ~ Q_m^{\prime\prime}(s) \gtrsim \frac{s^2}{\left\langle m\right\rangle} $.
\end{itemize}
\begin{proof}
The case $m=0$ can be followed from Lemma 2.1 in \cite{CLSY_ArXiv}. Henceforth,  we assume $m \neq 0$ and without loss of generality $t > 0$.
Let
$$U_\pm:=U_\pm\left(m,t,s,z,N\right)=t\sqrt{ 2^{4 N}s^4+m^2} \mp 2^N s \Psi(z),
$$
$$ W:=W\left(m,\ell,s,z,N\right)=s^h \varphi_0(s) \Phi\left(2^N s, m, \ell,z\right),$$
then
$$
\mathcal{K}_{N}^\pm(m,\ell,t,z)=\int_0^{\infty} e^{-iU_\pm\left(m,t,s,z,N\right)}W\left(m,\ell,s,z,N\right)ds.
$$
Since $\left|\Phi\left(2^N s, m,\ell, z\right)\right| \lesssim 1$ and $\operatorname{supp} \varphi_0(s) \subset [\frac{1}{4},1]$, then
\begin{equation}\label{bound}
\left|\mathcal{K}_{N}^\pm(m,\ell,t,z)\right| \lesssim \int_{\frac{1}{4}}^1\left|s^h \varphi_0(s)\Phi\left(2^N s, m,\ell, z\right) \right| d s \lesssim 1.
\end{equation}
\qquad \textbf{Case 1:} Let $N\geq N^{\prime\prime}$ for some fixed $N^{\prime\prime} \in \mathbb{Z}$. In this case,
$$Q_m^{\prime}(2^N s) \gtrsim \frac{2^N s}{\left\langle m\right\rangle},
~ Q_m^{\prime\prime}(2^N s) \gtrsim \frac{1}{\left\langle m \right\rangle}.
$$
We consider $\mathcal{K}_{N}^-(m,\ell,t,z)$ and $\mathcal{K}_{N}^{+}(m,\ell,t,z)$ case by case.

 We first deal with $\mathcal{K}_{N}^-(m,\ell,t,z)$. If $t 2^{2 N} < 1$, by \eqref{bound}, it is clear that $\left|\mathcal{K}_{N}^-(m,\ell,t,z)\right|$ is bounded by $\left(1+|t| 2^{2 N}\right)^{-1}$ uniformly in $m,\ell, z$. As for $t 2^{2 N} \geq 1$, since
$$
\begin{aligned}
&\partial_s U_{-}=t2^N Q_m^{\prime}(2^N s)+2^N \Psi(z) \geq t2^N Q_m^{\prime}(2^N s) \gtrsim t2^{2N}\left\langle m \right\rangle^{-1},\\
& \partial_s^2 U_{-}=t2^{2N} Q_m^{\prime\prime}(2^N s) \gtrsim \frac{t2^{2N}}{\left\langle m \right\rangle},
\end{aligned}
$$
then by integration by parts and noting that there are no boundary terms, one has
\begin{align*}
\left|\mathcal{K}_{N}^-(m,l,t,z)\right|&\lesssim \int_{\frac{1}{4}}^1 \left|\partial_s\left(\frac{ W}{\partial_s U_{-}}\right)\right|ds\nonumber\\
&\lesssim \int_{\frac{1}{4}}^1 \frac{1}{\partial_s U_{-}}ds + \int_{\frac{1}{4}}^1 \frac{\partial_s^2 U_{-}}{\left(\partial_s U_{-}\right)^2} ds \lesssim\sup_{s\in[\frac{1}{4}, 1]} \frac{1}{\partial_s U_{-}}
\lesssim \frac{\left\langle m\right\rangle}{1+|t|2^{2N}}.
\end{align*}
And by Van der Corput lemma (see, e.g., \cite[P. 334]{Stein}), we obtain that
$$
\left|\mathcal{K}_{N}^{-}(m,\ell,t,z)\right| \lesssim\left(\frac{t2^{2N}}{\left\langle m \right\rangle}\right)^{-\frac{1}{2}}\left(W(m,\ell, 1, z, N)+\int_{\frac{1}{4}}^1\left|\partial_s W(m, \ell,s, z, N)\right| d s\right).
$$
Noting that $W(m, \ell,1, z, N)=0$ and $\left|\partial_s W(m, \ell,s, z, N)\right| \lesssim 1$, we have
\begin{equation*}
\left|\mathcal{K}_{N}^{-}(m,\ell,t,z)\right| \lesssim\left(\frac{t2^{2N}}{\left\langle m \right\rangle}\right)^{-\frac{1}{2}} \lesssim\left(1+|t| 2^{2 N}\right)^{-\frac{1}{2}}\left\langle m \right\rangle^{\frac{1}{2}}.
\end{equation*}

Next we turn to the estimate of $\mathcal{K}_{N}^{+}(m,\ell,t,z)$. If $t 2^{2 N} < 1$, it is clear that $\left|\mathcal{K}_{N}^{+}(m,\ell,t,z)\right|$ is bounded by $\left(1+|t| 2^{2 N}\right)^{-1}$ uniformly in $m, \ell,z$. If $t 2^{2 N} \geq 1$, note that $$\partial_s U_{+}=t2^N Q_m^{\prime}(2^N s)-2^N \Psi(z),$$ hence $U_{+}\left(m,t,s,z,N\right)$ may have a critical point $s_0$ located at $\left[\frac{1}{4},1\right]$ and satisfing $$2^N \Psi(z)=t2^N Q_m^{\prime}(2^N s).$$\par
If $\left|N-N_0\right| \leq C_{m}$ with $C_{m}=2+\frac{1}{2}\log_2\left(1+2^{-4(N^{\prime\prime}-2)}m^2\right)$, set
$
A=2^{-4(N^{\prime\prime}-2)}m^2$, then
$$
-2-\log_2\sqrt{1+A}+N \leq \log_2 \frac{\Psi(z)}{t} \leq N+3+\log_2\sqrt{1+A}.
$$
Furthermore, $$\frac{2^{2N-2}t}{\sqrt{1+A}} \leq 2^N \Psi(z) \leq  2^{2N+3}t\sqrt{1+A}.$$
Note that
$$
t2^N Q_m^{\prime}(2^N s)=t2^N \frac{2(2^Ns)^3}{\left((2^Ns)^4+m^2\right)^{\frac{1}{2}}} \in
\left[\frac{2^{2N-2}t}{\sqrt{1+A}},2^{2N+3}t\sqrt{1+A}\right],
$$
then the critical point $s_0$ may exist. Since
$$
\left|\partial_s^2 U_{+}\right|=\left|t2^{2N} Q_m^{\prime\prime}(2^N s)\right| \gtrsim \frac{t2^{2N}}{\left\langle m \right\rangle},
$$
hence by Van der Corput lemma, we have
$$
\left|\mathcal{K}_{N}^{+}(m,\ell,t,z)\right| \lesssim\left(\frac{t2^{2N}}{\left\langle m \right\rangle}\right)^{-\frac{1}{2}} \lesssim\left(1+|t| 2^{2 N}\right)^{-\frac{1}{2}}\left\langle m \right\rangle^{\frac{1}{2}}.
$$

If $\left|N-N_0\right| > C_{m}$, then
$$
2^N \Psi(z) \leq \frac{2^{2N-2}t}{\sqrt{1+A}}\ \text{or}\  2^N \Psi(z) \geq 2^{2N+3}t\sqrt{1+A},
$$
 thus the critical point $s_0$ doesn't exist. Using integration by parts again, we obtain that $\left|\mathcal{K}_{N}^{+}(m, \ell,t, z)\right|$ is controlled by $\left(1+|t| 2^{2 N}\right)^{-1}\left\langle m \right\rangle$.

\textbf{Case 2:} Let $N \leq N^{\prime}$ for some fixed $N^{\prime} \in \mathbb{Z}$. In this case,
$$
 Q_m^{\prime\prime}(2^N s) \gtrsim \frac{(2^N s)^2}{\left\langle m \right\rangle}.
$$

If $t2^{4N} <1$, it is clear that $\left|\mathcal{K}_{N}^{\pm}(m,\ell, t, z)\right| \lesssim \left(1+|t| 2^{4 N}\right)^{-\frac{1}{2}}$. If $t2^{4N} \geq 1$, since
$$
\left|\partial_s^2 U_{\pm}\right|=\left|t2^{2N} Q_m^{\prime\prime}(2^N s)\right| \gtrsim t2^{4N}\left\langle m \right\rangle^{-1},$$
then by Van der Corput lemma,
$$
\left|\mathcal{K}_{N}^{\pm}(m,\ell,t,z)\right| \lesssim\left(|t| 2^{4 N}\left\langle m \right\rangle^{-1}\right)^{-\frac{1}{2}} \lesssim\left(1+|t| 2^{4 N}\right)^{-\frac{1}{2}}\left\langle m \right\rangle^{\frac{1}{2}}.
$$
\end{proof}
\begin{remark}\label{estimate-integral_remark}
Note that $s^{2\ell}\leq 2^{-4l}$ for $l\leq0, \frac{1}{4}\leq s \leq 1$. Therefore, according to the above proof, if the term $s^{2\ell+h}$ appears in $\mathcal{K}_{N}^\pm(m,\ell,t,z)(\ell \leq0)$ in place of $s^h$, the upper bounds of $\left|\mathcal{K}_{N}^{\pm}(m,\ell,t,z)\right|$ will be multiplied by $2^{-4l}$.
\end{remark}
\begin{lemma}\label{sum}
Let $\Theta_{N_0, N}(m,t)$ be defined by \eqref{Theta}. Then
\begin{itemize}
\item[(i)] For $-\frac{1}{2}<\ell\leq0,$
$$
\sum_{N=-\infty}^{+\infty}2^{(1+2\ell)N} \Theta_{N_0, N}(0,t) \lesssim_\ell |t|^{-\frac{1+2\ell}{2}}.
$$
\item[(ii)]
$\sum_{N=-\infty}^{+\infty}2^N\Theta_{N_0, N}(m,t) \lesssim \left\langle m\right\rangle |t|^{-\frac{1}{2}}.
$
\vskip 0.4cm
\item[(iii)]
$
\sum_{N=-\infty}^{+\infty}2^N(1+|t|  2^{4N})^{-\frac{1}{2}} \lesssim  |t|^{-\frac{1}{4}}.
$
\end{itemize}
\end{lemma}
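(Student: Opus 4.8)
The plan is to prove all three estimates by one common device: each sum is geometric in $N$, and I split the summation range at the dyadic scale where the parenthetical factor transitions from size $\sim 1$ to size $\sim |t|2^{2N}$ (or $\sim|t|2^{4N}$). For parts (i) and (ii) I introduce the threshold $N_\star:=\lfloor-\tfrac12\log_2|t|\rfloor$, so that $|t|2^{2N}\lesssim 1$ for $N\le N_\star$ and $|t|2^{2N}\gtrsim 1$ for $N>N_\star$; for part (iii) I use $N_{\star\star}:=\lfloor-\tfrac14\log_2|t|\rfloor$, where $|t|2^{4N}$ crosses $1$. On the lower range the factors are $\sim 1$, so the sum is a convergent geometric series dominated by its top term at $N=N_\star$ (resp.\ $N_{\star\star}$), which contributes $2^{aN_\star}\sim|t|^{-a/2}$ (resp.\ $2^{aN_{\star\star}}\sim|t|^{-a/4}$). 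On the upper range I replace $(1+|t|2^{2N})^{-\sigma}$ by $(|t|2^{2N})^{-\sigma}$ and sum the resulting geometric series, dominated by its bottom term near $N_\star$; this again returns the claimed power of $|t|$.

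For part (i) we have $\langle m\rangle=1$ and $C_0=2$, so the ``near'' region $|N-N_0|\le 2$ holds at most five indices. I would bound these five terms, each of the form $2^{(1+2\ell)N}(1+|t|2^{2N})^{-1/2}$, by maximizing the single function $x\mapsto x^{(1+2\ell)/2}(1+|t|x)^{-1/2}$ in $x=2^{2N}>0$: its maximum is comparable to $|t|^{-(1+2\ell)/2}$ with a constant depending on $\ell$, which is exactly the source of the $\lesssim_\ell$ in the statement. The remaining ``far'' terms are dominated by the full sum $\sum_N 2^{(1+2\ell)N}(1+|t|2^{2N})^{-1}$, to which the split at $N_\star$ applies: the lower tail converges because $1+2\ell>0$ (i.e.\ $\ell>-\tfrac12$), while the upper tail $|t|^{-1}\sum_{N>N_\star}2^{(2\ell-1)N}$ converges because $2\ell-1<0$ (guaranteed by $\ell\le0$) and equals $|t|^{-(1+2\ell)/2}$ up to an absolute constant. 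This is precisely why the hypothesis $-\tfrac12<\ell\le0$ is needed.

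For part (ii) the exponent is $a=1$ and $m$ is arbitrary, so the same split applies, but now the number of indices in the near region $|N-N_0|\le C_m$ grows with $m$, being $\sim C_m\sim\log_2\langle m\rangle$ for large $|m|$. Each near term $\langle m\rangle^{1/2}2^N(1+|t|2^{2N})^{-1/2}$ is bounded by $\langle m\rangle^{1/2}|t|^{-1/2}$, since $\sqrt{x/(1+|t|x)}$ increases to the supremum $|t|^{-1/2}$; hence the near contribution is at most $\langle m\rangle^{1/2}(2C_m+1)|t|^{-1/2}$. The far contribution, controlled by $\langle m\rangle\sum_N 2^N(1+|t|2^{2N})^{-1}\lesssim\langle m\rangle|t|^{-1/2}$, already has the desired form. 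The \emph{main obstacle} is to absorb the near contribution into the stated bound: since $C_m\sim\log_2\langle m\rangle$, one must verify $\langle m\rangle^{1/2}\log_2\langle m\rangle\lesssim\langle m\rangle$, which holds because $(\log x)/\sqrt{x}$ is bounded on $[1,\infty)$. This is exactly why the weaker $-1/2$ power in the near region still yields the global factor $\langle m\rangle$ rather than something larger.

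Part (iii) is the simplest, having no $N_0$-dependent dichotomy. I would split at $N_{\star\star}$: on the lower range $\sum_{N\le N_{\star\star}}2^N\lesssim 2^{N_{\star\star}}\sim|t|^{-1/4}$, while on the upper range $\sum_{N>N_{\star\star}}2^N(|t|2^{4N})^{-1/2}=|t|^{-1/2}\sum_{N>N_{\star\star}}2^{-N}\lesssim|t|^{-1/2}2^{-N_{\star\star}}\sim|t|^{-1/4}$, giving the claim. No part of this argument is genuinely delicate beyond the logarithmic absorption in part (ii).
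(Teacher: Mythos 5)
Your proposal is correct and follows essentially the same route as the paper's proof: split each dyadic sum at the scale where $|t|2^{2N}$ (resp.\ $|t|2^{4N}$) is of order one, bound the at most $O(C_m)$ near-region terms by their pointwise supremum $\langle m\rangle^{1/2}|t|^{-1/2}$, and absorb the factor $C_m\lesssim 1+\log_2\langle m\rangle$ using $\langle m\rangle^{1/2}\log_2\langle m\rangle\lesssim\langle m\rangle$. The only cosmetic difference is in part (i) for $-\tfrac12<\ell<0$, where the paper simply bounds $\Theta_{N_0,N}(0,t)$ uniformly by $(1+|t|2^{2N})^{-1/2}$ and splits, rather than invoking the near/far dichotomy and maximizing $x^{(1+2\ell)/2}(1+|t|x)^{-1/2}$ as you do; both yield the same geometric series and the same $\ell$-dependent constants.
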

\begin{proof}
(i) For $t \neq 0$, there exists $N_{0}^{\prime} \in \mathbb{Z}$ such that $2^{N_{0}^{\prime}} \sim |t|^{-\frac{1}{2}}$. If $-\frac{1}{2}<\ell<0,$ then
$$
\begin{aligned}
 \sum_{N=-\infty}^{+\infty}2^{(1+2\ell)N} \Theta_{N_0, N}(0,t)
& \lesssim \sum_{N=-\infty}^{+\infty} 2^{(1+2\ell)N}(1+|t|2^{2N})^{-\frac{1}{2}}\\
& \lesssim \sum_{N=-\infty}^{N_0^{\prime}} 2^{(1+2\ell)N}+ \sum_{N=N_0^{\prime}+1}^{+\infty}2^{(1+2\ell)N}(|t|2^{2N})^{-\frac{1}{2}}\lesssim_\ell |t|^{-\frac{1+2\ell}{2}}.
\end{aligned}
$$
If $\ell=0$, then
$$
\begin{aligned}
\sum_{N=-\infty}^{+\infty}2^{N} \Theta_{N_0, N}(0,t)& \lesssim \sum_{\left|N-N_0\right| \leq 2} 2^{N}(1+|t|2^{2N})^{-\frac{1}{2}}+\sum_{\left|N-N_0\right| > 2} 2^N(1+|t|  2^{2N})^{-1} \\
\label{free_sum_m_high}
& \lesssim |t|^{-\frac{1}{2}}+ \sum_{N=-\infty}^{N_0^{\prime}} 2^N+  \sum_{N=N_0^{\prime}+1}^{+\infty} 2^{N}(|t|2^{2N})^{-1} \lesssim |t|^{-\frac{1}{2}}.
\end{aligned}
$$
(ii) Similar to (i) for $\ell=0$, since $C_m \lesssim 1+\log_2 \left\langle m \right\rangle$, we obtain that
$$
\begin{aligned}
\sum_{N=-\infty}^{+\infty}2^{N} \Theta_{N_0, N}(m,t)  \lesssim C_m\left\langle m\right\rangle^{\frac{1}{2}} |t|^{-\frac{1}{2}}+ \left\langle m\right\rangle \sum_{N=-\infty}^{N_0^{\prime}} 2^N+  \left\langle m\right\rangle \sum_{N=N_0^{\prime}+1}^{+\infty} 2^{N}(|t|2^{2N})^{-1}
\lesssim \left\langle m\right\rangle |t|^{-\frac{1}{2}}.
\end{aligned}
$$
(iii) For $t \neq 0$, there exists $N_{0}^{\prime\prime} \in \mathbb{Z}$ such that $2^{N_{0}^{\prime\prime}} \sim |t|^{-\frac{1}{4}}$. Then
\begin{align*}
\sum_{N=-\infty}^{+\infty}2^N(1+|t|  2^{4N})^{-\frac{1}{2}}& = \sum_{N=-\infty}^{N_0^{\prime\prime}} 2^N(1+|t|2^{4N})^{-\frac{1}{2}}+\sum_{N=N_0^{\prime\prime}+1}^{+\infty} 2^N(1+|t|  2^{4N})^{-\frac{1}{2}}  \\
\label{free_sum_m_low}
&\lesssim   \sum_{N=-\infty}^{N_0^{\prime\prime}}2^N+
\sum_{N=N_0^{\prime\prime}+1}^{+\infty} 2^{-N}|t|^{-\frac{1}{2}}
\lesssim  |t|^{-\frac{1}{4}}.
\end{align*}
\end{proof}

 Next we establish the $L^1-L^{\infty}$ bounds of the two  free operators  $
 \cos (t \Delta) $ and $\frac{\sin (t \Delta)}{\Delta}.$
 Note that
$$
\frac{\sin(t\Delta)}{\Delta}=\frac{1}{2}\int_{-t}^{t} \cos(s\Delta)ds,
$$
then it is sufficient to consider the decay estimate of the operator $\cos (t \Delta)$.
 By Stone's formula and  Littlewood-Paley decomposition \eqref{varphi_0}, it is enough to study the following integral kernels for each $N$ and sum up, which are given by the following Proposition \ref{free_case}.
$$
\begin{aligned}
\int_0^{\infty} e^{-i t \lambda^2} \lambda^{3+2\ell} \varphi_0(2^{-N} \lambda)R_0^{\pm}(\lambda^4)(x,y) d \lambda,~-\frac{1}{2}<\ell\leq0.
\end{aligned}
$$
\begin{proposition}\label{free_case}
We have the following estimates hold:
\begin{itemize}
\item[(i)] For $-\frac{1}{2}<\ell\leq0$,
$$
\left\|(-\Delta)^\ell e^{i t \Delta} \right\|_{L^1 \rightarrow L^{\infty}} \lesssim_\ell |t|^{-\frac{1+2\ell}{2}}.
$$
\vskip 0.3cm
\item[(ii)]
$
\|\cos (t \Delta) \|_{L^1 \rightarrow L^{\infty}} \lesssim|t|^{-\frac{1}{2}}
$
\ and \
$
 \left\|\frac{\sin (t \Delta)}{\Delta} \right\|_{L^1 \rightarrow L^{\infty}} \lesssim|t|^{\frac{1}{2}}.
$
\end{itemize}
\end{proposition}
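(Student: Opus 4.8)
The plan is to reduce the operator $(-\Delta)^\ell e^{it\Delta}$ to a dyadic sum of oscillatory integrals of exactly the type handled by Lemma~\ref{estimate-integral}(i), bound each piece by $\Theta_{N_0,N}(0,t)$, and then sum over the dyadic scales using Lemma~\ref{sum}(i). First I would record, via the Stone formula \eqref{stone-H-alpha} taken with $V=0$ and $m=0$ (so that $\sqrt{\Delta^2}=-\Delta$, $(\Delta^2)^{\ell/2}=(-\Delta)^\ell$, and $e^{-it\sqrt{\Delta^2}}=e^{it\Delta}$ at the level of Fourier symbols), the representation
$$(-\Delta)^\ell e^{it\Delta}=\frac{2}{\pi i}\int_0^\infty e^{-it\lambda^2}\lambda^{3+2\ell}\big[R_0^+(\lambda^4)-R_0^-(\lambda^4)\big]\,d\lambda.$$
Using the explicit kernel \eqref{free_resolvent} together with $F_\pm(s)=\pm ie^{\pm is}-e^{-s}$, a direct computation gives $F_+(s)-F_-(s)=2i\cos s$, hence
$$R_0^+(\lambda^4)(x,y)-R_0^-(\lambda^4)(x,y)=\frac{i\cos(\lambda|x-y|)}{2\lambda^3}.$$
The factor $\lambda^{-3}$ cancels three powers of $\lambda$, so after inserting the Littlewood-Paley partition \eqref{varphi_0} it suffices to bound, uniformly in $x,y$, the kernel
$$\frac{1}{\pi}\sum_{N\in\Z}\int_0^\infty e^{-it\lambda^2}\lambda^{2\ell}\cos(\lambda|x-y|)\,\varphi_0(2^{-N}\lambda)\,d\lambda.$$

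Next I would write $\cos(\lambda|x-y|)=\tfrac12\big(e^{i\lambda|x-y|}+e^{-i\lambda|x-y|}\big)$ and rescale $\lambda=2^N s$. Each dyadic piece then becomes $2^{(1+2\ell)N}\mathcal{K}_N^{\pm}(0,\ell,t,z)$ in the notation of Lemma~\ref{estimate-integral}, with $\Psi(z)=|x-y|$, exponent $h=2\ell$, and amplitude $\Phi\equiv1$. Since $2\ell\in(-1,0]$, the weight $s^{2\ell}\varphi_0(s)$ and the trivial amplitude $\Phi\equiv1$ satisfy the boundedness and $C^1$ hypotheses of that lemma on the support $[\tfrac14,1]$, so part~(i) yields $|\mathcal{K}_N^{\pm}(0,\ell,t,z)|\lesssim\Theta_{N_0,N}(0,t)$ with $N_0=\lfloor\log_2(|x-y|/|t|)\rfloor$. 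Summing $2^{(1+2\ell)N}\Theta_{N_0,N}(0,t)$ over $N\in\Z$ by Lemma~\ref{sum}(i) produces the bound $\lesssim_\ell|t|^{-(1+2\ell)/2}$, which is uniform in $x,y$ because the summation estimate is independent of the value of $N_0$. As this is a pointwise bound on the Schwartz kernel, it is precisely the $L^1\to L^\infty$ bound asserted in part~(i).

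Part~(ii) then follows from part~(i). Taking $\ell=0$ and writing $\cos(t\Delta)=\tfrac12\big(e^{it\Delta}+e^{-it\Delta}\big)$ gives $\|\cos(t\Delta)\|_{L^1\to L^\infty}\lesssim|t|^{-1/2}$; the two exponential signs are treated identically since the kernel bound depends only on $|t|$ and $|x-y|$ (equivalently one replaces $t$ by $-t$). For the sine propagator I would invoke the identity $\frac{\sin(t\Delta)}{\Delta}=\tfrac12\int_{-t}^{t}\cos(s\Delta)\,ds$ recorded just before the statement, which yields $\big\|\tfrac{\sin(t\Delta)}{\Delta}\big\|_{L^1\to L^\infty}\lesssim\int_0^{|t|}s^{-1/2}\,ds\lesssim|t|^{1/2}$, the singularity of $s^{-1/2}$ at $s=0$ being integrable.

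The genuinely hard analytic content — the stationary/non-stationary phase dichotomy $|N-N_0|\le 2$ versus $|N-N_0|>2$, resolved through Van der Corput and integration by parts — is already packaged inside Lemma~\ref{estimate-integral}, so for this proposition the only points requiring care are: (a) verifying that the weight $s^{2\ell}\varphi_0(s)$ remains smooth and bounded together with its derivative on $[\tfrac14,1]$ when $\ell<0$, so that the lemma's hypotheses genuinely apply; and (b) checking that the summation in Lemma~\ref{sum}(i) is uniform in the geometric parameter $N_0=N_0(x,y)$, since this uniformity is exactly what upgrades the pointwise kernel estimate to the operator-norm estimate. I would also note that the restriction $\ell>-\tfrac12$ is sharp at the level of summation, as $\sum_{N\le N_0'}2^{(1+2\ell)N}$ diverges once $1+2\ell\le 0$.
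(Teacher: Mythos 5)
Your proposal is correct and follows essentially the same route as the paper's proof: Stone's formula, Littlewood--Paley decomposition, Lemma \ref{estimate-integral}(i) with $\Psi(z)=|x-y|$, summation via Lemma \ref{sum}(i) (which is indeed uniform in $N_0$), and part (ii) deduced from part (i) exactly as in the text. The only cosmetic differences are that you simplify $R_0^+(\lambda^4)-R_0^-(\lambda^4)$ to the cosine kernel first (so $\Phi\equiv 1$), whereas the paper treats each $R_0^{\pm}$ separately with amplitude $\mathcal{F}_0^{\pm}(\lambda|x-y|)$, and you absorb the factor $s^{2\ell}$ by taking $h=2\ell$ in the lemma rather than invoking Remark \ref{estimate-integral_remark}; neither changes the substance of the argument.
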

\begin{proof}
(i) For each $N \in \mathbb{Z}$, we write
$$
K_{0, N}^{ \pm}(\ell,t, x, y):=\int_0^{\infty} e^{-i t \lambda^2} \lambda^{3+2\ell} \varphi_0(2^{-N} \lambda)R_0^{\pm}(\lambda^4)(x,y) d \lambda.
$$
Note that
$$
R_0^{ \pm}(\lambda^4)(x, y)=\frac{ \pm i e^{ \pm i \lambda|x-y|}-e^{-\lambda|x-y|}}{4 \lambda^3}:=\frac{e^{ \pm i \lambda|x-y|}}{4 \lambda^3}\mathcal{F}_{0}^{\pm}\left(\lambda|x-y|\right).
$$
Utilizing the change of variable $\lambda \rightarrow 2^Ns$, we have
$$
\begin{aligned}
K_{0, N}^{ \pm}(\ell,t, x, y)=&  \frac{2^{(1+2\ell)N}}{4} \int_0^{\infty} e^{-i t 2^{2 N} s^2} s^{2\ell} \varphi_0(s) e^{ \pm i 2^N s|x-y|}\mathcal{F}_0^{ \pm}(2^N s|x-y|) ds.
\end{aligned}
$$
Let $z=(x, y)$ and
$$
\Psi(z)=|x-y|, \quad \Phi^{ \pm}(2^N s, z)=\mathcal{F}_0^{ \pm}(2^N s|x-y|).
$$
It can be verified that for any $s \in \operatorname{supp} \varphi_0(s)$ and $k=0,1,$
$$\left|\partial_s^k\Phi^{ \pm}\left(2^N s, z\right)\right| \lesssim 1,$$ then by Lemma \ref{estimate-integral} and Remark \ref{estimate-integral_remark}, we obtain that
$$
\sup _{x, y \in \mathbb{R}}\left|K_{0, N}^{ \pm}(\ell,t, x, y)\right| \lesssim 2^{-4\ell} 2^{(1+2\ell)N} \Theta_{N_0, N}(0,t).
$$
Finally, by Lemma \ref{sum} (i), for $m=0,-\frac{1}{2}<\ell\leq0$,
$$
\sum_{N=-\infty}^{+\infty}\left|K_{0, N}^{ \pm}(\ell,t, x, y)\right| \lesssim _\ell |t|^{-\frac{1+2\ell}{2}},
$$
which implies that $\left\|(-\Delta)^\ell e^{i t \Delta} \right\|_{L^1 \rightarrow L^{\infty}} \lesssim _\ell |t|^{-\frac{1+2\ell}{2}}.$

(ii)
Since
$$
\left\| e^{i t \Delta} \right\|_{L^1 \rightarrow L^{\infty}} \lesssim|t|^{-\frac{1}{2}}, \quad
\cos (t \Delta)=\frac{e^{it\Delta}+e^{-it\Delta}}{2},
$$
then
$$
\|\cos (t \Delta) \|_{L^1 \rightarrow L^{\infty}} \lesssim|t|^{-\frac{1}{2}}.
$$
Note that
$$
\frac{\sin(t\Delta)}{\Delta}=\frac{1}{2}\int_{-t}^{t} \cos(s\Delta)ds,
$$
then
$$
\left\|\frac{\sin(t\Delta)}{\Delta}\right\|_{L^1 \rightarrow L^{\infty}} \lesssim
\int_{-t}^{t} \left\|\cos(s\Delta) \right\|_{L^1 \rightarrow L^{\infty}}ds \lesssim
\int_{-t}^{t} |s|^{-\frac{1}{2}} \lesssim |t|^{\frac{1}{2}}.
$$
\end{proof}
In the following, we consider the $L^1-L^{\infty}$ bounds of free terms
$$\cos (t \sqrt{\Delta^2+m^2}) \ \text{ and } \ \frac{\sin (t \sqrt{\Delta^2+m^2})}{\sqrt{\Delta^2+m^2}},~m \neq 0,
$$
both of which can be reduced into the form
$\left(\Delta^2+m^2\right)^{\frac{\ell}{2}}e^{-i t \sqrt{\Delta^2+m^2}},~\ell=-1,0$.
\begin{proposition}\label{free_case_m}
For $m \neq 0, ~\ell\leq0$,
$$
\begin{aligned}
&\left\|\left(\Delta^2+m^2\right)^{\frac{\ell}{2}}e^{-i t \sqrt{\Delta^2+m^2}}\chi_1(\Delta^2)\right\|_{L^1 \rightarrow L^{\infty}} \lesssim m^{\ell}\left\langle  m \right\rangle^{\frac{1}{2}}|t|^{-\frac{1}{4}}, \\
&\left\|\left(\Delta^2+m^2\right)^{\frac{\ell}{2}}e^{-i t \sqrt{\Delta^2+m^2}}\chi_2(\Delta^2)\right\|_{L^1 \rightarrow L^{\infty}} \lesssim m^{\ell}\left\langle  m \right\rangle |t|^{-\frac{1}{2}}.
\end{aligned}
$$
In particular,
for $m \neq 0$,
$$
\begin{aligned}
&\left\|\cos (t \sqrt{\Delta^2+m^2}) \chi_j(\Delta^2)\right\|_{L^1 \rightarrow L^{\infty}} \lesssim
\begin{cases}
  \left\langle  m \right\rangle^{\frac{1}{2}}|t|^{-\frac{1}{4}}, & \text{if}\  j=1, \\
  \left\langle  m \right\rangle |t|^{-\frac{1}{2}}, & \text{if} \ j=2;
\end{cases} \\
& \left \|\frac{\sin (t \sqrt{\Delta^2+m^2})}{\sqrt{\Delta^2+m^2}} \chi_j(\Delta^2)\right \|_{L^1 \rightarrow L^{\infty}}\lesssim
\begin{cases}
  m^{-1}\left\langle  m \right\rangle^{\frac{1}{2}}|t|^{-\frac{1}{4}}, & \text{if} \ j=1, \\
  \big(1+\frac{1}{m}\big) |t|^{-\frac{1}{2}}, & \text{if} \ j=2.
\end{cases}
\end{aligned}
$$
\end{proposition}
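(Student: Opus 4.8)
The plan is to mirror the argument of Proposition \ref{free_case}, now carrying the mass $m$ and the symbol $(\lambda^4+m^2)^{\frac{\ell}{2}}$. By the free version of Stone's formula \eqref{stone-H-alpha} (that is, with $R_V^{\pm}$ replaced by $R_0^{\pm}$), inserting the spectral cut-off $\chi_j(\Delta^2)=\chi_j(\lambda^4)=:\widetilde{\chi}_j(\lambda)$ and applying the Littlewood--Paley decomposition \eqref{varphi_0}, it suffices to bound for each $N\in\Z$ and each sign the dyadic kernel
$$K_{0,N}^{\pm}(\ell,m,t,x,y):=\int_0^{\infty} e^{-it\sqrt{\lambda^4+m^2}}\,\lambda^3(\lambda^4+m^2)^{\frac{\ell}{2}}\varphi_0(2^{-N}\lambda)\,\widetilde{\chi}_j(\lambda)\,\big[R_0^{+}(\lambda^4)-R_0^{-}(\lambda^4)\big](x,y)\,d\lambda,$$
and then sum over $N$. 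Writing $R_0^{\pm}(\lambda^4)(x,y)=\frac{e^{\pm i\lambda|x-y|}}{4\lambda^3}\mathcal{F}_0^{\pm}(\lambda|x-y|)$ as in the proof of Proposition \ref{free_case}, the factor $\lambda^3$ cancels $\frac{1}{4\lambda^3}$, and after the substitution $\lambda=2^N s$ each of the two sign contributions takes the form
$$K_{0,N}^{\pm}=\frac{2^N}{4}\int_0^{\infty} e^{-it\sqrt{2^{4N}s^4+m^2}}\,e^{\pm i2^N s|x-y|}\varphi_0(s)\,\Phi^{\pm}(2^N s,m,\ell,z)\,ds,$$
with $z=(x,y)$, $\Psi(z)=|x-y|$, $h=0$, and amplitude $\Phi^{\pm}(2^N s,m,\ell,z)=(2^{4N}s^4+m^2)^{\frac{\ell}{2}}\widetilde{\chi}_j(2^N s)\mathcal{F}_0^{\pm}(2^N s|x-y|)$. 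This is precisely the oscillatory integral $\mathcal{K}_N^{\pm}(m,\ell,t,z)$ of Lemma \ref{estimate-integral}, up to the harmless prefactor $2^N/4$.

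The key point is to extract $m^{\ell}$ from the symbol and to verify that the remainder satisfies the amplitude hypotheses of Lemma \ref{estimate-integral}. I would write $(2^{4N}s^4+m^2)^{\frac{\ell}{2}}=m^{\ell}(1+u)^{\frac{\ell}{2}}$ with $u:=2^{4N}s^4/m^2\geq0$. Since $\ell\leq0$, the factor $(1+u)^{\frac{\ell}{2}}$ is bounded by $1$; and because $\partial_s u=4u/s$ with $s\in[\tfrac14,1]$, its $s$-derivative equals $2\ell\,u(1+u)^{\frac{\ell}{2}-1}/s$, where $u(1+u)^{\frac{\ell}{2}-1}$ is bounded uniformly in $u\geq0$ (it vanishes at $u=0$ and behaves like $u^{\ell/2}$ as $u\to\infty$). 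Hence $|\partial_s^k(1+u)^{\frac{\ell}{2}}|\lesssim1$ for $k=0,1$, uniformly in $N,m$. Combined with the already established bounds $|\partial_s^k\mathcal{F}_0^{\pm}(2^N s|x-y|)|\lesssim1$ and the fact that $\widetilde{\chi}_j(2^N s)$ together with its $s$-derivative is bounded on the relevant dyadic supports, this shows $|\partial_s^k(m^{-\ell}\Phi^{\pm})|\lesssim1$ for $k=0,1$, so $m^{-\ell}\Phi^{\pm}$ meets the requirement of Lemma \ref{estimate-integral}.

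It then remains to apply the lemma on each frequency range. For the low-energy part $j=1$, the cut-off $\widetilde{\chi}_1$ forces $N\leq N'$, so Lemma \ref{estimate-integral}(iii) gives $|K_{0,N}^{\pm}|\lesssim m^{\ell}\,2^N\langle m\rangle^{\frac{1}{2}}(1+|t|2^{4N})^{-\frac{1}{2}}$, and summing over $N$ via Lemma \ref{sum}(iii) yields the bound $m^{\ell}\langle m\rangle^{\frac{1}{2}}|t|^{-\frac{1}{4}}$. For the high-energy part $j=2$, the cut-off $\widetilde{\chi}_2$ forces $N\geq N''$, so Lemma \ref{estimate-integral}(ii) gives $|K_{0,N}^{\pm}|\lesssim m^{\ell}\,2^N\Theta_{N_0,N}(m,t)$, and summing via Lemma \ref{sum}(ii) yields $m^{\ell}\langle m\rangle|t|^{-\frac{1}{2}}$. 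Finally, the ``in particular'' statements follow by specialization: writing $\cos(t\sqrt{\Delta^2+m^2})=\tfrac12\big(e^{it\sqrt{\Delta^2+m^2}}+e^{-it\sqrt{\Delta^2+m^2}}\big)$ corresponds to $\ell=0$, giving $\langle m\rangle^{\frac{1}{2}}|t|^{-\frac{1}{4}}$ and $\langle m\rangle|t|^{-\frac{1}{2}}$; while $\frac{\sin(t\sqrt{\Delta^2+m^2})}{\sqrt{\Delta^2+m^2}}$ corresponds to $\ell=-1$, giving $m^{-1}\langle m\rangle^{\frac{1}{2}}|t|^{-\frac{1}{4}}$ and $m^{-1}\langle m\rangle|t|^{-\frac{1}{2}}=\big(1+\tfrac1m\big)|t|^{-\frac{1}{2}}$. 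The main obstacle is the uniform-in-$(N,m)$ control of the amplitude after extracting $m^{\ell}$, i.e.\ the derivative bounds for $(1+2^{4N}s^4/m^2)^{\frac{\ell}{2}}$; once this is secured, the estimates follow mechanically from Lemmas \ref{estimate-integral} and \ref{sum}.
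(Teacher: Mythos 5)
Your proposal is correct and follows essentially the same route as the paper's proof: Stone's formula with the free resolvent, Littlewood--Paley decomposition, the rescaling $\lambda=2^N s$, normalization of the amplitude by $m^{-\ell}$, then Lemma \ref{estimate-integral} (iii) for $j=1$ and (ii) for $j=2$, followed by summation via Lemma \ref{sum} (iii) and (ii). The only difference is that you spell out the uniform derivative bound for $(1+2^{4N}s^4/m^2)^{\ell/2}$, a step the paper merely asserts with ``it can be verified.''
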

\begin{proof}
 By Stone's formula, $\left(\Delta^2+m^2\right)^{\frac{\ell}{2}}e^{-i t \sqrt{\Delta^2+m^2}}\chi_j(\Delta^2)$ can be expressed as
$$
 \int_0^{\infty} e^{-i t \sqrt{\lambda^4+m^2}}\lambda^3 (\lambda^4+m^2)^{\frac{\ell}{2}}\widetilde{\chi}_j( \lambda)
\left[R_0^{+}(\lambda^4)-R_0^{-}(\lambda^4)\right](x,y)  d \lambda,~ j=1,2,
$$
where $\widetilde{\chi}_j(\lambda):=\chi_j(\lambda^4),~\lambda>0.$
By Littlewood-Paley decomposition \eqref{varphi_0}, we can decompose $\widetilde{\chi}_j$ as follows:
$$
\widetilde{\chi}_j(\lambda)=\sum_{N=-\infty}^{+\infty}\widetilde{\chi}_j(\lambda)
\varphi_0(2^{-N}\lambda),\quad \lambda>0, ~j=1,2.
$$
Thus, in order to deal with $\left(\Delta^2+m^2\right)^{\frac{\ell}{2}}e^{-i t \sqrt{\Delta^2+m^2}}\chi_j(\Delta^2)$ for $j=1,2$, it is sufficient to focus on the following integral kernels for each $N \in \mathbb{Z}$ and sum up:
$$
K_{j, N}^{ \pm}(m,\ell,t, x, y):=\int_0^{\infty} e^{-i t \sqrt{\lambda^4+m^2}} \lambda^{3}\left(\lambda^4+m^2\right)^{\frac{\ell}{2}} \widetilde{\chi}_j(\lambda)\varphi_0(2^{-N} \lambda) R_0^{ \pm}(\lambda^4)(x, y) d \lambda,~j=1,2.
$$
Since
$$
\operatorname{supp} \widetilde{\chi}_1 \subset [0,(2\lambda_0)^{\frac{1}{4}}],~ \operatorname{supp} \widetilde{\chi}_2\subset [\lambda_0^{\frac{1}{4}},+\infty)
\ \text{and}\  \operatorname{supp} \varphi_0(2^{-N}\lambda) \subset [2^{N-2},2^N],
$$
then we may take
$
N^\prime=\left \lfloor  \frac{1}{4} \log_2\lambda_0+\frac{13}{4}\right \rfloor
 ~\text{and} ~ N^{\prime\prime}=\left \lfloor\frac{1}{4} \log_2\lambda_0\right \rfloor
$
so that
$$
K_{1, N}^{ \pm}(m,\ell,t, x, y)=0 \ \text{for}\  N\geq N^\prime \quad \text{and} \quad
K_{2, N}^{ \pm}(m,\ell,t, x, y)=0 \ \text{for}\  N\leq N^{\prime\prime}.
$$

Now, we start to pay attention to $K_{j, N}^{ \pm}(m,\ell,t, x, y)$ for $j=1,2$. Note that
$$
R_0^{ \pm}(\lambda^4)(x, y)=\frac{ \pm i e^{ \pm i \lambda|x-y|}-e^{-\lambda|x-y|}}{4 \lambda^3}:=\frac{e^{ \pm i \lambda|x-y|}}{4 \lambda^3}\mathcal{F}_{0}^{\pm}\left(\lambda|x-y|\right).
$$
Utilizing the change of variable $\lambda \rightarrow 2^Ns$, $K_{j, N}^{ \pm}(m,\ell,t, x, y)$ can be written as
$$
\begin{aligned}
\frac{2^N}{4} \int_0^{\infty} e^{-i t \sqrt{2^{4 N} s^4+m^2}} \widetilde{\chi}_j(2^N s)\varphi_0(s) e^{ \pm i 2^N s|x-y|}\left(2^{4 N} s^4+m^2\right)^{\frac{\ell}{2}}\mathcal{F}_0^{ \pm}(2^N s|x-y|) ds,~j=1,2.
\end{aligned}
$$
Let $z=(x, y)$, $\Psi(z)=|x-y|$ and for $j=1,2,$
$$
\begin{aligned}
\Phi_j^{ \pm}(2^N s, m,\ell, z)=
m^{-\ell}\left(2^{4 N} s^4+m^2\right)^{\frac{\ell}{2}}\widetilde{\chi}_j(2^N s)\mathcal{F}_0^{ \pm}(2^N s|x-y|), ~ m \neq 0,\ell\leq 0.
\end{aligned}
$$
It can be verified that for any $s \in \operatorname{supp} \varphi_0(s)$ and $k=0,1,$
$$\left|\partial_s^k \Phi_j^{ \pm}\left(2^N s, m,\ell, z\right)\right| \lesssim 1,~j=1,2,$$ then by Lemma \ref{estimate-integral}, we obtain that
for $m \neq 0, \ell\leq0$,
$$
\begin{aligned}
& \sup _{x, y \in \mathbb{R}}\left|K_{1, N}^{ \pm}(m,\ell,t, x, y)\right| \lesssim 2^N m^{\ell} \left\langle  m \right\rangle^{\frac{1}{2}} \left(1+|t| 2^{4 N}\right)^{-\frac{1}{2}}, \quad N \leq N^\prime, \\
& \sup _{x, y \in \mathbb{R}}\left|K_{2, N}^{ \pm}(m,\ell,t, x, y)\right| \lesssim  2^N m^{\ell} \Theta_{N_0, N}(m, t),\quad N \geq N^{\prime\prime}.
\end{aligned}
$$
Finally, by Lemma \ref{sum} (ii) and (iii), for $m\neq0,\ell\leq0$,
$$
\begin{aligned}
 \sum_{N=-\infty}^{+\infty} \left|K_{1, N}^{ \pm}(m,\ell,t, x, y)\right| \lesssim
m^{\ell}\left\langle  m \right\rangle^{\frac{1}{2}}|t|^{-\frac{1}{4}}, \quad
 \sum_{N=-\infty}^{+\infty} \left|K_{2, N}^{ \pm}(m,\ell,t, x, y)\right| \lesssim m^{\ell} \left\langle  m \right\rangle |t|^{-\frac{1}{2}}.
\end{aligned}
$$

Thus we complete the proof.
\end{proof}
\section{Low energy decay estimates}\label{sec:low-energy}
\subsection{Asymptotic expansions of resolvent near zero}\label{subsec:Q}
This subsection is mainly devoted to recalling the asymptotic
expansions of $(M^{\pm}(\lambda))^{-1}$  at low energy $\lambda\to 0^+$, see \cite{SWY22}.

Let $T_0:=U+v G_0 v$ with $G_0=(\Delta^2)^{-1}$ and $P=\|V\|_{L^1(\mathbb{R})}^{-1} v\langle v, \cdot\rangle$ be the orthogonal projection onto the span of $v$, i.e., $P L^2(\mathbb{R})=\operatorname{span}\{v\}$. Let $ Q_1:=\Id-P$ and define the subspaces $Q_{2}L^2,Q_{2}^{0}L^2,Q_{3}L^2$ of $L^2$ by
\begin{align*}
f\in Q_{2}L^2\ &\Longleftrightarrow\ f\in \mathrm{\mathop{span}}\{v,xv\}^\perp;\\
f\in Q_{2}^{0}L^2\ &\Longleftrightarrow\ f\in \mathrm{\mathop{span}}\{v,xv\}^\perp\ \text{and}\ T_0f\in \mathrm{\mathop{span}}\{v,xv\};\\
f\in Q_{3}L^2\ &\Longleftrightarrow\ f\in \mathrm{\mathop{span}}\{v,xv,x^2v\}^\perp\ \text{and}\ T_0f\in \mathrm{\mathop{span}}\{v\}.
\end{align*}
Note that $Q_{3}L^2\subset Q_{2}^{0}L^2\subset Q_{2}L^2\subset Q_{1}L^2$. Let $Q_{\alpha}$ and $Q_{2}^0$ be the orthogonal projection onto $Q_{\alpha}L^2$ and $Q_{2}^0L^2$, respectively. By definition, $Q_1,Q_2,Q_2^0,Q_{3}$ satisfy
\begin{equation}
\label{Q}
Q_{\alpha}(x^k v)=0,\quad \<x^kv,Q_{\alpha}f\>=0,\quad Q_{2}^0(x^k v)=0,\quad \<x^k v,Q_{2}^0f\>=0,
\end{equation}
here $k=0$ for $Q_1$, $k=0,1$ for $Q_{2},Q_2^0$ and $k=0,1,2$ for $Q_{3}$ .

These projection spaces above can be used to characterized the zero energy resonance types of $H$.
\begin{theorem}[{\cite[Theorem 1.7]{SWY22}}]
Assume that $H=\Delta^2+V$ with $\left|V(x)\right| \lesssim \left \langle  x\right \rangle^{-\mu}$ with some $\mu>0$ satisfying the condition \eqref{condition}. Then the following statements hold:
\begin{itemize}
\item zero is a regular point of $H$ if and only if $Q_2^0L^2(\mathbb{R})=\{0\};$
\item zero is the first kind resonance of $H$ if and only if  $Q_2^0L^2(\mathbb{R}) \neq \{0\}$ and $Q_3L^2(\mathbb{R})=\{0\};$
\item zero is the second kind resonance of $H$ if and only if  $Q_3L^2(\mathbb{R}) \neq \{0\}.$
\end{itemize}
\end{theorem}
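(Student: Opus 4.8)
The plan is to establish a linear Birman--Schwinger correspondence between the distributional zero--energy solutions of $H\phi=0$ lying in the weighted spaces of Definition \ref{definition_resonance} and the kernels of the finite--rank operators attached to $T_0=U+vG_0v$, and then to decide the weighted--space membership of each solution from an explicit asymptotic expansion. Two algebraic facts, both immediate from the definitions, frame everything. First, since $f\in Q_2L^2$ with $Q_2T_0f=0$ is exactly the requirement $T_0f\in(\{v,xv\}^\perp)^\perp=\operatorname{span}\{v,xv\}$, one has $Q_2^0L^2(\R)=\ker\!\big(Q_2T_0Q_2|_{Q_2L^2}\big)$, and likewise $Q_3L^2(\R)$ is the part of $\{v,xv,x^2v\}^\perp$ on which $T_0f\in\operatorname{span}\{v\}$. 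Second, the expansion of $R_0^\pm(\lambda^4)$ behind \eqref{free_resolvent} shows that $vR_0^\pm(\lambda^4)v$ carries Laurent singularities only at orders $\lambda^{-3}$ and $\lambda^{-1}$, and a direct check using $Q_2v=Q_2(xv)=0$ together with $\langle v,Q_2\,\cdot\,\rangle=\langle xv,Q_2\,\cdot\,\rangle=0$ shows that conjugation by $Q_2$ annihilates both; this is what singles out $Q_2T_0Q_2$ as the operator governing the zero--energy behaviour and motivates the spaces above.

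I would first carry out the forward construction. Given a nonzero $f\in Q_2^0L^2$, write $T_0f=\alpha v+\beta xv$ and put $\phi=-G_0vf+\alpha+\beta x$. Using $\Delta^2G_0=\Id$, the factorization $V=vUv$, $U^2=1$, and $\Delta^2(\alpha+\beta x)=0$, one computes $\Delta^2\phi=-vf$ and $V\phi=-vUvG_0vf+V(\alpha+\beta x)$; substituting $vG_0vf=\alpha v+\beta xv-Uf$ from $T_0f=\alpha v+\beta xv$ then gives $H\phi=0$ in the distributional sense, the polynomial $\alpha+\beta x$ being exactly the term needed to cancel the residual $\alpha V+\beta xV$. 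Because $f\in Q_2L^2$ forces the moments $m_0=\langle v,f\rangle$ and $m_1=\langle xv,f\rangle$ to vanish, the expansion
$$
-G_0vf(x)=-\tfrac{1}{12}\int|x-y|^3v(y)f(y)\,dy\ \sim\ -\tfrac{1}{12}\big(|x|^3m_0\mp3|x|^2m_1+3|x|\,m_2\mp m_3\big),\qquad x\to\pm\infty,
$$
shows $\phi$ grows at most linearly, so $\phi\in W_{3/2}(\R)$; if moreover $f\in Q_3L^2$, so that $m_2=0$ and $\beta=0$, then $\phi$ is bounded and $\phi\in W_{1/2}(\R)$.

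I would then prove the converse together with bijectivity. Starting from a nonzero $\phi\in W_{3/2}(\R)$ with $H\phi=0$, set $f=Uv\phi$, which lies in $L^2$ under \eqref{condition} because $v$ dominates the at most cubic growth of $\phi$. From $\Delta^2\phi=-V\phi=-vf$ one gets $\phi=-G_0vf+p$ for some cubic polynomial $p$; matching this against the $W_{3/2}$ growth forces $p$ to have degree $\le1$ and $m_0=m_1=0$, while $Uf+vG_0vf=vp$ reads $T_0f\in\operatorname{span}\{v,xv\}$, so $f\in Q_2^0L^2$. The $W_{1/2}$ case forces in addition $m_2=0$ and $\deg p=0$, i.e. $T_0f\in\operatorname{span}\{v\}$ and $f\in Q_3L^2$. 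Verifying that $f\mapsto\phi$ and $\phi\mapsto f$ are mutually inverse (the injectivity reducing to the fact that $G_0vf$ cannot be a polynomial unless $f=0$, given $f\perp\operatorname{span}\{v,xv\}$) identifies $Q_2^0L^2$ with the space of $W_{3/2}$ solutions and $Q_3L^2$ with the space of $W_{1/2}$ solutions. Combined with the exclusion of zero eigenvalues under \eqref{condition}, the three claimed equivalences follow at once: $Q_2^0L^2=\{0\}$ is the absence of a nonzero $W_{3/2}$ solution (zero regular); $Q_2^0L^2\neq\{0\}$ with $Q_3L^2=\{0\}$ produces a $W_{3/2}$ but no $W_{1/2}$ solution (first kind); and $Q_3L^2\neq\{0\}$ produces a $W_{1/2}$, hence (no eigenvalues) a second--kind, resonance.

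The main obstacle is the careful justification of the correspondence at its two endpoints. On the analytic side one must pin down the representation $\phi=-G_0vf+p$ with the correct polynomial and rule out spurious higher--degree pieces, and translate membership in $W_{3/2}(\R)$ and $W_{1/2}(\R)$ into the exact vanishing of the moments $m_k$ \emph{at both} spatial infinities; this is where the strong decay of $V$ in \eqref{condition} is consumed, through the absolute convergence of $\int|x-y|^3v(y)f(y)\,dy$ and of the corresponding expansions of its derivatives. On the algebraic side the delicate point is the injectivity of $f\mapsto\phi$, which must exploit the orthogonality relations $f\perp\operatorname{span}\{v,xv\}$ (resp. $\{v,xv,x^2v\}$) to exclude the degenerate possibility $vf\equiv0$.
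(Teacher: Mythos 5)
The paper does not prove this statement---it is quoted directly from \cite[Theorem 1.7]{SWY22}---and your proposal reconstructs essentially the proof given in that reference: the linear correspondence $f\mapsto \phi=-G_0vf+\alpha+\beta x$, $\phi\mapsto f=Uv\phi$ between $Q_2^0L^2$ (resp.\ $Q_3L^2$) and the $W_{3/2}$ (resp.\ $W_{1/2}$) distributional kernels of $H$, the matching of the moments $m_k$ against the growth of $\phi$ at $x\to\pm\infty$, and the use of the exclusion of zero eigenvalues under \eqref{condition}; your computations (in particular $H\phi=0$ via $vG_0vf=\alpha v+\beta xv-Uf$, and the degree/moment bookkeeping forcing $p_3=p_2=m_0=m_1=0$, with additionally $p_1=m_2=0$ in the $W_{1/2}$ case) are correct, so this is the same approach and it works. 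The one point you should make explicit is that your injectivity step ($Uf=0\Rightarrow f=U^2f=0$) requires the convention $U=1$ on the set $\{V=0\}$, so that $U^2=\Id$ almost everywhere; with the literal reading $\operatorname{sgn}(0)=0$, any nonzero $f\in L^2$ supported in $\{V=0\}$ would satisfy $f\perp\{v,xv,x^2v\}$ and $T_0f=0$, hence would lie in $Q_3L^2$ while being mapped to $\phi=0$, and the stated equivalences would then fail for, e.g., generic compactly supported $V$---so this convention is genuinely load-bearing rather than cosmetic.
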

Finally we recall the expansions of $\left(M^\pm(\lambda)\right)^{-1}$.
\begin{theorem}[{\cite[Theorem 1.8]{SWY22}},{\cite[Lemma 2.2]{MWY22}}]\label{lem_M}
Assume that $|V(x)| \lesssim \left \langle  x\right \rangle^{-\mu}$ with some $\mu>0$. Then there exists $0<\lambda_0 \ll 1$ such that $\left(M^{\pm}(\lambda)\right)^{-1}$ satisfies the following asymptotic expansions on $L^2(\R)$ for $0<\lambda\le \lambda_0$:
\begin{itemize}
\item[(i)] If zero is a regular point of $H$ and $\mu>13$, then
\begin{equation}\label{lem_M_01}
\left(M^{\pm}(\lambda)\right)^{-1}= Q_{2}A_{0}^0Q_{2}+\lambda Q_1A_{1}^0Q_1+\lambda^2\left(Q_1A_{21}^0Q_1+Q_{2}A_{22}^0+A_{23}^0Q_{2}\right)+\Gamma_3^0(\lambda);
\end{equation}
\item[(ii)] If zero is the first kind resonance of $H$ and $\mu>17$, then
\begin{equation}\label{lem_M_02}
\begin{split}
\left(M^{\pm}(\lambda)\right)^{-1}= &\lambda^{-1}Q_{2}^{0}A_{-1}^1Q_{2}^{0}+Q_{2}A_{01}^1Q_1+Q_1A_{02}^1Q_{2}+\lambda\left(Q_1A_{11}^1Q_1
\right.\\
&\left.\quad +Q_{2}A_{12}^1+A_{13}^1Q_{2}\right)
 +\lambda^2\left(Q_1A_{21}^1+A_{22}^1Q_1\right)+\Gamma_3^1(\lambda);
\end{split}
\end{equation}
\item[(iii)] If zero is the second kind resonance of $H$ and $\mu>25$, then
\begin{equation}\label{lem_M_03}
\begin{split}
\left(M^{\pm}(\lambda)\right)^{-1}= & \lambda^{-3}Q_{3}A_{-3}^2Q_{3}+\lambda^{-2}\left(Q_{3}A_{-21}^2Q_{2}+Q_{2}A_{-22}^2Q_{3}\right)
+\lambda^{-1}\left(Q_{2}A_{-11}^2Q_{2}\right.\\
&\left.+Q_{3}A_{-12}^2Q_1+Q_1A_{-13}^2Q_{3}\right) +Q_{2}A^2_{01}Q_1+Q_1A^2_{02}Q_{2}+Q_{3}A^2_{03}+A^2_{04}Q_{3}\\
&+\lambda\left(Q_1A_{11}^2Q_1+Q_{2}A^2_{12}+A^2_{13}Q_{2}\right)+\lambda^2\left(Q_1A^2_{21}+A^2_{22}Q_1\right)
+\Gamma_3^2(\lambda).
\end{split}
\end{equation}
\end{itemize}
Here in the statements (i)-(iii) above,
	\begin{itemize}
\item $A_{i}^k$ and $A_{ij}^k$ are $\lambda$-independent bounded operators in $L^2(\mathbb{R})$ such that all of the terms in \eqref{lem_M_01}-\eqref{lem_M_03} are absolutely bounded operators, where $k,i,j$ indicate the types of resonance the degree of $\lambda$ and the order of operators with the same power of $\lambda$, respectively.
\item $\Gamma_3^k(\lambda)$ is $\lambda$ dependent operator which satisfies
\begin{equation}\label{lem_Gambda}
\left\|\Gamma_3^k(\lambda)\right\|_{L^2 \rightarrow L^2}+\lambda\left\|\partial_\lambda\left(\Gamma_3^k(\lambda)\right)\right\|_{L^2 \rightarrow L^2} \lesssim \lambda^3, \quad k=0,1,2.
\end{equation}
\item The operators $A_{i}^k$, $A_{ij}^k$ and $\Gamma_3^k(\lambda)$ all depend on the sign $\pm$, but for simplity, we ignore the sign $\pm$.
\end{itemize}
\end{theorem}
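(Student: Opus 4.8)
The plan is to combine an explicit low-frequency expansion of the free resolvent kernel with an iterated Feshbach--Schur (Jensen--Nenciu) inversion. First I would Taylor expand the function $F_\pm(s)=\pm i e^{\pm is}-e^{-s}$ from \eqref{free_resolvent}. Writing $F_\pm(s)=\pm i\cos s-\sin s-e^{-s}$ and separating even and odd parts, one finds that the powers $s^1$ and $s^5$ cancel while $s^3$ survives, so that
$$
F_\pm(s)=(\pm i-1)-\tfrac{1\pm i}{2}s^2+\tfrac{1}{3}s^3-\tfrac{1\mp i}{24}s^4+O(s^6).
$$
Dividing by $4\lambda^3$ and setting $s=\lambda|x-y|$ produces an expansion of $R_0^\pm(\lambda^4)(x,y)$ in integer powers of $\lambda$ beginning at $\lambda^{-3}$, in which each coefficient of a nonpositive power of $\lambda$ is a polynomial in $x-y$, hence a separable kernel; note that the $\lambda^0$ coefficient is exactly $\tfrac{1}{12}|x-y|^3$, the kernel of $G_0=(\Delta^2)^{-1}$. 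Sandwiching by $v$ and adding $U$ gives
$$
M^\pm(\lambda)=U+vR_0^\pm(\lambda^4)v=\tfrac{\pm i-1}{4\lambda^3}\,v\langle v,\cdot\rangle+\lambda^{-1}T_{-1}^\pm+T_0+\lambda T_1^\pm+\cdots+\widetilde{\Gamma}(\lambda),
$$
where $T_0=U+vG_0v$ and the remaining coefficients are finite-rank operators built from $v,xv,x^2v,\dots$. The decay thresholds in \eqref{condition} enter precisely here: to place every coefficient in $\mathbb{AB}(L^2)$ and to control $\widetilde{\Gamma}(\lambda)$ together with one $\lambda$-derivative, one needs finitely many weighted moments $\langle x\rangle^k v\in L^2$, and the number of moments required grows with the resonance order.

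Next I would invert $M^\pm(\lambda)$ by iterating the Jensen--Nenciu lemma: if $S$ is a projection and $A+S$ is boundedly invertible, then $A$ is invertible if and only if $B:=S-S(A+S)^{-1}S$ is invertible on $SL^2$, in which case
$$
A^{-1}=(A+S)^{-1}+(A+S)^{-1}S\,B^{-1}\,S(A+S)^{-1}.
$$
The leading $\lambda^{-3}$ term is rank one in the direction $v$, so applying the lemma first with the projection $P$ onto $\mathrm{span}\{v\}$ reduces the inversion to an operator on $Q_1L^2$. The orthogonality relations \eqref{Q}, namely $Q_\alpha(x^kv)=0$ and $\langle x^kv,Q_\alpha f\rangle=0$, annihilate the separable polynomial kernels and lower the order of the surviving singularity by one power of $\lambda$ at each stage. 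Continuing with $S=Q_2$ and then $S=Q_3$ removes the next singular contributions, and the three cases of the theorem correspond exactly to how far this reduction must proceed before the remaining Schur complement is invertible: in the regular case $Q_2^0L^2=\{0\}$ and no negative powers survive, giving \eqref{lem_M_01}; a nontrivial $Q_2^0L^2$ leaves a single $\lambda^{-1}Q_2^0A_{-1}^1Q_2^0$ term, giving \eqref{lem_M_02}; and a nontrivial $Q_3L^2$ produces the full $\lambda^{-3},\lambda^{-2},\lambda^{-1}$ structure of \eqref{lem_M_03}.

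Finally I would group the resulting terms by power of $\lambda$ and by the projections flanking each operator, and absorb everything beyond order $\lambda^2$ into $\Gamma_3^k(\lambda)$. The bound \eqref{lem_Gambda} on $\Gamma_3^k$ and its derivative follows by differentiating the kernel expansion of $R_0^\pm(\lambda^4)$ in $\lambda$, which only improves the power of $\lambda$ in each remainder, and then propagating these estimates through the finitely many factors $(A+S)^{-1}$ and $B^{-1}$ via the resolvent identity. I expect the principal obstacle to be the bookkeeping of this iterated inversion: one must track exactly which moment functionals $\langle x^kv,\cdot\rangle$ are annihilated at each Feshbach step, verify that every intermediate Schur complement $B$ is genuinely invertible on the relevant subspace, and simultaneously control operator-norm remainders with one $\lambda$-derivative so that \eqref{lem_Gambda} holds. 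The invertibility at each stage is where the spectral hypotheses enter, through the classification of the resonance subspaces $Q_2^0L^2$ and $Q_3L^2$ and the absence of a zero eigenvalue; establishing the absolute boundedness of each coefficient under the moment conditions from \eqref{condition} is a further technical, if routine, ingredient.
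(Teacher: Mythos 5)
The paper contains no proof of this theorem to compare you against: it is imported verbatim from \cite[Theorem 1.8]{SWY22} and \cite[Lemma 2.2]{MWY22}, and Subsection 3.1 of the paper only recalls the statement together with the definitions of the projections $Q_1,Q_2,Q_2^0,Q_3$. Measured against the proof in those references, your outline follows essentially the same route: the Taylor expansion of $F_\pm$ (your coefficients are correct, including the cancellation of the $s$ and $s^5$ terms and the identification of the $\lambda^0$ coefficient $\tfrac{1}{12}|x-y|^3$ with the kernel of $G_0$), the symmetric form $M^\pm(\lambda)=U+vR_0^\pm(\lambda^4)v$, and an iterated Jensen--Nenciu inversion in which the orthogonality relations \eqref{Q} kill the finite-rank singular parts step by step, with the subspaces $Q_2^0L^2$ and $Q_3L^2$ deciding which negative powers of $\lambda$ survive. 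That is exactly the scheme of \cite{SWY22}, so your approach is the right one.

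Two cautions. First, a genuine (though repairable) slip: you assert that \emph{every} coefficient of a nonpositive power of $\lambda$ is a polynomial in $x-y$, hence separable. This holds only for the strictly negative powers, whose coefficients are $\tfrac{\pm i-1}{4}$ (constant) and $-\tfrac{1\pm i}{8}(x-y)^2$; the $\lambda^0$ coefficient $\tfrac{1}{12}|x-y|^3$ is neither a polynomial nor a separable kernel. Your own subsequent treatment absorbs it into $T_0=U+vG_0v$ rather than into the finite-rank singular part, so the architecture of the argument is unaffected, but the claim as written is false and should be restricted to the negative powers. Second, your proposal is a plan rather than a proof: the actual substance of \cite{SWY22} --- proving invertibility of each Schur complement on the relevant subspace under the resonance classification, extracting the precise thresholds $\mu>13,\,17,\,25$ in \eqref{condition} from the moment counting, verifying absolute boundedness of every coefficient, and propagating the derivative bound \eqref{lem_Gambda} through the finitely many iterated inverses --- is named as ``bookkeeping'' but not carried out, and it is precisely where the work lies. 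Since the paper itself treats this statement as a citation, your faithful reconstruction of the strategy is adequate for the purpose, provided the separability claim is corrected.
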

\begin{remark}
For convenience, we adopt the notations $Q_1,Q_{2},Q_2^0,Q_{3}$ introduced in \cite{MWY22}, rather than $Q,S_0,S_1,S_2$ used in \cite{SWY22}.
\end{remark}
\subsection{Low energy decay estimate}\label{low_estimate}

In this subsection, we are devoted to establishing the low energy decay bounds for Theorem \ref{main_theorem}, that is, the following Theorem \ref{main_theorem_low}.

By using Stone's formula,
\begin{align}
\nonumber
 (H&+m^2)^{\frac{\ell}{2}}e^{-i t \sqrt{H+m^2}}P_{ac}(H)\chi_1(H)    \\
 \label{stone_low_1}
&= \frac{2}{\pi i} \int_0^{\infty} e^{-it \sqrt{\lambda^4+m^2}}\lambda^3 \left(\lambda^4+m^2\right)^{\frac{\ell}{2}}\widetilde{\chi}_1(\lambda) [R_V^{+}(\lambda^4)-R_V^{-}(\lambda^4)](x,y)  d \lambda,
\end{align}
where $\widetilde{\chi}_1(\lambda)=\chi_1(\lambda^4)( \lambda>0)$ so that $\operatorname{supp}\widetilde{\chi}_1 \subset [0,(2\lambda_0)^{\frac{1}{4}}]$.
And note that
$$
\begin{aligned}
\frac{\sin (t \sqrt{H})}{\sqrt{H}} P_{a c}(H)\chi_1(H) =\frac{1}{2}\int_{-t}^{t}\cos(s\sqrt{H})P_{a c}(H)\chi_1(H)ds.
\end{aligned}
$$
Then it is sufficient to consider \eqref{stone_low_1} when $m=0,\ell=0$ and $m\neq0,\ell=-1,0.$
\begin{theorem}\label{main_theorem_low}
Let $H$ and $V$ satisfy the same conditions as in Theorem \ref{main_theorem}, then
$$
\begin{aligned}
& \left\|e^{-i t \sqrt {H}} P_{a c}(H) \chi_1(H)\right\|_{L^1 \rightarrow L^{\infty}} \lesssim|t|^{-\frac{1}{2}}, \\
& \left \|\frac{\sin (t \sqrt{H})}{\sqrt{H}} P_{a c}(H) \chi_1(H)\right \|_{L^1 \rightarrow L^{\infty}} \lesssim|t|^{\frac{1}{2}},
\end{aligned}
$$
 and for $m \neq 0$,
$$
\begin{aligned}
&\left\|\cos (t \sqrt{H+m^2}) P_{a c}(H) \chi_1(H)\right\|_{L^1 \rightarrow L^{\infty}} \lesssim \left\langle  m \right\rangle^{\frac{1}{2}}|t|^{-\frac{1}{4}}, \\
& \left \|\frac{\sin (t \sqrt{H+m^2})}{\sqrt{H+m^2}} P_{a c}(H) \chi_1(H)\right \|_{L^1 \rightarrow L^{\infty}}
\lesssim m^{-1} \left\langle  m \right\rangle^{\frac{1}{2}} |t|^{-\frac{1}{4}}.
\end{aligned}
$$
\end{theorem}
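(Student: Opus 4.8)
The plan is to bring every estimate under the single operator $(H+m^2)^{\frac{\ell}{2}}e^{-it\sqrt{H+m^2}}P_{ac}(H)\chi_1(H)$, whose kernel is furnished by Stone's formula \eqref{stone_low_1}: the cosine bounds are the case $\ell=0$ (for both $m=0$ and $m\neq0$), and, when $m\neq0$, the sine bound is the case $\ell=-1$. The $m=0$ sine estimate cannot be read off \eqref{stone-H-alpha} because of the singularity at $\lambda=0$; instead I would deduce it from the cosine bound via $\frac{\sin(t\sqrt H)}{\sqrt H}P_{ac}(H)\chi_1(H)=\tfrac12\int_{-t}^{t}\cos(s\sqrt H)P_{ac}(H)\chi_1(H)\,ds$, integrating the pointwise bound $|s|^{-1/2}$ over $[-t,t]$ to obtain $|t|^{1/2}$.

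For the remaining operators I would insert the symmetric resolvent identity \eqref{id-RV} into \eqref{stone_low_1}. The free part $R_0^\pm(\lambda^4)$ is exactly the contribution already controlled by Propositions \ref{free_case} and \ref{free_case_m}, so the entire issue is the perturbation $-R_0^\pm(\lambda^4)v\big(M^\pm(\lambda)\big)^{-1}vR_0^\pm(\lambda^4)$. Substituting the low-energy expansion of $\big(M^\pm(\lambda)\big)^{-1}$ from Theorem \ref{lem_M} (I would carry out the regular case \eqref{lem_M_01} in full; the first- and second-kind resonance expansions \eqref{lem_M_02}, \eqref{lem_M_03} are treated identically, the more singular powers $\lambda^{-1},\lambda^{-2},\lambda^{-3}$ being absorbed by the higher projections $Q_2^0,Q_3$), the perturbation breaks into a finite sum of kernels of the form
$$\int_0^\infty e^{-it\sqrt{\lambda^4+m^2}}\lambda^{6-\alpha-\beta+\gamma}(\lambda^4+m^2)^{\frac{\ell}{2}}\widetilde{\chi}_1(\lambda)\big[R_0^\pm(\lambda^4)vQ_\alpha BQ_\beta vR_0^\pm(\lambda^4)\big](x,y)\,d\lambda,$$
with $Q_0=\Id$, $B$ absolutely bounded, and $\gamma\in\{0,1\}$.

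The crux is to rewrite each such kernel in the oscillatory form governed by Lemma \ref{estimate-integral}. Using the Taylor expansion \eqref{free_resolvent} of the free resolvent together with the moment-vanishing relations \eqref{Q}, i.e. $Q_\alpha(x^kv)=0$ and $\langle x^kv,Q_\alpha f\rangle=0$ for $k\le\alpha-1$, the low-order Taylor terms of $R_0^\pm(\lambda^4)vQ_\alpha$ and $Q_\beta vR_0^\pm(\lambda^4)$ are annihilated, producing the gains $R_0^\pm(\lambda^4)vQ_\alpha=O(\lambda^{-3+\alpha})$ and $Q_\beta vR_0^\pm(\lambda^4)=O(\lambda^{-3+\beta})$. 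Against the explicit prefactor $\lambda^{6-\alpha-\beta+\gamma}$, the net power of $\lambda$ in the amplitude is only $\lambda^\gamma$, exactly matching the free case, while the oscillation is collected into a single factor $e^{\pm i\lambda(|x|+|y|)}$, so that $\Psi(z)=|x|+|y|$. After the dyadic rescaling $\lambda\mapsto 2^Ns$ this is precisely the integrand $\mathcal{K}_N^\pm(m,\ell,t,z)$ of Lemma \ref{estimate-integral}.

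The main obstacle I anticipate is verifying, uniformly in $x,y\in\R$, the symbol bounds $|\partial_s^k\Phi(2^Ns,m,\ell,z)|\lesssim 1$ for $k=0,1$ on the resulting amplitudes. These amplitudes carry polynomial weights $x^k,y^k$ generated by the Taylor remainders and the moment subtractions, and they must be absorbed by the decay of $v$ and the absolute boundedness of $Q_\alpha BQ_\beta$ in $\mathbb{AB}(L^2)$; this is exactly where the low-energy restriction $0<\lambda\le\lambda_0$ (keeping $\lambda|y|$ bounded against the decay of $v$) and the strong decay hypotheses $\mu>13,17,25$ of \eqref{condition} enter, since each higher projection $Q_2^0,Q_3$ in the resonance cases demands more vanishing moments and hence heavier weights. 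The $\Gamma_3^k(\lambda)$ remainders in Theorem \ref{lem_M}, being $O(\lambda^3)$ with the derivative control \eqref{lem_Gambda}, yield even more regular amplitudes and are handled the same way. Once the symbol bounds hold, Lemma \ref{estimate-integral} together with Remark \ref{estimate-integral_remark} (to account for $\ell=-1$) dominate each dyadic piece by $2^{(1+2\ell)N}\Theta_{N_0,N}(m,t)$ when $m=0$ and by $2^N m^\ell\langle m\rangle^{\frac12}(1+|t|2^{4N})^{-\frac12}$ when $m\neq0$; summing over $N$ via Lemma \ref{sum} produces $|t|^{-\frac12}$ for $m=0$ and $m^\ell\langle m\rangle^{\frac12}|t|^{-\frac14}$ for $m\neq0$, which are precisely the asserted bounds for $\ell=0$ (cosine) and $\ell=-1$ (sine).
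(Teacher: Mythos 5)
Your proposal is correct and follows essentially the same route as the paper: Stone's formula \eqref{stone_low_1} with $\ell=0$ and $\ell=-1$, the time-integral trick for the $m=0$ sine bound, the symmetric resolvent identity \eqref{id-RV} combined with the expansions of $\big(M^{\pm}(\lambda)\big)^{-1}$ from Theorem \ref{lem_M}, the Taylor/moment cancellation (the paper's Lemma \ref{lemma_projection}) reducing each perturbative kernel to net order $\lambda^{\gamma}$, and then Lemma \ref{estimate-integral} with Remark \ref{estimate-integral_remark} and Lemma \ref{sum} after Littlewood--Paley decomposition. The only organizational difference is that you take $\Psi(z)=|x|+|y|$ and fold the shifted phases into the amplitude for every term, whereas the paper does this only for the $\Gamma_3^k(\lambda)$ remainder (Proposition \ref{prop_Gamma}) and otherwise keeps $\Psi(z)=|x-\theta_1 u_1|+|y-\theta_2 u_2|$ with the auxiliary variables inside $z$ (Proposition \ref{prop_K}); both variants work, since the extra weights your choice generates are absorbed by the decay of $v$ in the low-energy regime, exactly as you indicate.
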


 We first give the following elementary lemma which plays an important role in making use of cancellations of operators $Q_{2}^0$ and $Q_{\alpha}, \alpha=1,2,3$.
\begin{lemma}[{\cite[Lemma 2.5]{SWY22}}]\label{taylor}
 Let $\lambda>0$ and $\operatorname{sgn}(x)$ be the sign function of $x$ on $\mathbb{R}$. Then
\begin{itemize}
\item[(i)] If $F(p) \in C^1(\mathbb{R})$, then for any $x, y \in \mathbb{R}$, we have
$$
F(\lambda|x-y|)=F(\lambda|x|)-\lambda y \int_0^1 \operatorname{sgn}(x-\theta y) F^{\prime}(\lambda|x-\theta y|) \mathrm{d} \theta.
$$
\item[(ii)] If $F(p) \in C^2(\mathbb{R})$ and $F^{\prime}(0)=0$, then for any $x, y \in \mathbb{R}$, we have
$$
F(\lambda|x-y|)=F(\lambda|x|)-\lambda y \operatorname{sgn}(x) F^{\prime}(\lambda|x|)
+\lambda^2 y^2 \int_0^1(1-\theta) F^{\prime \prime}(\lambda|x-\theta y|) \mathrm{d} \theta.
$$
\item[(iii)] If $F(p) \in C^3(\mathbb{R})$ and $F^{\prime}(0)=F^{\prime \prime}(0)=0$, then for any $x, y \in \mathbb{R}$, we have
$$
\begin{aligned}
F(\lambda|x-y|)= & F(\lambda|x|)-\lambda y \operatorname{sgn}(x) F^{\prime}(\lambda|x|)+\frac{\lambda^2 y^2}{2!} F^{\prime \prime}(\lambda|x|) \\
& -\frac{\lambda^3 y^3}{2!} \int_0^1(1-\theta)^2(\operatorname{sgn}(x-\theta y))^3 F^{(3)}(\lambda|x-\theta y|) \mathrm{d} \theta.
\end{aligned}
$$
\end{itemize}
\end{lemma}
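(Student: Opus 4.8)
The plan is to reduce all three identities to the one-dimensional Taylor formula with integral remainder applied to the auxiliary function $\varphi(\theta):=F(\lambda|x-\theta y|)$ on $\theta\in[0,1]$, noting that $\varphi(0)=F(\lambda|x|)$ and $\varphi(1)=F(\lambda|x-y|)$. Away from the exceptional value $\theta_\ast=x/y$ (relevant only when $x$ and $y$ share the same sign and $|x|<|y|$), the map $\theta\mapsto|x-\theta y|$ is smooth with $\frac{d}{d\theta}|x-\theta y|=-y\operatorname{sgn}(x-\theta y)$, so the chain rule gives
$$
\varphi'(\theta)=-\lambda y\,\operatorname{sgn}(x-\theta y)\,F'(\lambda|x-\theta y|),\qquad \varphi''(\theta)=\lambda^2y^2F''(\lambda|x-\theta y|),
$$
where $(\operatorname{sgn})^2=1$ removes the sign factor at second order, and one further differentiation yields $\varphi'''(\theta)=-\lambda^3y^3(\operatorname{sgn}(x-\theta y))^3F'''(\lambda|x-\theta y|)$ using $\operatorname{sgn}=(\operatorname{sgn})^3$. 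Part (i) is then exactly the fundamental theorem of calculus $\varphi(1)=\varphi(0)+\int_0^1\varphi'(\theta)\,d\theta$, while (ii) and (iii) are the expansions $\varphi(1)=\varphi(0)+\varphi'(0)+\int_0^1(1-\theta)\varphi''(\theta)\,d\theta$ and $\varphi(1)=\varphi(0)+\varphi'(0)+\tfrac12\varphi''(0)+\tfrac12\int_0^1(1-\theta)^2\varphi'''(\theta)\,d\theta$, after inserting the derivative formulas above and evaluating at $\theta=0$.

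First I would dispose of (i): since $F\in C^1$, the function $\varphi$ is Lipschitz, hence absolutely continuous, so the fundamental theorem of calculus applies and the single point $\theta_\ast$ is negligible in the integral, giving the stated formula with no further hypotheses. I would then obtain (ii) and (iii) either directly from the integral-remainder Taylor formula or, more transparently, by integrating by parts in the remainder of the previous identity. For (ii) one integrates by parts in $\int_0^1\operatorname{sgn}(x-\theta y)F'(\lambda|x-\theta y|)\,d\theta$ against $dv=d\theta$, choosing the antiderivative $v=\theta-1$ so that the $\theta=1$ boundary term drops, which produces the boundary contribution $\operatorname{sgn}(x)F'(\lambda|x|)$ together with a new remainder carrying $F''$. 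For (iii) one repeats this on $\int_0^1(1-\theta)F''(\lambda|x-\theta y|)\,d\theta$ with the antiderivative $v=-\tfrac12(1-\theta)^2$, yielding the boundary term $\tfrac12 F''(\lambda|x|)$ and the $F'''$ remainder with weight $(1-\theta)^2$.

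The one genuinely delicate point, and the step I expect to require the most care, is the behaviour at $\theta_\ast=x/y$, where $|x-\theta y|$ fails to be differentiable and $\operatorname{sgn}(x-\theta y)$ jumps; a careless differentiation of the sign function would produce a Dirac contribution. This is precisely where the vanishing hypotheses enter. Writing $\operatorname{sgn}(x-\theta y)F'(\lambda|x-\theta y|)=h(x-\theta y)$ with $h(u)=\operatorname{sgn}(u)F'(\lambda|u|)$, the assumption $F'(0)=0$ forces the one-sided limits of $h$ to agree, so that $h$ is continuous, indeed $C^1$, across $u=0$; this makes $\varphi\in C^1$ and guarantees that the integration by parts producing (ii) generates no spurious jump term. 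The stronger condition $F''(0)=0$ in (iii) comfortably secures the regularity of the intermediate functions needed for the third-order remainder to be valid, namely that $\varphi''$ is absolutely continuous across $\theta_\ast$. Once this regularity at $\theta_\ast$ is established, the remaining steps are the routine chain-rule and integration-by-parts manipulations indicated above, and the three identities follow verbatim.
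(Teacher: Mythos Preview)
Your proposal is correct. Note, however, that the paper does not actually prove this lemma: it is quoted verbatim from \cite[Lemma~2.5]{SWY22} and simply used as a black box. Your argument---defining $\varphi(\theta)=F(\lambda|x-\theta y|)$, applying the Taylor formula with integral remainder, and using the vanishing hypotheses $F'(0)=0$ (resp.\ $F''(0)=0$) to ensure continuity of $\varphi'$ (resp.\ $\varphi''$) across the corner point $\theta_\ast=x/y$---is the natural and standard way to prove the result, and is almost certainly what the cited reference does as well.
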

We will mainly use this lemma for
$
F_\pm(s)=\pm ie^{\pm i s}-e^{-s}
$.
Combined with \eqref{free_resolvent} and \eqref{Q}, Lemma \ref{taylor} implies the following formulas.

\begin{lemma}[{\cite[Lemma 2.5]{MWY22}}]
\label{lemma_projection}
Let $Q_1,Q_{2},Q_{2}^{0},Q_{3}$ be as in Subsection \ref{subsec:Q}, $\alpha=0,1,2,3$. Then
$$
\begin{aligned}
&[Q_{\alpha} vR_0^\pm(\lambda^4)f](x)\\
&=\frac{(-1)^\alpha \lambda^{-3+\alpha}}{4\cdot (\alpha-1)!} Q_{\alpha}\left(x^\alpha v\int \int_0^1(1-\theta)^{\alpha-1}\left(\sgn(y-\theta x)\right)^\alpha  F_\pm^{(\alpha)}(\lambda|y-\theta x|)f(y)d\theta dy\right),\\
&[R_0^\pm(\lambda^4)vQ_{\alpha}f](x)\\
&=\frac{(-1)^\alpha \lambda^{-3+\alpha}}{4\cdot (\alpha-1)!}\int \int_0^1(1-\theta)^{\alpha-1}\!\left(\sgn(x-\theta y)\right)^\alpha  F_\pm^{(\alpha)}(\lambda|x-\theta y|)y^\alpha v(y)(Q_{\alpha}f)(y)d\theta dy,
\end{aligned}
$$
here $F_\pm^{(\alpha)}$ denotes the $\alpha$-th derivative of $F_\pm$, and for simplicity we have used the convention that $(\sgn x)^2\equiv1$ for all $x\in \R$. Moreover, these estimates for $\alpha=2$ also hold with $Q_2$ replaced by $Q_2^0$.
\end{lemma}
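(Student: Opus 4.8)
The plan is to prove both identities by inserting the explicit free-resolvent kernel \eqref{free_resolvent}, Taylor-expanding $F_\pm(\lambda|x-y|)$ to order $\alpha$ via Lemma \ref{taylor}, and then exploiting the cancellation properties \eqref{Q} of $Q_\alpha$ to annihilate every monomial of the resulting Taylor polynomial, so that only the order-$\alpha$ integral remainder survives. The two identities are dual and use the two halves of \eqref{Q}: the pointwise annihilation $Q_\alpha(x^k v)\equiv 0$ for the first, and the pairing $\langle x^k v, Q_\alpha f\rangle=0$ for the second. Before starting I would record the relevant derivatives of $F_\pm(s)=\pm i e^{\pm is}-e^{-s}$, namely $F_\pm'(s)=-e^{\pm is}+e^{-s}$, whence $F_\pm'(0)=0$; this vanishing is exactly the hypothesis required to run Lemma \ref{taylor}(ii)--(iii).

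For the first identity I would write $[vR_0^\pm(\lambda^4)f](w)=\tfrac{v(w)}{4\lambda^3}\int F_\pm(\lambda|w-y|)f(y)\,dy$ and apply $Q_\alpha$ in the variable $w$. Expanding $F_\pm(\lambda|w-y|)=F_\pm(\lambda|y-w|)$ in powers of $w$ via Lemma \ref{taylor} (with $w$ now playing the role of the small variable), the Taylor polynomial contributes terms $c_k(y)\,w^k$ with $0\le k\le\alpha-1$; after multiplication by $v(w)$ and integration against $f$ these become constant multiples of $w^k v(w)$, each killed by $Q_\alpha$ by \eqref{Q}. Only the remainder remains, and collecting the prefactors $\tfrac{1}{4\lambda^3}$, $(-\lambda)^\alpha$ and $\tfrac{1}{(\alpha-1)!}$ yields precisely the stated expression with the power $\lambda^{-3+\alpha}$. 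The second identity is symmetric: in $[R_0^\pm(\lambda^4)vQ_\alpha f](x)=\tfrac{1}{4\lambda^3}\int F_\pm(\lambda|x-y|)v(y)(Q_\alpha f)(y)\,dy$ I expand $F_\pm(\lambda|x-y|)$ in powers of $y$, so the polynomial terms pair against $v\,(Q_\alpha f)$ as $\langle y^k v, Q_\alpha f\rangle=0$ for $k\le\alpha-1$, and again only the remainder is left. The assertion for $Q_2^0$ then follows verbatim from the $\alpha=2$ argument, since by \eqref{Q} the projection $Q_2^0$ enjoys the same cancellations $Q_2^0(x^k v)\equiv 0$ and $\langle x^k v, Q_2^0 f\rangle=0$ for $k=0,1$.

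The one point needing care, and the step I expect to be the main obstacle, is $\alpha=3$: the cited Taylor formula Lemma \ref{taylor}(iii) is stated under $F'(0)=F''(0)=0$, yet $F_\pm''(0)=\mp i-1\neq0$. I would resolve this by observing that the extra hypothesis $F''(0)=0$ is inessential for the integral-remainder formula. Indeed, the map $w\mapsto F_\pm(\lambda|y-w|)$ is $C^2$ with absolutely continuous second derivative precisely because $F_\pm'(0)=0$ suppresses the Dirac contribution in $\partial_w^2 F_\pm(\lambda|y-w|)=\lambda^2 F_\pm''(\lambda|y-w|)+2\lambda F_\pm'(0)\,\delta(y-w)$; hence the third-order expansion with remainder $F_\pm^{(3)}$ is valid for $F_\pm$. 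Moreover the quadratic term $\tfrac{\lambda^2 w^2}{2}F_\pm''(\lambda|y|)$ it produces is still a constant multiple of $w^2 v$ and is therefore annihilated by $Q_3$, so the nonvanishing of $F_\pm''(0)$ is harmless. Finally, the degenerate convention $\alpha=0$ (with $Q_0=\Id$) corresponds to performing no expansion and no cancellation, returning the plain kernel $vR_0^\pm(\lambda^4)=O(\lambda^{-3})$; this is the trivial base case and requires no argument.
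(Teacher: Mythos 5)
Your argument is correct, and its skeleton --- insert the kernel \eqref{free_resolvent}, Taylor-expand via Lemma \ref{taylor}, and annihilate the polynomial part through the cancellations \eqref{Q}, keeping only the integral remainder --- is exactly the paper's; for $\alpha=1,2$ (and the $Q_2^0$ variant) the two proofs coincide, and both treat $\alpha=0$ as a trivial convention. The genuine difference is at the step you flagged, $\alpha=3$, where $F_\pm''(0)=\mp i-1\neq0$ blocks a direct appeal to Lemma \ref{taylor}(iii). The paper resolves this algebraically: it writes $F_\pm(s)=\widetilde F_\pm(s)-\frac{1\pm i}{2}s^2$ with $\widetilde F_\pm(s):=F_\pm(s)+\frac{1\pm i}{2}s^2$, so that $\widetilde F_\pm'(0)=\widetilde F_\pm''(0)=0$ and $\widetilde F_\pm^{(3)}\equiv F_\pm^{(3)}$; Lemma \ref{taylor}(iii) then applies verbatim to $\widetilde F_\pm$, while the subtracted piece $\frac{1\pm i}{2}\lambda^2|x-y|^2$, being a polynomial of degree two in the variable on which $Q_3$ acts (or against which it pairs), vanishes identically by \eqref{Q}. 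You instead strengthen the Taylor lemma itself: your distributional computation is right that the only obstruction to a third-order expansion of $w\mapsto F_\pm(\lambda|y-w|)$ is the Dirac term carrying the factor $F_\pm'(0)$, so $F_\pm'(0)=0$ already makes this map $C^2$ with locally Lipschitz (hence absolutely continuous) second derivative, the integral-remainder formula therefore holds for $F_\pm$ itself, and the extra quadratic Taylor term $\frac{\lambda^2w^2}{2}F_\pm''(\lambda|y|)$ is harmless because $Q_3(w^2v)=0$ (respectively $\langle y^2v,Q_3f\rangle=0$ on the dual side). Both routes produce the same remainder with the same constant, since $\frac{1}{4\lambda^3}\cdot\bigl(-\frac{\lambda^3}{2}\bigr)=\frac{(-1)^3}{4\cdot 2!}$. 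What the paper's trick buys is economy: it never leaves the hypotheses of the quoted lemma. What your route buys is the sharper observation that the hypothesis $F''(0)=0$ in Lemma \ref{taylor}(iii) is superfluous once $F'(0)=0$ --- the projections in \eqref{Q} do not care about the quadratic term anyway --- at the price of having to justify Taylor's theorem for $C^2$ functions with absolutely continuous second derivative rather than citing the lemma as stated.
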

\begin{remark}
Note that the subscript $\alpha $ of $Q_{\alpha}$ coincides with the order of differentiation for $F_\pm$. This is why we use the notations $Q_1,Q_{2},Q_{2}^{0},Q_{3}$ instead of the original ones.
\end{remark}
For convenience, in the following we provide a brief proof of Lemma \ref{lemma_projection}, and refer to \cite{MWY22} for more details.
\begin{proof}[Proof of Lemma \ref{lemma_projection}]
Since $F_\pm'(0)=0$, we can apply Lemma \ref{taylor} to $F_\pm$, which gives
\begin{align*}
R_0^\pm(\lambda^4)(x,y)
&=\frac{F_\pm(\lambda |x|)}{4\lambda^3}-\frac{y}{4\lambda^2}\int_0^1\sgn(x-\theta y)F_\pm'(\lambda|x-\theta y|)d\theta\\
&=\frac{F_\pm(\lambda |x|)}{4\lambda^3}-\frac{y \sgn(x)F_\pm'(\lambda|x|)}{4\lambda^2}+\frac{y^2}{4\lambda}\int_0^1(1-\theta)F_\pm''(\lambda|x-\theta y|)d\theta.
\end{align*}
The cases $\alpha =1,2$ follow from this formula and \eqref{Q}. For the case $\alpha =3$, we write
$$4\lambda^3R_0^\pm(\lambda^4)(x,y)=F_\pm(\lambda|x-y|)=\widetilde F_\pm(\lambda|x-y|)-\frac{1\pm i}{2}\lambda^2|x-y|^2,$$ where $
\widetilde F_\pm(s)=F_\pm(s)+\frac{1\pm i}{2}s^2
$. Then we can write
\begin{align*}
Q_{3}vR_0^\pm(\lambda^4)f=\frac{1}{4\lambda^3}Q_{3}\left\{v\int\left(\widetilde F_\pm(\lambda|x-y|)-\frac{1\pm i}{2}\lambda^2|x-y|^2\right)f(y)dy\right\}.
\end{align*}
For the first term of the right hand side, since $\widetilde F_\pm'(0)=\widetilde F_\pm''(0)=0$ and $F_\pm^{(3)}\equiv\widetilde F_\pm^{(3)}$, we can apply Lemma \ref{taylor} (iii) and \eqref{Q} to compute
\begin{align*}
&Q_{3}\left(v \int \widetilde F_\pm(\lambda|x-y|)f(y)dy\right)\\
&=-\frac{\lambda^3}{2}Q_{3}\left(x^3v\int \int_0^1(1-\theta)^2\sgn(y-\theta x)F_\pm^{(3)}(\lambda|y-\theta x|) f(y)d\theta dy\right),
\end{align*}
while the second part involving $|x-y|^2$ vanishes identically by virtue of \eqref{Q}. The proof for $R_0^\pm(\lambda^4)vQ_{3}f$ is analogous.
\end{proof}
\subsection{Regular case}
We first consider the regular case. Substituting the expansion \eqref{lem_M_01} into \eqref{id-RV}, we obtain
$$
\begin{aligned}
R_V^{ \pm}(\lambda^4)= &R_0^\pm(\lambda^{4})
 -R_0^\pm(\lambda^{4})v\Big\{Q_{2}A_{0}^0Q_{2}+\lambda Q_1A_{1}^0Q_1+\lambda^2\left(Q_1A_{21}^0Q_1 \right. \\
 & \left.+Q_{2}A_{22}^0+A_{23}^0Q_{2}\right) +\Gamma_3^0(\lambda)\Big\}vR_0^\pm(\lambda^{4}).
\end{aligned}
$$
Inserting this formula into \eqref{stone_low_1}, and combining with Propositions \ref{free_case} and \ref{free_case_m}, in order to prove Theorem \ref{main_theorem_low}, it suffices to study the following integral kernels
\begin{align}
\label{formA_1}
 \int_0^{\infty} e^{-it \sqrt{\lambda^4+m^2}}\lambda^3 \left(\lambda^4+m^2\right)^{\frac{\ell}{2}}\widetilde{\chi}_1(\lambda) \left[R_0^\pm(\lambda^{4})vAvR_0^\pm(\lambda^{4})\right](x,y)  d \lambda,
\end{align}
with
$$
A=Q_{2}A_{0}^0Q_{2},~\lambda Q_1A_{1}^0Q_1,~\lambda^2 Q_1A_{21}^0Q_1,~\lambda^2Q_{2}A_{22}^0,~\lambda^2A_{23}^0Q_{2},~\Gamma_3^0(\lambda).
$$
We first deal with the cases with the first five operators,
while the last one is deferred to Proposition \ref{prop_Gamma}.

Define $Q_0:=\Id$. It can be confirmed that the corresponding integral kernels for the five operators can be written in the following form, denoted by $K^\pm(m,\ell,t, x, y)$:
\begin{align}
\label{K_form}
K^\pm=\int_0^\infty  e^{-i t \sqrt{\lambda^4+m^2}}\lambda^{6-\alpha-\beta}\lambda^\gamma (\lambda^4+m^2)^{\frac{\ell}{2}}
\widetilde{\chi}_1(\lambda)\left[R_0^\pm(\lambda^4)vQ_\alpha BQ_\beta vR_0^\pm(\lambda^4)\right](x,y)d\lambda,
\end{align}
with $Q_\alpha BQ_\beta \in \mathbb{AB}(L^2)$, $\gamma=0$ for $Q_\alpha BQ_\beta =Q_1A_{1}^0Q_1$ and $\gamma=1$ in other cases.

 Note that, since $|v(x)|\lesssim \<x\>^{-\mu/2}$ with $\mu>13$ by the assumption on $V$, $\<x\>^kv Q_\alpha BQ_\beta v\<x\>^k$ is an absolutely bounded integral operator for any $k\le 6$ at least, satisfying
\begin{align}
\label{vBv}
\int_{\R^2}\<x\>^k|(v Q_\alpha BQ_\beta v)(x,y)|\<y\>^kdxdy\lesssim \norm{\<x\>^{2k}V}_{L^1}<\infty,
\end{align}
where, denoting the integral kernel of $Q_\alpha BQ_\beta$ by $(Q_\alpha BQ_\beta)(x,y)$, we use the notation $$(vQ_\alpha BQ_\beta v)(x,y)=v(x)(Q_\alpha BQ_\beta)(x,y)v(y).$$
\begin{remark}
Note that the two projections $Q_{2},Q_{2}^{0}$ will play completely the same role in the following arguments. Hence, in what follows, we do not  distinguish them and use the same notation $Q_{2}$ to denote these operators $Q_{2},Q_{2}^{0}$.
\end{remark}
\begin{proposition}\label{prop_K}
Let $K^\pm(m,\ell,t, x, y)$ be defined by \eqref{K_form}. Then
$$
\sup _{x, y \in \mathbb{R}}\left| K^{\pm}(m,\ell,t, x, y)\right| \lesssim
\begin{cases}
  C_\ell |t|^{-\frac{1+2\ell}{2}}, & \text{if }  m=0,-\frac{1}{2}<\ell\leq0, \\
  m^{\ell} \left\langle  m \right\rangle^{\frac{1}{2}}|t|^{-\frac{1}{4}}, & \text{if } m\neq 0,\ell\leq0.
\end{cases}
$$
\end{proposition}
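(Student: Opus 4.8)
The plan is to exploit the cancellation encoded in the projections $Q_\alpha,Q_\beta$ via Lemma \ref{lemma_projection}, which trades the singular factor $\lambda^{-3}$ of each free resolvent for the milder $\lambda^{-3+\alpha}$ and $\lambda^{-3+\beta}$. Inserting the two representations of Lemma \ref{lemma_projection} for $R_0^\pm(\lambda^4)vQ_\alpha$ and $Q_\beta vR_0^\pm(\lambda^4)$ into \eqref{K_form}, the prefactor $\lambda^{6-\alpha-\beta+\gamma}$ exactly cancels the two gained powers, leaving the net weight $\lambda^\gamma(\lambda^4+m^2)^{\ell/2}$ in front of an iterated integral over the internal variables and the Taylor parameters $\theta_1,\theta_2$, tested against the absolutely bounded kernel of $vQ_\alpha BQ_\beta v$. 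The two resolvent factors contribute, through $F_\pm^{(\alpha)}$ and $F_\pm^{(\beta)}$, an oscillatory part (of the shape $e^{\pm i\lambda|x-\theta_1 u|}$ on the left and $e^{\pm i\lambda|y-\theta_2 w|}$ on the right, or $e^{\pm i\lambda|x-u|},e^{\pm i\lambda|y-w|}$ when the corresponding projection is $\Id$), carrying the \emph{same} sign as $R_0^\pm$, together with a harmless exponentially decaying part $e^{-\lambda|\cdot|}$. Writing each $F_\pm^{(\alpha)}$ as oscillatory plus decaying and expanding the product, every resulting term has total phase $e^{\pm i\lambda\Psi(z)}$ with a \emph{nonnegative} $\Psi(z)$ (a sum of the surviving distances, with $\Psi\equiv0$ when both factors are of decaying type), precisely the structure required by Lemma \ref{estimate-integral}.

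Next I would run the Littlewood--Paley decomposition $\widetilde\chi_1(\lambda)=\sum_{N}\widetilde\chi_1(\lambda)\varphi_0(2^{-N}\lambda)$, noting that each dyadic piece vanishes for $N\ge N'$ since $\operatorname{supp}\widetilde\chi_1\subset[0,(2\lambda_0)^{1/4}]$, and rescale $\lambda=2^Ns$. For $m\neq0$ I factor $m^\ell$ out of $(\lambda^4+m^2)^{\ell/2}$ and absorb $m^{-\ell}(2^{4N}s^4+m^2)^{\ell/2}$, the cutoff $\widetilde\chi_1(2^Ns)$, the decaying exponentials and the smooth amplitudes of $F_\pm^{(\alpha)},F_\pm^{(\beta)}$ into a single amplitude $\Phi(2^Ns,m,\ell,z)$; exactly as in Proposition \ref{free_case_m} one checks $|\partial_s^k\Phi|\lesssim1$ for $k=0,1$ uniformly in $N,m,z$ (using $\ell\le0$ so that $m^{-\ell}(2^{4N}s^4+m^2)^{\ell/2}\le1$, and $re^{-sr}\lesssim1$ for $s\ge\tfrac14$ to control the decaying parts and their derivatives). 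Each dyadic piece then equals $2^{N(1+\gamma)}m^\ell$ times an integral of the exact form $\mathcal K_N^\pm(m,\ell,t,z)$ with $h=\gamma$. For $m=0$ one instead has $(\lambda^4)^{\ell/2}=\lambda^{2\ell}$, producing an extra $s^{2\ell}$; by Remark \ref{estimate-integral_remark} this only costs the constant $2^{-4\ell}=C_\ell$, while the dyadic weight becomes $2^{N(1+\gamma+2\ell)}$.

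It then remains to estimate the sum uniformly in $z$. Bounding $\sup_{x,y}|K^\pm|$ by the $\langle\cdot\rangle$-weighted $L^1$ norm of the kernel of $vQ_\alpha BQ_\beta v$, which is finite by \eqref{vBv}, times $\sum_{N\le N'}(\text{dyadic weight})\sup_z|\mathcal K_N^\pm|$, I apply Lemma \ref{estimate-integral} and the summation Lemma \ref{sum}. For $m=0$, Lemma \ref{estimate-integral}(i) gives $|\mathcal K_N^\pm|\lesssim\Theta_{N_0,N}(0,t)$, and Lemma \ref{sum}(i) handles $\gamma=0$ directly (exponent $1+2\ell$) to produce $|t|^{-(1+2\ell)/2}$; for $\gamma=1$ the extra factor $2^N\lesssim1$ together with the two regimes of $\Theta_{N_0,N}$ is still dominated by $|t|^{-(1+2\ell)/2}$, so altogether $\sup|K^\pm|\lesssim C_\ell|t|^{-(1+2\ell)/2}$. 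For $m\neq0$, since $N\le N'$, I use Lemma \ref{estimate-integral}(iii), $|\mathcal K_N^\pm|\lesssim\langle m\rangle^{1/2}(1+|t|2^{4N})^{-1/2}$, and Lemma \ref{sum}(iii) to obtain $m^\ell\langle m\rangle^{1/2}|t|^{-1/4}$ for $\gamma=0$; for $\gamma=1$ the additional $2^N\lesssim1$ yields even faster decay for $|t|>1$ and a bounded contribution $\lesssim\langle m\rangle^{1/2}\le\langle m\rangle^{1/2}|t|^{-1/4}$ for $|t|\le1$, giving the second bound of the proposition.

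I expect the main obstacle to be the uniform amplitude estimate: one must verify $|\partial_s^k\Phi|\lesssim1$ genuinely uniformly in the mass $m$, the spatial variables $z$ and the dyadic index $N$, which forces a clean separation of the oscillatory and exponentially decaying components of each $F_\pm^{(\alpha)}$ and a check that neither the factor $(2^{4N}s^4+m^2)^{\ell/2}$ nor the cutoff spoils the first-derivative bound. The second delicate point is bookkeeping: organizing the product of the two resolvent representations so that the leftover powers of $\lambda$ collapse exactly to $\lambda^\gamma$ and the net phase is a single nonnegative $\Psi(z)$, so that Lemma \ref{estimate-integral} and Lemma \ref{sum} apply verbatim with the $z$-uniform constants supplied by \eqref{vBv}.
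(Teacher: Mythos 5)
Your proposal is correct and follows essentially the same route as the paper's proof: Lemma \ref{lemma_projection} to cancel the $\lambda^{6-\alpha-\beta}$ prefactor (including the $Q_0=\Id$ cases), Littlewood--Paley decomposition with rescaling $\lambda=2^N s$, amplitude bounds fed into Lemma \ref{estimate-integral} together with Remark \ref{estimate-integral_remark} for the $s^{2\ell}$ factor, Fubini against the weighted $L^1$ bound \eqref{vBv}, and summation via Lemma \ref{sum} (i) and (iii). The only cosmetic difference is that you split each $F_\pm^{(\alpha)}$ into oscillatory and decaying pieces, producing several integrals with different nonnegative phases $\Psi$, whereas the paper keeps the single phase $e^{\pm i\lambda(|X_1|+|Y_2|)}$ and absorbs $\mathcal{F}_\alpha^\pm(s)=e^{\mp is}F_\pm^{(\alpha)}(s)$ into the amplitude; both verifications of the hypotheses of Lemma \ref{estimate-integral} are equivalent.
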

\begin{proof}
We consider three cases (i) $\alpha,\beta\neq0$, (ii) $\beta=0$ and (iii) $\alpha=0$ separately.

{\it Case (i)}. We suppose $\alpha,\beta\neq0$. Using Lemma \ref{lemma_projection}, \
\begin{equation}\label{r}
\begin{split}
\nonumber
&\lambda^{6-\alpha-\beta}[R_0^\pm(\lambda^4)vQ_\alpha BQ_\beta vR_0^\pm(\lambda^4) ](x,y)\\
\nonumber
&=C_\alpha C_\beta \int_{[0,1]^2\times\R^2}(1-\theta_1)^{\alpha-1}(1-\theta_2)^{\beta-1}(\sgn (X_1))^\alpha
(\sgn (Y_2))^\beta F_\pm^{(\alpha)}(\lambda|X_1|)F_\pm^{(\beta)}(\lambda|Y_2|) \\
&\quad u_1^\alpha  (vQ_\alpha  BQ_\beta  v)(u_1,u_2) u_2^\beta d\theta_1 d\theta_2 du_1 du_2\\
&=\int_{\R^2\times[0,1]^2}M_{\alpha\beta}(X_1,Y_2,\Omega)F_\pm^{(\alpha)}(\lambda|X_1|)
F_\pm^{(\beta)}(\lambda|Y_2|) d\Omega,
\end{split}
\end{equation}
where $C_\alpha=(-1)^\alpha/4\cdot(\alpha-1)!, \Omega=(u_1,u_2,\theta_1,\theta_2), X_1=x-\theta_1u_1,Y_2=y-\theta_2u_2$ and $M_{\alpha\beta}(x,y,\Omega)$ is defined by
$$
\frac{(-1)^{\alpha+\beta}(1-\theta_1)^{\alpha-1}(1-\theta_2)^{\beta-1}(\sgn x)^\alpha (\sgn y)^\beta u_1^\alpha u_2^\beta (vQ_\alpha  BQ_\beta  v)(u_1,u_2)}{16(\alpha-1)!(\beta-1)!} .
$$
Then $K^\pm$ can  be rewriten  as
$$
\begin{aligned}
\int_{\R^2\times[0,1]^2}M_{\alpha\beta}(X_1,Y_2,\Omega)
\Big(\int_0^\infty  e^{-i t \sqrt{\lambda^4+m^2}}e^{\pm i\lambda(|X_1|+|Y_2|)} \lambda^{\gamma } \widetilde{\chi}_1(\lambda)f_{\alpha \beta}^{\pm}(\lambda,m,\ell,X_1,Y_2) d\lambda \Big)d\Omega,
\end{aligned}
$$
where
$
f_{\alpha\beta}^{\pm}(\lambda,m,\ell,X_1,Y_2):=\left(\lambda^4+m^2\right)^{\frac{\ell}{2}}
\mathcal{F}_{\alpha}^{\pm}\left(\lambda|X_1|\right)\mathcal{F}_{\beta}^{\pm}\left(\lambda|Y_2|\right), \ell\leq0$
and
$
\mathcal{F}_{\alpha}^{ \pm}\left(s\right)=e^{\mp i s}F_\pm^{(\alpha)}(s).
$

By Littlewood-Paley decomposition \eqref{varphi_0}, we can decompose $\widetilde{\chi}_1$ as follows:
\begin{align}\label{decomposition_low}
\widetilde{\chi}_1(\lambda)=\sum_{N=-\infty}^{N^\prime}\widetilde{\chi}_1(\lambda)
\varphi_0(2^{-N}\lambda),\quad \lambda>0,
\end{align}
where we may take $N^\prime=\left \lfloor  \frac{1}{4} \log_2\lambda_0+\frac{13}{4}\right \rfloor$ since $\operatorname{supp} \widetilde{\chi}_1 \subset [0,(2\lambda_0)^{\frac{1}{4}}], \operatorname{supp} \varphi_0(2^{-N}\lambda)\subset [2^{N-2},2^N]$.
Then it suffices to deal with the following integral kernels $K_N^\pm (m,\ell,t,x,y)$ ($K_N^\pm$ for short) for each $N \leq N^\prime$ and sum up:
$$
\begin{aligned}
\int_{\R^2\times[0,1]^2}M_{\alpha\beta}(X_1,Y_2,\Omega) \Big(\int_0^\infty  e^{-i t \sqrt{\lambda^4+m^2}}e^{\pm i\lambda(|X_1|+|Y_2|)}
\lambda^{\gamma } \widetilde{\chi}_1(\lambda) \varphi_0(2^{-N} \lambda) f_{\alpha \beta}^{\pm} d\lambda \Big)d\Omega.
\end{aligned}
$$
Utilizing the change of variable $\lambda \rightarrow 2^Ns$, we rewrite the integral inside the parentheses with respect to $\lambda$ as follows:
\begin{equation}\label{E_1}
2^{(1+\gamma )N}\int_0^\infty  e^{-i t \sqrt{2^{4 N}s^4+m^2}}e^{\pm i2^N s(|X_1|+|Y_2|)}
s^{\gamma }\widetilde{\chi}_1(2^N s)\varphi_0(s)f_{\alpha \beta}^{\pm}(2^N s,m,\ell,X_1,Y_2) ds.
\end{equation}
Note that $M_{\alpha\beta}$ and $f_{\alpha \beta}^{\pm}$ have following several properties:
\begin{itemize}
\item By \eqref{vBv}, $ M_{\alpha\beta}(x,y,\Omega) \in L^1(\R^2\times[0,1]^2;L^\infty(\R^2_{x,y}))$ and
\begin{equation}
\label{proposition_regular}
\int_{\R^2\times[0,1]^2}\sup_{x,y\in \R^2} |M_{\alpha\beta}(x,y,\Omega)| d\Omega\lesssim \norm{\<x\>^{2(\alpha+\beta)}V}_{L^1}.
\end{equation}
\item For any $s \in \operatorname{supp} \varphi_0(s)$ and $k=0,1$,
      \begin{equation}\label{m}
      \left|\partial_s^k \left(m^{-\ell} \left(2^{4N}s^4+m^2\right)^{\frac{\ell}{2}}\right)\right| \lesssim 1, ~ m \neq 0, \ell\leq0,
      \end{equation}
      \begin{equation}\label{e}
      \left|\partial_s^k \mathcal{F}_{\alpha}^{\pm}\left(2^{N}s|X_1|\right)\right| \lesssim 1 ,
      \quad \left|\partial_s^k \mathcal{F}_{\beta}^{\pm}\left(2^{N}s|Y_2|\right)\right| \lesssim 1.
      \end{equation}
\item By \eqref{m} and \eqref{e}, for any $s \in \operatorname{supp} \varphi_0(s)$ and $k=0,1$,
      \begin{equation}\label{f}
      \left|\partial_s^k f_{\alpha \beta}^{\pm}(2^N s,m,\ell,X_1,Y_2)\right| \lesssim
      \begin{cases}
        (2^N s)^{2\ell}, & \text{if } m=0,-\frac{1}{2}<\ell\leq0, \\
        m^{\ell}, & \text{if } m \neq 0,\ell\leq0 .
      \end{cases}
      \end{equation}
\end{itemize}
Let $z=\left(x, y, y_1, y_2, \theta_1, \theta_2\right)$ and $\Psi(z)=|X_1|+|Y_2|,$
$$
\begin{aligned}
\Phi^{ \pm}(2^N s, m,\ell, z)=
\begin{cases}
  (2^N s)^{-2\ell}\widetilde{\chi}_1(2^N s)f_{\alpha \beta}^{\pm}(2^N s,m,\ell,X_1,Y_2), & \text{if } m=0,-\frac{1}{2}<\ell\leq0, \\
  m^{-\ell}\widetilde{\chi}_1(2^N s)f_{\alpha \beta}^{\pm}(2^N s,m,\ell,X_1,Y_2), & \text{if } m \neq 0,\ell\leq0,
\end{cases}
\end{aligned}
$$
then by Lemma \ref{estimate-integral} and Remark \ref{estimate-integral_remark}, we obtain that the integral \eqref{E_1} is controlled by
$$
\begin{aligned}
\begin{cases}
  2^{-4\ell} 2^{(1+2\ell)N}\Theta_{N_0, N}(0,t), & \text{if } m=0,-\frac{1}{2}<\ell\leq 0, \\
  m^{\ell} \left\langle  m \right\rangle^{\frac{1}{2}} 2^{N}\left(1+|t|2^{4N}\right)^{-\frac{1}{2}}, & \text{if } m \neq 0,\ell\leq0.
\end{cases}
\end{aligned}
$$
Furthermore, by \eqref{proposition_regular},
$$
\begin{aligned}
\left|K_N^\pm (0,0,t,x,y)\right| & \lesssim  2^{-4\ell} 2^{(1+2\ell)N}\Theta_{N_0, N}(0,t)\int_{\R^2\times[0,1]^2}\sup_{x,y\in \R^2} |M_{\alpha\beta}(x,y,\Omega)| d\Omega \\
& \lesssim 2^{-4\ell} 2^{(1+2\ell)N}\Theta_{N_0, N}(0,t).
\end{aligned}
$$
Similarly, we can check that $\left|K_N^\pm (m,\ell,t,x,y)\right|$ is controlled by $$m^{\ell} \left\langle  m \right\rangle^{\frac{1}{2}} 2^{N}\left(1+|t|2^{4N}\right)^{-\frac{1}{2}},~m \neq 0,\ell\leq0.$$
Finally, by Lemma \ref{sum} (i) and (iii), we immediately get the desired conclusions for the case $\alpha,\beta\neq0$.

{\it Case (ii)}. Let $\beta=0$, $\alpha\neq0$. As in the case (i), $K^\pm$ can be written in the form
$$
\int_{\R^2\times[0,1]}M_{\alpha0}(X_1,\Omega_1)\Big(\int_0^\infty  e^{-i t \sqrt{\lambda^4+m^2}}e^{\pm i\lambda(|X_1|+|y-u_2|)} \lambda^{\gamma } \widetilde{\chi}_1(\lambda) f_{\alpha 0}^\pm(\lambda,m,\ell,X_1,y-u_2) d\lambda \Big)d\Omega_1,$$
where $\Omega_1=(u_1,u_2,\theta_1)$, $X_1=x-\theta_1u_1$ and
$$
\begin{aligned}
&f_{\alpha 0}^\pm(\lambda,m,\ell,X_1,y-u_2)=\left(\lambda^4+m^2\right)^{\frac{\ell}{2}}\mathcal{F}_{\alpha}^{ \pm}\left(\lambda|X_1|\right)\mathcal{F}_{0}^{\pm}\left(\lambda|y-u_2|\right),~\mathcal{F}_{0}^{ \pm}\left(s\right)=e^{\mp i s}F_\pm(s), \\
&M_{\alpha0}(x,\Omega_1)=\frac{(-1)^\alpha }{16(\alpha-1)!}(1-\theta_1)^{\alpha-1}(\sgn x)^\alpha u_1^\alpha (vQ_\alpha BQ_{0}v)(u_1,u_2).
\end{aligned}
$$
Similarly by decomposition \eqref{decomposition_low}, it suffices to study the following integral kernels for each $N \leq N^\prime$ and sum up:
$$
\begin{aligned}
 \int_{\R^2\times[0,1]}M_{\alpha0}(X_1,\Omega_1) \Big(\int_0^\infty  e^{-i t \sqrt{\lambda^4+m^2}}e^{\pm i\lambda(|X_1|+|y-u_2|)}
\lambda^{\gamma } \widetilde{\chi}_1(\lambda) \varphi_0(2^{-N} \lambda) f_{\alpha 0}^\pm d\lambda \Big)d\Omega_1.
\end{aligned}
$$
Note that $M_{\alpha0}$ and $f_{\alpha 0}^\pm$ satisfy the same estimates as \eqref{proposition_regular} and \eqref{f} for $M_{\alpha\beta}$ and $f_{\alpha \beta}^\pm$, respectively.
Then by the same argument as in case (i), we can obtain the same estimates for $K^\pm$ when $\alpha \neq 0, \beta=0$.

{\it Case (iii)}. Let $\alpha=0$, $\beta\neq0$. Again, we rewrite $K^{\pm}$ in the form
$$
\int_{\R^2\times[0,1]}M_{0\beta}(Y_2,\Omega_2) \left(\int_0^\infty e^{-i t \sqrt{\lambda^4+m^2}}e^{\pm i\lambda(|x-u_1|+|Y_2|)}\lambda^{\gamma } \widetilde{\chi}_1(\lambda) f_{0\beta}^\pm(\lambda,m,\ell,x-u_1,Y_2) d\lambda \right)d\Omega_2,
$$
where $\Omega_2=(u_1,u_2,\theta_2)$, $Y_2=y-\theta_2u_2$ and
$$
\begin{aligned}
&f_{0 \beta}^\pm(\lambda,m,\ell,x-u_1,Y_2)=\left(\lambda^4+m^2\right)^{\frac{\ell}{2}}\mathcal{F}_{0}^{ \pm}\left(\lambda|x-u_1|\right)\mathcal{F}_{\beta}^{\pm}\left(\lambda|Y_2|\right), \\
&M_{0\beta}(y,\Omega_2)=\frac{(-1)^\beta}{16(\beta-1)!}(1-\theta_2)^{\beta-1}(\sgn y)^\beta u_2^\beta (vQ_0 BQ_{\beta}v)(u_1,u_2).
\end{aligned}
$$
Then the same argument as above  implies the desired conclusions. This completes the proof.
\end{proof}

Now it remains to deal with the following integral kernels $K_3^{0,\pm}$, which are given by the following Proposition \ref{prop_Gamma}.
$$
\begin{aligned}
 K^{0,\pm}_{3}(m,\ell,t,x,y)=\int_0^\infty  e^{-i t \sqrt{\lambda^4+m^2}}\lambda^3 \left(\lambda^4+m^2\right)^{\frac{\ell}{2}} \widetilde{\chi}_1(\lambda)\left[R_0^\pm(\lambda^4)v\Gamma_3^0(\lambda)vR_0^{ \pm}(\lambda^4)\right](x,y)d\lambda.
\end{aligned}
$$
\begin{proposition}\label{prop_Gamma}
Let $K_3^{0,\pm}(m,\ell,t, x, y)$ be defined above, then
$$
\begin{aligned}
\sup _{x, y \in \mathbb{R}}\left| K_3^{0,\pm}(m,\ell,t, x, y)\right| \lesssim
\begin{cases}
  C_\ell |t|^{-\frac{1+2\ell}{2}}, & \text{if } m=0,-\frac{1}{2}<\ell\leq0, \\
  m^{\ell} \left\langle  m \right\rangle^{\frac{1}{2}}|t|^{-\frac{1}{4}}, & \text{if } m \neq 0,\ell\leq0.
\end{cases}
\end{aligned}
$$
\end{proposition}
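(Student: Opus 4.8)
The plan is to treat $K_3^{0,\pm}$ by the same oscillatory-integral scheme used in Proposition \ref{prop_K}, the single new feature being that the sandwiched operator $\Gamma_3^0(\lambda)$ depends on $\lambda$. First I would substitute the kernel \eqref{free_resolvent} into the two free resolvents and factor out their oscillations, writing
$$\big[R_0^\pm(\lambda^4)v\Gamma_3^0(\lambda)vR_0^\pm(\lambda^4)\big](x,y)=\frac{1}{16\lambda^6}\big\langle\,\overline{a^\pm_{x,\lambda}}\,,\ \Gamma_3^0(\lambda)\,b^\pm_{y,\lambda}\,\big\rangle,$$
where $a^\pm_{x,\lambda}(u_1)=e^{\mp i\lambda|x-u_1|}\mathcal F_0^\pm(\lambda|x-u_1|)v(u_1)$ and $b^\pm_{y,\lambda}(u_2)=e^{\pm i\lambda|u_2-y|}\mathcal F_0^\pm(\lambda|u_2-y|)v(u_2)$. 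The decisive bookkeeping is that the net factor $\lambda^{3}\cdot\lambda^{-6}=\lambda^{-3}$ coming from the prefactor and the two resolvents is exactly cancelled by the $O(\lambda^3)$ size of $\Gamma_3^0(\lambda)$ recorded in \eqref{lem_Gambda}; after the cancellation the $\lambda$-budget of the amplitude is the same as that of the $\gamma=0$ terms in \eqref{K_form}, which is what makes the bound match Proposition \ref{prop_K}.

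Next I would insert the Littlewood--Paley decomposition \eqref{decomposition_low}, rescale $\lambda=2^Ns$, and reduce to summing the pieces $K_{3,N}^{0,\pm}$ over $N\le N'$. The natural normalization is $\widetilde\Gamma_N(s):=2^{-3N}\Gamma_3^0(2^Ns)$, for which the two inequalities in \eqref{lem_Gambda} yield, uniformly in $s\in[\tfrac14,1]$ and $N\le N'$,
$$\big\|\widetilde\Gamma_N(s)\big\|_{L^2\to L^2}\lesssim s^3\lesssim1,\qquad \big\|\partial_s\widetilde\Gamma_N(s)\big\|_{L^2\to L^2}=2^{-2N}\big\|(\partial_\lambda\Gamma_3^0)(2^Ns)\big\|_{L^2\to L^2}\lesssim s^2\lesssim1.$$
Thus $\widetilde\Gamma_N$ is a bounded operator with a uniformly bounded first $s$-derivative, and the rescaling turns the overall constant into $2^{-2N}\cdot2^{3N}=2^N$, exactly the weight appearing in Proposition \ref{prop_K}.

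With the phase $e^{-it\sqrt{2^{4N}s^4+m^2}}e^{\pm i2^Ns(|x-u_1|+|u_2-y|)}$ extracted and $\Psi(z)=|x-u_1|+|u_2-y|\ge0$, I would place the remaining scalar factors $s^{-3}(2^{4N}s^4+m^2)^{\ell/2}\widetilde\chi_1(2^Ns)\varphi_0(s)\mathcal F_0^\pm\mathcal F_0^\pm$ together with the kernel $\widetilde\Gamma_N(s)(u_1,u_2)$ into the amplitude, now regarding $(u_1,u_2)$ as additional parameters in $z$. The estimates \eqref{m}--\eqref{f} already control the $C^1_s$-norm of the scalar factors, so the only point is that $\Phi$ is bounded not by $1$ but by $|\widetilde\Gamma_N(s)(u_1,u_2)|+|\partial_s\widetilde\Gamma_N(s)(u_1,u_2)|$; since the proof of Lemma \ref{estimate-integral} is linear in the amplitude, its conclusion holds with this $(u_1,u_2)$-dependent bound in place of $1$. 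Applying Lemma \ref{estimate-integral}(iii) (and Remark \ref{estimate-integral_remark} for the power of $m$), then integrating the resulting estimate against $|v(u_1)||v(u_2)|$ and using that $\Gamma_3^0(\lambda)$ and $\partial_\lambda\Gamma_3^0(\lambda)$ are absolutely bounded, so that $\int_{\R^2}|v(u_1)|\big(\sup_s|\widetilde\Gamma_N(s)(u_1,u_2)|+\sup_s|\partial_s\widetilde\Gamma_N(s)(u_1,u_2)|\big)|v(u_2)|\,du_1du_2\lesssim\|v\|_{L^2}^2\lesssim1$, gives
$$\sup_{x,y}\big|K_{3,N}^{0,\pm}(m,\ell,t,x,y)\big|\lesssim 2^{N}m^{\ell}\langle m\rangle^{\frac12}\big(1+|t|2^{4N}\big)^{-\frac12}\quad(m\neq0,\ \ell\le0),$$
and the analogous bound $2^{-4\ell}2^{(1+2\ell)N}\Theta_{N_0,N}(0,t)$ when $m=0,\ -\tfrac12<\ell\le0$ via Lemma \ref{estimate-integral}(i). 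Summing over $N\le N'$ by Lemma \ref{sum}(iii) and (i) respectively produces the stated rates $m^\ell\langle m\rangle^{1/2}|t|^{-1/4}$ and $C_\ell|t|^{-(1+2\ell)/2}$.

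The crux, and the only genuine difference from Proposition \ref{prop_K}, is the $\lambda$-dependence of $\Gamma_3^0(\lambda)$: because it is not $\lambda$-independent like the operators $Q_\alpha BQ_\beta$, it cannot be pulled out of the oscillatory $s$-integral and estimated separately afterwards, and when one differentiates the amplitude in $s$ the derivative may now land on $\widetilde\Gamma_N(s)$. The whole argument hinges on showing this produces no loss, which is exactly the content of the derivative bound $\lambda\|\partial_\lambda\Gamma_3^0(\lambda)\|_{L^2\to L^2}\lesssim\lambda^3$ in \eqref{lem_Gambda}: under the rescaling it becomes $\|\partial_s\widetilde\Gamma_N(s)\|_{L^2\to L^2}\lesssim1$, matching the $k=1$ hypothesis of Lemma \ref{estimate-integral}. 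I expect the bookkeeping of the absolute boundedness of both $\Gamma_3^0$ and $\partial_\lambda\Gamma_3^0$ against the weights $v\otimes v$ to be the most delicate point.
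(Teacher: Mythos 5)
Your overall scheme (Littlewood--Paley decomposition, rescaling $\lambda=2^Ns$, Lemma \ref{estimate-integral}, then summation via Lemma \ref{sum}(i),(iii)) is the same as the paper's, and your operator-norm bookkeeping is correct: $\|\widetilde\Gamma_N(s)\|_{L^2\to L^2}\lesssim s^3$, $\|\partial_s\widetilde\Gamma_N(s)\|_{L^2\to L^2}\lesssim s^2$, and the net weight $2^{-2N}\cdot2^{3N}=2^N$ all match. But there is a genuine gap at exactly the point you flagged as delicate. Your plan inserts the \emph{kernel} $\widetilde\Gamma_N(s)(u_1,u_2)$ into the amplitude, applies Lemma \ref{estimate-integral} for each fixed $(u_1,u_2)$ with $\Psi(z)=|x-u_1|+|u_2-y|$, and then integrates the resulting $(u_1,u_2)$-dependent bound against $|v(u_1)||v(u_2)|$. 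This requires quantitative, uniform-in-$N$ control of the absolute-value kernels of $\Gamma_3^0(\lambda)$ \emph{and} $\partial_\lambda\Gamma_3^0(\lambda)$ with the scalings $O(\lambda^3)$ and $O(\lambda^2)$, i.e.\ $\mathbb{AB}(L^2)$-versions of \eqref{lem_Gambda}. The paper supplies no such estimates: \eqref{lem_Gambda} is a $\mathbb{B}(L^2)$ bound only, and nothing quantitative (indeed nothing at all about $\partial_\lambda\Gamma_3^k$) is asserted in the absolutely-bounded sense. Consequently your key display
$$
\int_{\R^2}|v(u_1)|\Big(\sup_s|\widetilde\Gamma_N(s)(u_1,u_2)|+\sup_s|\partial_s\widetilde\Gamma_N(s)(u_1,u_2)|\Big)|v(u_2)|\,du_1du_2\lesssim\|v\|_{L^2}^2
$$
is unjustified; even granting absolute boundedness for each fixed $s$, Cauchy--Schwarz would produce $\|v\|_{L^2}^2$ times an $\mathbb{AB}$-norm, and the supremum sitting inside the $(u_1,u_2)$-integral needs a further argument (e.g.\ replacing $\sup_s$ by $\int_{1/4}^1ds$ and Fubini). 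This is precisely where $\Gamma_3^0(\lambda)$ differs from the $\lambda$-independent terms $Q_\alpha BQ_\beta$: for those, absolute boundedness is part of Theorem \ref{lem_M}, the kernel can be pulled outside the $s$-integral, and \eqref{vBv}, \eqref{proposition_regular} do the rest.

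The paper's proof is engineered to avoid any kernel-level information about $\Gamma_3^0$. It extracts only the phase $e^{\pm i\lambda(|x|+|y|)}$ (so $z=(x,y)$ and $\Psi(z)=|x|+|y|$), keeps the phase-corrected factors $e^{\pm i\lambda(|\ast-y|-|y|)}\mathcal{F}_0^{\pm}(\lambda|\ast-y|)$ and $e^{\mp i\lambda(|x-\cdot|-|x|)}\mathcal{F}_0^{\pm}(\lambda|x-\cdot|)$ \emph{inside} the pairing against $v\Gamma_3^0(2^Ns)v$, and bounds $|\partial_s^kE_{3,N}^{0,\pm}|\lesssim\|v(\cdot)\langle\cdot\rangle^{1-k}\|_{L^2}^2\,\|\partial_s^k\Gamma_3^0(2^Ns)\|_{L^2\to L^2}\lesssim 2^{3N}s^{3-k}$ by Cauchy--Schwarz, using only \eqref{lem_Gambda}; the price is that $s$-derivatives of the leftover phases cost weights $\langle u\rangle$, which the decay of $v$ absorbs (and $2^N\lesssim1$ for $N\le N'$ is used here). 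If you want to keep your route, you must first prove the $\mathbb{AB}$ analogues of \eqref{lem_Gambda}; otherwise, switching to the paper's pairing argument is the clean fix.
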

\begin{proof}
The proof is more involved than in the previous case since $\Gamma^0_3$ depends on $\lambda$.
By decomposition \eqref{decomposition_low}, it suffices to deal with the following integral kernels $K_{3, N}^{0, \pm}(m, \ell,t, x, y)$ for each $N \leq N^\prime$ and sum up:
$$
\begin{aligned}
& \int_0^{\infty} e^{-it \sqrt{\lambda^4+m^2}} \lambda^3 \left(\lambda^4+m^2\right)^{\frac{\ell}{2}} \widetilde{\chi}_1(\lambda)\varphi_0(2^{-N}\lambda)\left\langle[v \Gamma_3^0(\lambda) v]\left(R_0^{ \pm}(\lambda^4)(*, y)\right)(\cdot), R_0^{\mp}(\lambda^4)(x, \cdot)\right\rangle d\lambda.
\end{aligned}
$$
Note that
$$
R_0^{\pm}(\lambda^4)(x, y)=\frac{1}{4 \lambda^3}\left( \pm ie^{\pm i \lambda|x-y|}-e^{-\lambda|x-y|}\right)=\frac{e^{ \pm i \lambda|x-y|}}{4 \lambda^3} \mathcal{F}_{0}^{\pm}\left(\lambda|x-y|\right),
$$
thus, we have
$$
\begin{aligned}
& \left\langle [v \Gamma_3^0(\lambda) v]\left(R_0^{ \pm}(\lambda^4)(*, y)\right)(\cdot), R_0^{\mp}(\lambda^4)(x, \cdot)\right\rangle \\
&\quad = \frac{1}{16 \lambda^6}\left\langle[v \Gamma_3^0(\lambda) v]\left(e^{ \pm i \lambda|*-y|} \mathcal{F}_{0}^{\pm}\left(\lambda|*-y|\right)\right)(\cdot),\left(e^{\mp i \lambda|x-\cdot|} \mathcal{F}_{0}^{\pm}\left(\lambda|x-\cdot|\right)\right)\right\rangle \\
&\quad =\frac{1}{16 \lambda^6} e^{ \pm i \lambda|x|} e^{ \pm i \lambda|y|}\left\langle[v \Gamma_3^0(\lambda) v]\left(e^{ \pm i \lambda(|*-y|-|y|)} \mathcal{F}_{0}^{\pm}\left(\lambda|*-y|\right)\right)(\cdot),\left(e^{\mp i \lambda(|x-\cdot|-|x|)} \mathcal{F}_{0}^{\pm}\left(\lambda|x-\cdot|\right)\right)\right\rangle \\
&\quad :=\frac{1}{16 \lambda^6} e^{ \pm i \lambda(|x|+|y|)} E_{3,N}^{0,\pm}(\lambda,x,y).
\end{aligned}
$$
Utilizing the change of variable $\lambda \rightarrow 2^Ns$, $K_{3, N}^{0, \pm}$ can be rewriten as
$$
\begin{aligned}
\frac{2^{-2 N}}{16} \int_0^{\infty} e^{-i t \sqrt{2^{4N} s^4+m^2}} s^{-3} \left(2^{4N} s^4+m^2\right)^{\frac{\ell}{2}} \widetilde{\chi}_1(2^N s)\varphi_0(s) e^{ \pm i 2^N s(|x|+|y|)} E_{3, N}^{0, \pm}\left(2^N s, x, y\right) ds.
\end{aligned}
$$
Note that for any $s \in \operatorname{supp} \varphi_0(s), k=0,1$,
$$
\begin{aligned}
&\left|\partial_s^k\left(e^{ \pm i 2^N s(|*-y|-|y|)} \mathcal{F}_{0}^{\pm}\left(2^N s|*-y|\right)\right)\right| \lesssim \left\langle *\right\rangle^k, \\
&\left|\partial_s^k\left(e^{ \pm i 2^N s(|x-\cdot|-|x|)} \mathcal{F}_{0}^{\pm}\left(2^N s|x-\cdot|\right)\right)\right| \lesssim \left\langle\cdot\right\rangle^k,
\end{aligned}
$$
and
$
\left\|\partial_s^k\left(\Gamma_3^0(2^N s)\right)\right\|_{L^2 \rightarrow L^2} \lesssim 2^{3N} s^{3-k}
$ from \eqref{lem_Gambda},
then by H\" older inequality, we have
$$
\begin{aligned}
\left|\partial_s^k E_{3,N}^{0,\pm}\left(2^N s, x,y\right)\right| \lesssim \sum_{k=0}^1 \left\|v(\cdot)\langle\cdot\rangle^{1-k}\right\|_{L^2}^2 \left\|\partial_s^k\left(\Gamma_3^0(2^N s)\right)\right\|_{L^2 \rightarrow L^2} \lesssim 2^{3N} s^{3-k}.
\end{aligned}
$$
Let $z=\left(x, y\right)$ and $\Psi(z)=|x|+|y|,$
$$
\begin{aligned}
\Phi^{ \pm}(2^N s, m,\ell, z)=
\begin{cases}
  2^{-3N} \widetilde{\chi}_1(2^N s)E_{3, N}^{0, \pm}(2^N s, x, y), & \text{if } m=0,-\frac{1}{2}<\ell\leq0,\\
  2^{-3N}m^{-\ell}\left(2^{4N}s^4+m^2\right)^{\frac{\ell}{2}}\widetilde{\chi}_1(2^N s) E_{3, N}^{0, \pm}(2^N s, x, y), & \text{if } m \neq 0,\ell\leq0,
\end{cases}
\end{aligned}
$$
then by Lemma \ref{estimate-integral} and Remark \ref{estimate-integral_remark}, we obtain that
$$
\sup _{x, y \in \mathbb{R}}\left|K_{3, N}^{0, \pm}(m,\ell,t, x,y)\right| \lesssim
\begin{cases}
  2^{-4\ell} 2^{(1+2\ell)N}\Theta_{N_0, N}(0,t), & \text{if } m=0,-\frac{1}{2}<\ell\leq0, \\
  2^{ N} m^{\ell} \left\langle  m \right\rangle^{\frac{1}{2}}\left(1+|t|2^{4N}\right)^{-\frac{1}{2}}, & \text{if } m \neq0, \ell\leq 0.
\end{cases}
$$
Finally, by Lemma \ref{sum} (i) and (iii), we obtain the desired conclusions.
\end{proof}
With Propositions \ref{free_case}--\ref{free_case_m} and \ref{prop_K}--\ref{prop_Gamma} at hand, we immediately get the corresponding low energy decay estimates of
 $$\cos(t\sqrt{H})\ \text{and}\ \cos(t\sqrt{H+m^2}),~  \frac{\sin(t\sqrt{H+m^2})}{\sqrt{H+m^2}},~m\neq0,
 $$
 in Theorem \ref{main_theorem_low}. And note that
 $$
 \left\|\frac{\sin (t \sqrt{H})}{\sqrt{H}} P_{a c}(H)\chi_1(H) \right\|_{L^1 \rightarrow L^{\infty}} \lesssim \int_{-t}^{t}\left\|\cos(s\sqrt{H})P_{a c}(H)\chi_1(H)\right\|_{L^1 \rightarrow L^{\infty}}ds \lesssim |t|^{\frac{1}{2}}.
 $$
 Therefore, the proof of Theorem \ref{main_theorem_low} for the regular case is completed.
\subsection{The first kind of resonance}
Next we consider the case when zero is the first kind resonance of $H$ and $|V(x)|\lesssim \<x\>^{-\mu}$ with $\mu>17$.
Substituting the expansion \eqref{lem_M_02} into \eqref{id-RV}, we obtain
$$
\begin{aligned}
R_V^{ \pm}(\lambda^4)= &R_0^\pm(\lambda^{4})
 -R_0^\pm(\lambda^{4})v\Big\{\lambda^{-1}Q_{2}^{0}A_{-1}^1Q_{2}^{0}+Q_{2}A_{01}^1Q_1+Q_1A_{02}^1Q_{2}
 +\lambda\left(Q_1A_{11}^1Q_1
\right.\\
&\left. +Q_{2}A_{12}^1+A_{13}^1Q_{2}\right)
 +\lambda^2\left(Q_1A_{21}^1+A_{22}^1Q_1\right)+\Gamma_3^1(\lambda)\Big\}vR_0^\pm(\lambda^{4}).
\end{aligned}
$$
Inserting this formula into \eqref{stone_low_1}, and combining with Propositions \ref{free_case} and \ref{free_case_m}, we only need to consider the integral kernels \eqref{formA_1}
with
$$
A=\lambda^{-1} Q_{2}^{0}A_{-1}^1Q_{2}^{0},~Q_{2}A_{01}^1Q_1,~Q_1A_{02}^1Q_{2},~\lambda Q_{2}A_{12}^1,
~\lambda A_{13}^1Q_{2},~\lambda^2 Q_1A_{21}^1,~\lambda^2 A_{22}^1Q_1,
$$
while the cases with $A=\lambda Q_1A_{11}^1Q_1, \Gamma_3^1(\lambda)$ can be dealt with by the same method as in the regular case with $A=\lambda Q_1A_{1}^0 Q_1, \Gamma_3^0(\lambda)$.
And we can check that the corresponding kernels can be written in following form:
\begin{equation}\label{form_1}
\begin{split}
\int_0^\infty e^{-i t \sqrt{\lambda^4+m^2}}\lambda^{6-\alpha-\beta}\left(\lambda^4+m^2\right)^{\frac{\ell}{2}}
\widetilde{\chi}_1(\lambda)\left[R_0^\pm(\lambda^4)vQ_\alpha BQ_\beta vR_0^\pm(\lambda^4)\right](x,y)d\lambda,
\end{split}
\end{equation}
with $Q_\alpha BQ_\beta \in \mathbb{AB}(L^2)$.

Hence the same proof as that of Proposition \ref{prop_K} yields the same bounds for the above integral kernels, which implies the results of Theorem \ref{main_theorem_low} for the first kind resonance case.
\subsection{The second kind of resonance}
When zero is the second kind resonance of $H$ and $|V(x)|\lesssim \<x\>^{-\mu}$ with $\mu>25$.
Substituting the expansion \eqref{lem_M_03} into \eqref{id-RV}, we obtain
$$
\begin{aligned}
R_V^{ \pm}(\lambda^4)= &R_0^\pm(\lambda^{4})
 -R_0^\pm(\lambda^{4})v\Big\{\lambda^{-3}Q_{3}A_{-3}^2Q_{3}+\lambda^{-2}\left(Q_{3}A_{-21}^2Q_{2}
 +Q_{2}A_{-22}^2Q_{3}\right)+\lambda^{-1}\left(Q_{2}A_{-11}^2Q_{2}\right.\\
&\left.+Q_{3}A_{-12}^2Q_1+Q_1A_{-13}^2Q_{3}\right) +Q_{2}A^2_{01}Q_1+Q_1A^2_{02}Q_{2}+Q_{3}A^2_{03}+A^2_{04}Q_{3}+\lambda\left(Q_1A_{11}^2Q_1\right.\\
&\left.+Q_{2}A^2_{12}+A^2_{13}Q_{2}\right)
+\lambda^2\left(Q_1A^2_{21}+A^2_{22}Q_1\right)
+\Gamma_3^2(\lambda)\Big\}vR_0^\pm(\lambda^{4}).
\end{aligned}
$$
Combining with argument in the previous case, we only need to consider the integral kernels \eqref{formA_1} with
$$
\small
\begin{aligned}
A=\lambda^{-3}Q_{3}A_{-3}^2Q_{3},~\lambda^{-2}Q_{3}A_{-21}^2Q_{2}, ~\lambda^{-2}Q_{2}A_{-22}^2Q_{3}, ~\lambda^{-1}Q_{3}A_{-12}^2Q_1, ~\lambda^{-1}Q_1A_{-13}^2Q_{3},
~Q_{3}A^2_{03}, ~A^2_{04}Q_{3}.
\end{aligned}
$$
And it can be checked that the corresponding kernels can also be written in the form \eqref{form_1}.

Therefore, the same proof used in Proposition \ref{prop_K} leads to identical bounds for the integral kernels above, which implies the conclusions of Theorem \ref{main_theorem_low} for the second kind resonance case.
\section{High energy decay estimates}\label{sec:high-energy}
Here we give the proof of the high energy part of Theorem \ref{main_theorem}, that is, the following Theorem \ref{main_theorem_high}.
By using Stone's formula,
\begin{align}
\nonumber
 (H&+m^2)^{\frac{\ell}{2}}e^{-i t \sqrt{H+m^2}}P_{ac}(H)\chi_2(H)    \\
 \label{stone_high_1}
&= \frac{2}{\pi i} \int_0^{\infty} e^{-it \sqrt{\lambda^4+m^2}}\lambda^3 \left(\lambda^4+m^2\right)^{\frac{\ell}{2}}\widetilde{\chi}_2(\lambda) [R_V^{+}(\lambda^4)-R_V^{-}(\lambda^4)](x,y) d \lambda,
\end{align}
where $\widetilde{\chi}_2(\lambda)=\chi_2(\lambda^4)(\lambda>0)$ so that $\operatorname{supp}\widetilde{\chi}_2 \subset [\lambda_0^{\frac{1}{4}},+\infty)$.
And note that
$$
\begin{aligned}
\frac{\sin (t \sqrt{H})}{\sqrt{H}} P_{a c}(H)\chi_2(H) =\frac{1}{2}\int_{-t}^{t}\cos(s\sqrt{H})P_{a c}(H)\chi_2(H)ds.
\end{aligned}
$$
Then it is sufficient to consider \eqref{stone_high_1} when $m=0,\ell=0$ and $m\neq0,\ell=-1,0.$
\begin{theorem}\label{main_theorem_high}
Let $|V(x)| \lesssim(1+|x|)^{-2-}$. Assume that $H=\Delta^2+V$ has no positive embedded eigenvalue and $P_{ac}(H)$ denotes the projection onto the absolutely continuous spectrum of $H$, then
$$
\begin{aligned}
&\left\|e^{-it \sqrt{H}} P_{ac}(H) \chi_2(H)\right\|_{L^1 \rightarrow L^{\infty}} \lesssim|t|^{-\frac{1}{2}},\\
&\left\|\frac{\sin (t \sqrt{H})}{\sqrt{H}} P_{ac}(H) \chi_2(H)\right\|_{L^1 \rightarrow L^{\infty}} \lesssim|t|^{\frac{1}{2}},
\end{aligned}
$$
and for $m \neq 0$,
$$
\begin{aligned}
&\left\|\cos (t \sqrt{H+m^2}) P_{a c}(H) \chi_2(H)\right\|_{L^1 \rightarrow L^{\infty}} \lesssim \left\langle  m \right\rangle |t|^{-\frac{1}{2}},\\
&\left \|\frac{\sin (t \sqrt{H+m^2})}{\sqrt{H+m^2}} P_{a c}(H) \chi_2(H)\right \|_{L^1 \rightarrow L^{\infty}}
\lesssim (1+\frac{1}{m})|t|^{-\frac{1}{2}}.
\end{aligned}
$$
\end{theorem}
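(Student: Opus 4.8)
The plan is to follow the scheme of Section \ref{sec:low-energy} while exploiting the decisive simplification that on $\operatorname{supp}\widetilde\chi_2\subset[\lambda_0^{1/4},\infty)$ the free resolvent $R_0^\pm(\lambda^4)=O(\lambda^{-3})$ from \eqref{free_resolvent} is nonsingular, so no zero-energy expansion is required and the mild decay $\mu>2$ suffices. As already reduced above, it is enough to bound $\big\|(H+m^2)^{\ell/2}e^{-it\sqrt{H+m^2}}P_{ac}(H)\chi_2(H)\big\|_{L^1\to L^\infty}$ by $m^\ell\langle m\rangle|t|^{-1/2}$ for $m\neq0,\ \ell\le0$, and by $|t|^{-1/2}$ for $m=0,\ \ell=0$; specializing $\ell=0,-1$ then yields every assertion, since $m^{-1}\langle m\rangle=1+\tfrac1m$, while the $m=0$ sine bound $|t|^{1/2}$ follows from the identity $\frac{\sin(t\sqrt H)}{\sqrt H}P_{ac}(H)\chi_2(H)=\tfrac12\int_{-t}^t\cos(s\sqrt H)P_{ac}(H)\chi_2(H)\,ds$ together with $\int_{-t}^t|s|^{-1/2}\,ds\lesssim|t|^{1/2}$. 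Substituting the iterated identity $R_V^\pm=R_0^\pm-R_0^\pm VR_0^\pm+R_0^\pm VR_V^\pm VR_0^\pm$ into \eqref{stone_high_1} splits the kernel into a free term, a Born term, and a remainder term.

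The free term is precisely the $\chi_2$-localized operator already estimated in Propositions \ref{free_case} and \ref{free_case_m}. For the Born term $-R_0^\pm VR_0^\pm$ I would expand both free resolvents by \eqref{free_resolvent}, factor $F_\pm(\lambda r)=e^{\pm i\lambda r}\mathcal F_0^\pm(\lambda r)$ to expose the phase $e^{\pm i\lambda(|x-z|+|z-y|)}$, apply the Littlewood--Paley decomposition over $N\ge N''$, and rescale $\lambda\to2^Ns$. Each derivative $\partial_s$ falling on $\mathcal F_0^\pm(2^Ns\,r)$ produces a factor $2^N r\,e^{-2^Ns r}\lesssim1$ on $\operatorname{supp}\varphi_0$ (as $\mathcal F_0^\pm(\sigma)=\pm i-e^{-(1\pm i)\sigma}$ has exponentially decaying derivative), so the amplitude obeys the hypotheses of Lemma \ref{estimate-integral} with $\Psi=|x-z|+|z-y|$. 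Applying Lemma \ref{estimate-integral}(ii), integrating the resulting bound against $|V(z)|\,dz$ (finite because $\mu>2$ forces $V\in L^1$), and summing over $N$ via Lemma \ref{sum}(ii) produces the claimed $m^\ell\langle m\rangle|t|^{-1/2}$; the extra power $\lambda^{-1}$ carried by the second free resolvent only improves the summability in $N$.

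The remainder term $R_0^\pm VR_V^\pm VR_0^\pm$ is the crux, since its middle factor is not explicit. Here I would mimic the treatment of $\Gamma_3^0(\lambda)$ in Proposition \ref{prop_Gamma}: pull the external phase $e^{\pm i\lambda(|x|+|y|)}$ out of the two outer free resolvents, write the remaining kernel as an $L^2$-pairing of the form $\big\langle [vR_V^\pm(\lambda^4)v]\,g_y^\pm,\,h_x^\pm\big\rangle$ with $g_y^\pm,h_x^\pm$ bounded residual factors, rescale $\lambda\to2^Ns$, and verify that the amplitude and its first $s$-derivative satisfy the hypotheses of Lemma \ref{estimate-integral}. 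This step requires the high-energy limiting absorption estimates for $H$, namely that uniformly for $\lambda\ge\lambda_0^{1/4}$,
$$
\big\|vR_V^\pm(\lambda^4)v\big\|_{L^2\to L^2}+\lambda\big\|\partial_\lambda\big(vR_V^\pm(\lambda^4)v\big)\big\|_{L^2\to L^2}\lesssim\lambda^{-3},
$$
which follow from the limiting absorption principle together with the absence of positive embedded eigenvalues, and for which the decay $\mu>2$ is exactly what guarantees the convergence of the weighted pairings ($v\in L^2_{1/2+}$). Establishing and correctly exploiting these uniform resolvent bounds with one-derivative control up to the continuous spectrum is the main obstacle, and is also the point where the spectral hypotheses of the theorem are essential. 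Once they are in hand, the oscillatory-integral and summation steps reproduce the Born-term analysis verbatim, Lemma \ref{estimate-integral}(ii) and Lemma \ref{sum}(ii) close the estimate, and the proof of Theorem \ref{main_theorem_high} is complete.
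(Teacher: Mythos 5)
Your architecture coincides with the paper's: Stone's formula, the iterated identity $R_V^\pm=R_0^\pm-R_0^\pm VR_0^\pm+R_0^\pm VR_V^\pm VR_0^\pm$, the free term from Propositions \ref{free_case}--\ref{free_case_m}, the Born term via Littlewood--Paley, Lemma \ref{estimate-integral} and Lemma \ref{sum}, and the remainder term as an $L^2$-pairing around $R_V^\pm$ with the external phases pulled out. However, the step you yourself call the crux contains a genuine error: the uniform bound you assert,
$$
\big\|vR_V^\pm(\lambda^4)v\big\|_{L^2\to L^2}+\lambda\big\|\partial_\lambda\big(vR_V^\pm(\lambda^4)v\big)\big\|_{L^2\to L^2}\lesssim\lambda^{-3},
$$
is false --- it fails already for $V=0$. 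Differentiating $R_0^\pm(\lambda^4)(x,y)=F_\pm(\lambda|x-y|)/(4\lambda^3)$ in $\lambda$ produces the term $|x-y|F_\pm'(\lambda|x-y|)/(4\lambda^3)$, whose oscillating part does not decay in $\lambda$; the factor $|x-y|$ is absorbed by weights (or by the sandwiching functions) but no extra power $\lambda^{-1}$ is gained, so $\lambda\|\partial_\lambda(vR_0^\pm(\lambda^4)v)\|\sim\lambda^{-2}$, not $\lambda^{-3}$. You appear to have transplanted the low-energy pattern \eqref{lem_Gambda}, where each derivative of $\Gamma_3^k(\lambda)$ costs exactly one power of $\lambda$, to the high-energy regime, where it does not hold. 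The correct input is the high-energy limiting absorption principle of Lemma \ref{RRR} ({\cite[Theorem 2.23]{FSY}}): $\|\partial_\lambda^k R_V^\pm(\lambda)\|_{L^2_\sigma\to L^2_{-\sigma}}=O(\lambda^{-3(k+1)/4})$ for $\sigma>k+\tfrac12$, which after substituting $\lambda\mapsto\lambda^4$ gives $\|\partial_\lambda^k\big(R_V^\pm(\lambda^4)\big)\|\lesssim\lambda^{-3}$ for $k=0,1$ --- one full power weaker than your claim at $k=1$. Fortunately this weaker, correct bound still closes the argument, exactly as in the paper's Proposition \ref{high_2}: after rescaling $\lambda\to2^Ns$ the derivative loss $2^{kN}$ is absorbed by $(2^Ns)^{-3}$, leaving amplitude derivatives $\lesssim 2^{-2N}\lesssim1$, so Lemma \ref{estimate-integral} still applies and Lemma \ref{sum} sums the result.

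There is a second, related defect in your formulation: sandwiching with $v=\sqrt{|V|}$ and using unweighted $L^2\to L^2$ norms cannot be run under the hypothesis $\mu>2$. The $k=1$ case of Lemma \ref{RRR} requires weights $\sigma>\tfrac32$, and when $|V|\sim\langle x\rangle^{-2-}$ one has $v\sim\langle x\rangle^{-1-}$, so $v\langle\cdot\rangle^{3/2}$ lies in neither $L^2$ nor $L^\infty$; pushing the weights onto $v$ would force $\mu>3$ at least (and $v\,\partial_s g$ costs a further weight). This is precisely why the paper writes the remainder pairing as $\langle VR_V^\pm Vg,h\rangle$ with the full potential $V$ on both sides and bounded residual factors $g,h$: the required norms $\|V\langle\cdot\rangle^{\sigma+1-k}\|_{L^2}$, $\sigma>k+\tfrac12$, $k=0,1$, are finite exactly when $\mu>2$. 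Your plan is repairable --- replace your asserted estimate by Lemma \ref{RRR}, keep $V$ rather than $v$ around $R_V^\pm$, and redo the derivative bookkeeping as in Proposition \ref{high_2} --- but as written the key estimate is wrong, and the stated decay hypothesis would not support even a corrected version of your $v$-sandwiched formulation.
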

In this section, we use the following resolvent identity:
$$
R_V^{\pm}(\lambda^4)=R_0^{\pm}(\lambda^4)-R_0^{ \pm}(\lambda^4)VR_0^{\pm}(\lambda^4)+R_0^{ \pm}(\lambda^4)VR_V^{\pm}(\lambda^4)VR_0^{ \pm}(\lambda^4).
$$
Substituting this formula into \eqref{stone_high_1}, and combining with Propositions \ref{free_case} and \ref{free_case_m}, it is sufficient to deal with the following integral kernels, which are given by the following Propositions \ref{high_1} and \ref{high_2}, respectively:
$$
\begin{aligned}
\nonumber
& L_{1}^{\pm}(m,\ell,t,x,y):=\int_0^{\infty} e^{-i t \sqrt{\lambda^4+m^2}} \lambda^3 \left(\lambda^4+m^2\right)^{\frac{\ell}{2}} \widetilde{\chi}_2(\lambda)[R_0^{ \pm}(\lambda^4)VR_0^{ \pm}(\lambda^4)](x, y) d\lambda, \\
&L_{2}^{\pm}(m,\ell,t,x,y):=\int_0^{\infty} e^{-i t \sqrt{\lambda^4+m^2}} \lambda^3 \left(\lambda^4+m^2\right)^{\frac{\ell}{2}} \widetilde{\chi}_2(\lambda)[R_0^{\pm}(\lambda^4)VR_V^{\pm}(\lambda^4)VR_0^{\pm}(\lambda^4)](x, y) d\lambda.
\end{aligned}
$$
Moreover, by Littlewood-Paley decomposition \eqref{varphi_0}, we can decompose $\widetilde{\chi}_i$ as follows:
\begin{equation}
\label{decomposition_high}
\widetilde{\chi}_2(\lambda)=\sum_{N=N^{\prime\prime}}^{+\infty}\widetilde{\chi}_2(\lambda)
\varphi_0(2^{-N}\lambda),\quad \lambda>0,
\end{equation}
where we may take $N^{\prime\prime}=\left \lfloor\frac{1}{4} \log_2\lambda_0\right \rfloor$ since $ \operatorname{supp} \widetilde{\chi}_2\subset [\lambda_0^{\frac{1}{4}},+\infty), \operatorname{supp} \varphi_0(2^{-N}\lambda) \subset [2^{N-2},2^N]$.
\begin{proposition}\label{high_1}
Assume that $|V(x)| \lesssim \left \langle  x\right \rangle^{-1-}$. Then
$$
\begin{aligned}
\sup _{x, y \in \mathbb{R}}\left|L_{1}^{\pm}(m,\ell,t,x,y)\right| \lesssim
\begin{cases}
  C_\ell |t|^{-\frac{1+2\ell}{2}}, & \text{if } m=0,-\frac{1}{2}<\ell\leq0, \\
  m^{\ell} \left\langle  m \right\rangle |t|^{-\frac{1}{2}}, & \text{if } m\neq0,\ell\leq0.
\end{cases}
\end{aligned}
$$
\end{proposition}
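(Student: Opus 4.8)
The plan is to reduce $L_1^\pm$ to a superposition, over the potential variable $u$ and the dyadic scales $N$, of the oscillatory integrals $\mathcal{K}_N^\pm$ already controlled by Lemma \ref{estimate-integral}, and then to sum using Lemma \ref{sum}, in close parallel with the free estimate of Proposition \ref{free_case_m} and the regular low-energy argument of Proposition \ref{prop_K}. The only genuinely new features are the extra $u$-integration against $V$ and the more singular power of $\lambda$ produced by \emph{two} free resolvents.

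First I would expand the composed kernel. Using \eqref{free_resolvent} and writing $\mathcal{F}_0^\pm(s)=e^{\mp is}F_\pm(s)$ (which is smooth with $|\partial_s^k \mathcal{F}_0^\pm(s)|\lesssim 1$), one has
$$[R_0^\pm(\lambda^4)VR_0^\pm(\lambda^4)](x,y)=\frac{1}{16\lambda^6}\int_\R e^{\pm i\lambda(|x-u|+|u-y|)}\,\mathcal{F}_0^\pm(\lambda|x-u|)\,V(u)\,\mathcal{F}_0^\pm(\lambda|u-y|)\,du.$$
Inserting this into the definition of $L_1^\pm$, the factor $\lambda^3\cdot\lambda^{-6}=\lambda^{-3}$ appears, and Fubini gives
$$L_1^\pm=\frac{1}{16}\int_\R V(u)\Big(\int_0^\infty e^{-it\sqrt{\lambda^4+m^2}}e^{\pm i\lambda\Psi}\lambda^{-3}(\lambda^4+m^2)^{\frac{\ell}{2}}\widetilde{\chi}_2(\lambda)\mathcal{F}_0^\pm(\lambda|x-u|)\mathcal{F}_0^\pm(\lambda|u-y|)\,d\lambda\Big)du,$$
where $z=(x,y,u)$ and $\Psi=\Psi(z)=|x-u|+|u-y|\ge 0$. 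Applying the Littlewood--Paley decomposition \eqref{decomposition_high} and the scaling $\lambda\to 2^N s$, the inner integral becomes a sum over $N\ge N''$ of $2^{-2N}m^{\ell}$ (for $m\neq0$), resp. $2^{-4\ell}2^{(-2+2\ell)N}$ after extracting $\lambda^{2\ell}=2^{2\ell N}s^{2\ell}$ (for $m=0$, via Remark \ref{estimate-integral_remark}), times an integral of the form $\mathcal{K}_N^\pm(m,\ell,t,z)$ with $h=-3$ and amplitude
$$\Phi^\pm(2^Ns,m,\ell,z)=m^{-\ell}(2^{4N}s^4+m^2)^{\frac{\ell}{2}}\widetilde{\chi}_2(2^Ns)\,\mathcal{F}_0^\pm(2^Ns|x-u|)\,\mathcal{F}_0^\pm(2^Ns|u-y|).$$
By \eqref{m}--\eqref{e} this amplitude satisfies $|\partial_s^k\Phi^\pm|\lesssim 1$ for $k=0,1$, uniformly in $N,m,\ell,z$, so the hypotheses of Lemma \ref{estimate-integral} are met.

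Since $N\ge N''$, I would invoke Lemma \ref{estimate-integral}(ii) for $m\neq0$ (and Lemma \ref{estimate-integral}(i) together with Remark \ref{estimate-integral_remark} for $m=0$), yielding $|\mathcal{K}_N^\pm(m,\ell,t,z)|\lesssim \Theta_{N_0,N}(m,t)$ with $N_0=\lfloor\log_2(\Psi(z)/|t|)\rfloor$. Hence, for $m\neq0$,
$$|L_1^\pm|\lesssim m^{\ell}\int_\R|V(u)|\sum_{N\ge N''}2^{-2N}\Theta_{N_0,N}(m,t)\,du,$$
and similarly with $m^\ell$ replaced by $2^{-4\ell}$ and $2^{-2N}$ by $2^{(-2+2\ell)N}$ when $m=0$. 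The decisive point for the summation is that $N$ is bounded below by the fixed integer $N''$, so $2^{-2N}=2^{-3N}2^{N}\le 2^{-3N''}2^{N}$ (and $2^{(-2+2\ell)N}\le 2^{-3N''}2^{(1+2\ell)N}$). This reduces the $N$-sum \emph{exactly} to the ones estimated in Lemma \ref{sum}: part (ii) gives $\sum_{N\ge N''}2^{-2N}\Theta_{N_0,N}(m,t)\lesssim\langle m\rangle|t|^{-\frac12}$, and part (i) gives $\sum_{N\ge N''}2^{(-2+2\ell)N}\Theta_{N_0,N}(0,t)\lesssim_\ell|t|^{-\frac{1+2\ell}{2}}$ for $-\tfrac12<\ell\le0$. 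Because these bounds are uniform in $N_0$, they are uniform in $u$; and since $|V(x)|\lesssim\langle x\rangle^{-1-}$ forces $V\in L^1(\R)$, the $u$-integration contributes only $\|V\|_{L^1}<\infty$. Restoring the $m^\ell$ (resp. $2^{-4\ell}=:C_\ell$) prefactor yields precisely the two stated estimates.

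The routine part is the verification that $\Phi^\pm$ and its first derivative are uniformly bounded, which is identical to \eqref{m}--\eqref{e} in the proof of Proposition \ref{prop_K}. The main, and essentially only, subtlety is that the double resolvent produces the more singular weight $\lambda^{-3}$, i.e. the prefactor $2^{-2N}$, which is more negative than the $2^N$ of the free term, so Lemma \ref{sum} cannot be applied verbatim; the remedy is exactly that the high-energy cutoff confines the summation to $N\ge N''$, where the surplus factor $2^{-3N}$ is harmless and is absorbed into the fixed constant $2^{-3N''}$. A secondary, milder point is that $\Theta_{N_0,N}$ depends on $z$ through $N_0$, but the $N_0$-uniformity of Lemma \ref{sum} lets the $u$-integration decouple so that only the $L^1$-norm of $V$ enters.
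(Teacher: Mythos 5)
Your proposal is correct and follows essentially the same route as the paper's own proof: expand the kernel of $R_0^{\pm}(\lambda^4)VR_0^{\pm}(\lambda^4)$ via \eqref{free_resolvent}, apply the decomposition \eqref{decomposition_high} and the rescaling $\lambda\to 2^N s$, invoke Lemma \ref{estimate-integral} together with Remark \ref{estimate-integral_remark} with $\Psi(z)=|x-y_1|+|y-y_1|$, and then sum via Lemma \ref{sum} after integrating $|V|$. Your explicit remark that the surplus factor $2^{-3N}\le 2^{-3N''}$ (coming from the two resolvents) is absorbed because the high-energy cutoff restricts to $N\ge N''$ is exactly the step the paper leaves implicit in its final appeal to Lemma \ref{sum}, so it is a welcome clarification rather than a deviation.
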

\begin{proof}
 By decomposition \eqref{decomposition_high}, it is enough to analyze the following integrals $L_{1,N}^{\pm}(m,\ell,t,x,y)$ for each $N \geq N^{\prime\prime}$ and sum up:
$$
\begin{aligned}
\int_0^{\infty} e^{-it\sqrt{\lambda^4+m^2}}\lambda^3 \left(\lambda^4+m^2\right)^{\frac{\ell}{2}} \widetilde{\chi}_2(\lambda)\varphi_0(2^{-N}\lambda) \left[R_0^{ \pm}(\lambda^4)VR_0^{\pm}(\lambda^4)\right](x,y) d\lambda.
\end{aligned}
$$
Indeed,
$$
R_0^{\pm}(\lambda^4)(x, y)=\frac{1}{4 \lambda^3}\left( \pm ie^{\pm i \lambda|x-y|}-e^{-\lambda|x-y|}\right)=\frac{e^{ \pm i \lambda|x-y|}}{4 \lambda^3} \mathcal{F}_{0}^{\pm}\left(\lambda|x-y|\right).
$$
And applying the change of variable $\lambda \rightarrow 2^Ns$, we can rewrite $L_{1,N}^{\pm}$ as
\begin{equation}\label{L}
\begin{split}
& \frac{2^{-2N}}{16} \int_{\mathbb{R}}\Big(\int_0^{\infty} e^{-i t \sqrt{2^{4N}s^4+m^2}} s^{-3} \left(2^{4N}s^4+m^2\right)^{\frac{\ell}{2}} \widetilde{\chi}_2(2^N s)\varphi_0(s) e^{\pm i 2^N s|x-y_1|}  \\
& e^{\pm i 2^N s|y-y_1|} f_{0 0}^\pm(2^N s, x-y_1,y-y_1) V(y_1) ds\Big)dy_1,
\end{split}
\end{equation}
where $f_{0 0}^\pm(2^N s, x-y_1,y-y_1)=\mathcal{F}_0^{\pm}(2^N s|x-y_1|)\mathcal{F}_0^{\pm}(2^N s|y-y_1|).$

Note that for any $s \in \operatorname{supp} \varphi_0(s)$ and $k=0,1$,
$$
\begin{aligned}
\sup_{x, y \in \mathbb{R}}\left|\partial_s^k\left(f_{0 0}^\pm(2^N s, x-y_1,y-y_1)\right)\right| \lesssim 1, \quad
\left|\partial_s^k \left(m^{-\ell} \left(2^{4N}s^4+m^2\right)^{\frac{\ell}{2}}\right)\right| \lesssim 1,~m \neq 0,\ell\leq 0.
\end{aligned}
$$
Let $z=(x,y,y_1)$ and $\Psi(z)=|x-y_1|+|y-y_1|,$
$$
\begin{aligned}
\Phi^{ \pm}(2^N s, m, \ell,z)=
\begin{cases}
\widetilde{\chi}_2(2^N s) f_{0 0}^\pm(2^N s, x-y_1,y-y_1),&\text{if } m=0,-\frac{1}{2}<\ell\leq 0, \\
m^{-\ell}\left(2^{4N}s^4+m^2\right)^{\frac{\ell}{2}}\widetilde{\chi}_2(2^N s) f_{0 0}^\pm(2^N s, x-y_1,y-y_1),&\text{if } m \neq 0,\ell\leq0,
\end{cases}
\end{aligned}
$$
then by Lemma \ref{estimate-integral} and Remark \ref{estimate-integral_remark}, we obtain that the integral with respect to $s$ in \eqref{L} is controlled by
$$
\begin{aligned}
\begin{cases}
  2^{-4\ell} 2^{(-2+2\ell)N}\Theta_{N_0, N}(0,t), & \text{if } m=0,-\frac{1}{2}<\ell\leq 0 \\
  2^{-2N} m^{\ell}\Theta_{N_0, N}(m,t), & \text{if } m \neq 0,\ell\leq0.
\end{cases}
\end{aligned}
$$
Since $V(x) \lesssim \left \langle  x\right \rangle^{-1-}$, then for $m = 0,-\frac{1}{2}<\ell\leq0$,
$$
\left|L_{1,N}^{\pm}(0,\ell,t,x,y)\right| \lesssim 2^{-4\ell} 2^{(-2+2\ell)N}\Theta_{N_0, N}(0,t)\int_{\mathbb{R}} |V(y_1)|  d y_1 \lesssim 2^{-4\ell} 2^{(-2+2\ell)N}\Theta_{N_0, N}(0,t),
$$
and for $m \neq 0,\ell\leq 0$,
$$
\left|L_{1,N}^{\pm}(m,\ell,t,x,y)\right| \lesssim 2^{-2N}m^{\ell}\Theta_{N_0, N}(m,t)\int_{\mathbb{R}} |V(y_1)|  d y_1 \lesssim 2^{-2N}m^{\ell}\Theta_{N_0, N}(m,t).
$$

Finally, by Lemma \ref{sum} (i) and (ii), we immediately get the desired conclusions.
\end{proof}

We next consider the following integral kernels:
$$
L_{2}^{\pm}(m,\ell,t,x,y):=\int_0^{\infty} e^{-i t \sqrt{\lambda^4+m^2}} \lambda^3 \left(\lambda^4+m^2\right)^{\frac{\ell}{2}} \widetilde{\chi}_2(\lambda)[R_0^{\pm}(\lambda^4)VR_V^{\pm}(\lambda^4)VR_0^{\pm}(\lambda^4)](x, y) d\lambda.
$$ To deal with these integral kernels, we utilize the following estimates.
\begin{lemma}[{\cite[Theorem 2.23]{FSY}}]\label{RRR}
Let $k \geq 0$ and $|V(x)| \lesssim(1+|x|)^{-k-1}$ such that $H=\Delta^2+V$ has no embedded positive eigenvalues. Then, for any $\sigma>k+\frac{1}{2}, R_V^{ \pm}(\lambda) \in \mathcal{B}\left(L_\sigma^2(\mathbb{R}), L_{-\sigma}^2(\mathbb{R})\right)$ are $C^k$-continuous for all $\lambda>0$. Furthermore,
$$
\left\|\partial_\lambda^k R_V^{ \pm}(\lambda)\right\|_{L_\sigma^2(\mathbb{R}) \rightarrow L_{-\sigma}^2(\mathbb{R})}=O\left(|\lambda|^{-\frac{3(k+1)}{4}}\right),~\lambda \rightarrow+\infty.
$$
\end{lemma}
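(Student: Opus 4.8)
The statement is a high-energy limiting absorption principle for $H=\Delta^2+V$ with $k$ derivatives in the spectral parameter. The plan is to prove it first for the free operator $\Delta^2$ from the explicit resolvent kernel, and then transfer the bounds to $H$ through the symmetric resolvent identity \eqref{id-RV}, treating $V$ as a high-energy-small perturbation and, crucially, differentiating the inverse $(M^\pm(\lambda))^{-1}$ by the rule $\partial_\lambda(M^\pm)^{-1}=-(M^\pm)^{-1}(\partial_\lambda M^\pm)(M^\pm)^{-1}$ rather than by expanding $R_V$. Here $\lambda>0$ is the spectral parameter, and by \eqref{free_resolvent} (with $\lambda^{1/4}$ in place of $\lambda$) the free kernel is $R_0^\pm(\lambda)(x,y)=F_\pm(\lambda^{1/4}|x-y|)/(4\lambda^{3/4})$ with $F_\pm(s)=\pm ie^{\pm is}-e^{-s}$.

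First I would establish the free estimate. Since every derivative $F_\pm^{(i)}$ is bounded, differentiating the kernel $k$ times in $\lambda$ gives the pointwise bound $|\partial_\lambda^k R_0^\pm(\lambda)(x,y)|\lesssim\langle x-y\rangle^k\lambda^{-3(k+1)/4}$ for $\lambda\gtrsim1$, the factor $\langle x-y\rangle^k$ coming from differentiating the phase $e^{\pm i\lambda^{1/4}|x-y|}$ and each derivative improving the decay by $\lambda^{-3/4}$. Estimating the operator $\langle\cdot\rangle^{-\sigma}\partial_\lambda^k R_0^\pm(\lambda)\langle\cdot\rangle^{-\sigma}$ in Hilbert–Schmidt norm and using $\langle x-y\rangle^k\lesssim\langle x\rangle^k\langle y\rangle^k$ with $\int_{\R}\langle x\rangle^{2k-2\sigma}\,dx<\infty\iff\sigma>k+\tfrac12$, I obtain $\|\partial_\lambda^k R_0^\pm(\lambda)\|_{\mathcal B(L_\sigma^2,L_{-\sigma}^2)}\lesssim\lambda^{-3(k+1)/4}$; this is precisely where the weight threshold $\sigma>k+\tfrac12$ is forced, and both weights are used.

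Next I would perturb. With $v=|V|^{1/2}$, $U=\sgn V$ and $M^\pm(\lambda)=U+vR_0^\pm(\lambda)v$, the decay $|V|\lesssim\langle x\rangle^{-k-1}$ gives $v\in L^2(\R)$ (using the ``$-$'' when $k=0$), so the same Hilbert–Schmidt bound yields $\|vR_0^\pm(\lambda)v\|_{L^2\to L^2}\lesssim\lambda^{-3/4}\|v\|_{L^2}^2\to0$. Hence for large $\lambda$ the operator $M^\pm(\lambda)=U(\Id+UvR_0^\pm(\lambda)v)$ is invertible by Neumann series with $\|(M^\pm(\lambda))^{-1}\|_{L^2\to L^2}\le2$ uniformly, while on each compact subinterval of $(0,\infty)$ the absence of embedded positive eigenvalues supplies the qualitative limiting absorption principle; thus only the decay as $\lambda\to\infty$ remains. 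For $k=0$ the identity $R_V^\pm=R_0^\pm-R_0^\pm v(M^\pm)^{-1}vR_0^\pm$ and the one-sided weighted bounds $\|R_0^\pm v\|_{\mathcal B(L^2,L_{-\sigma}^2)},\,\|vR_0^\pm\|_{\mathcal B(L_\sigma^2,L^2)}\lesssim\lambda^{-3/4}$ (valid for $\sigma>\tfrac12$) already give $\|R_V^\pm(\lambda)\|_{\mathcal B(L_\sigma^2,L_{-\sigma}^2)}\lesssim\lambda^{-3/4}$.

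For general $k$ I would differentiate this identity by Leibniz and expand $\partial_\lambda^j(M^\pm)^{-1}$ via the inverse-derivative rule into products of $(M^\pm)^{-1}$ and the blocks $\partial_\lambda^i M^\pm=v(\partial_\lambda^i R_0^\pm)v$. Every resulting term is then a product of three controlled pieces: an outer factor $(\partial_\lambda^a R_0^\pm)v$ or $v(\partial_\lambda^c R_0^\pm)$, bounded by $\lambda^{-3(a+1)/4}$ in $\mathcal B(L^2,L_{-\sigma}^2)$ (resp.\ $\mathcal B(L_\sigma^2,L^2)$); uniformly bounded copies of $(M^\pm)^{-1}$; and sandwiched doubly weighted blocks $v(\partial_\lambda^i R_0^\pm)v$, bounded by $\lambda^{-3(i+1)/4}$ on $L^2$. \emph{The main obstacle} is the weight bookkeeping, and this is exactly where the matching of $|V|\lesssim\langle x\rangle^{-k-1}$ with $\sigma>k+\tfrac12$ is consumed: the doubly weighted blocks are Hilbert–Schmidt for all $i\le k$ because each $v$ absorbs one factor $\langle x\rangle^{i}$ from $\langle x-y\rangle^i\lesssim\langle x\rangle^i\langle y\rangle^i$ (since $\int\langle x\rangle^{2i-2(k+1)}dx<\infty$ for $i\le k$), while the outer factors spend the remaining weight against $\langle\cdot\rangle^{-\sigma}$ with $\sigma>a+\tfrac12$. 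The decisive point is that differentiating the inverse (instead of $R_V$) keeps every differentiated free resolvent flanked by at least one $v$, so no factor is ever asked to carry more weight than is available; summing the Leibniz terms, all corrections decay strictly faster than $\lambda^{-3(k+1)/4}$, and the bound is dominated by the free term $\partial_\lambda^k R_0^\pm$. Finally, the $C^k$-continuity on $(0,\infty)$ follows by combining these high-energy bounds with the qualitative limiting absorption principle on compact sets.
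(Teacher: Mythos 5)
First, note the context: the paper never proves this lemma --- it is imported verbatim from [FSY, Theorem 2.23] --- so your argument has to stand entirely on its own, and for $k\ge 1$ it does not. The fatal step is exactly the one you call the ``decisive point'': the claim that the doubly weighted blocks $v(\partial_\lambda^i R_0^\pm)v$ are Hilbert--Schmidt for all $i\le k$. Your justification, ``$\int\langle x\rangle^{2i-2(k+1)}dx<\infty$ for $i\le k$'', silently gives each factor $v$ the decay of $V$ itself; but $|v(x)|^2=|V(x)|\lesssim\langle x\rangle^{-(k+1)}$, so $v$ decays only like $\langle x\rangle^{-(k+1)/2}$. Moreover the growth $\langle x-y\rangle^i$ cannot be shared between the two variables: a pointwise bound $\langle x-y\rangle^i\lesssim\langle x\rangle^{i_1}\langle y\rangle^{i_2}$ with $i_1+i_2=i$ and $i_1,i_2<i$ is false (set $y=0$); the only admissible splittings are $\langle x\rangle^i\langle y\rangle^i$ and $\langle x\rangle^i+\langle y\rangle^i$, and under either one each flanking $v$ must absorb a full factor $\langle\cdot\rangle^i$. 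Thus the correct condition is $\int\langle x\rangle^{2i-(k+1)}dx<\infty$, i.e.\ $i<k/2$, and the unavoidable top-order block $v(\partial_\lambda^k R_0^\pm)v$ produced by $\partial_\lambda^k(M^\pm)^{-1}$ (as well as the outer blocks $(\partial_\lambda^a R_0^\pm)v$ with $a\ge k/2$) is not controlled. This is not a repairable estimate but a genuine failure of the decomposition: for $k=1$ and $v(x)=\langle x\rangle^{-1}$, the leading kernel of $v(\partial_\lambda R_0^\pm)v$ is $c\,\lambda^{-3/2}\,v(x)\,|x-y|\,e^{\pm i\lambda^{1/4}|x-y|}\,v(y)$, and applying it to a bump $f$ at the origin gives, for large $|x|$, approximately $c\,\lambda^{-3/2}e^{\pm i\lambda^{1/4}|x|}\,\widehat{vf}(\pm\lambda^{1/4})\,v(x)|x|$ with $v(x)|x|\sim 1\notin L^2(\mathbb{R})$; the operator is honestly unbounded on $L^2$, so no amount of modulus or oscillation bookkeeping inside your scheme can close it. Note this already bites in the range where the paper actually invokes the lemma ($k=1$, $|V|\lesssim\langle x\rangle^{-2-}$).

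The structural fact your proposal collides with is that $\partial_\lambda^i R_0^\pm(\lambda)$ genuinely creates outputs growing like $|x|^i$ (with coefficient $\widehat{f}(\pm\lambda^{1/4})$), hence must be flanked on \emph{each} side by decay strictly greater than $i+\frac12$. Flanking every differentiated free resolvent by factors of $v$ therefore forces $|V|\lesssim\langle x\rangle^{-2k-1-}$, far stronger than the hypothesis $\langle x\rangle^{-k-1}$; so differentiating through the symmetric identity \eqref{id-RV} is the wrong vehicle here. A correct argument (and, structurally, the one behind [FSY, Theorem 2.23]) must keep the \emph{full} potential $V$, with its entire weight $\langle x\rangle^{-k-1}$, adjacent to the differentiated resolvents --- e.g.\ by induction on $k$ through the unsymmetrized equation $R_V^\pm=R_0^\pm-R_0^\pm VR_V^\pm$, where each factor $V$ separates two blocks whose orders of differentiation sum to at most $k$ --- and even then the bookkeeping at the stated borderline decay is delicate. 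Your free-resolvent estimate and your $k=0$ case (granting $v\in L^2$, i.e.\ the ``$-$'' you already flag) are fine; everything from ``For general $k$'' onward needs to be replaced.
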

\begin{proposition}\label{high_2}
Assume that $|V(x)| \lesssim(1+|x|)^{-2-}$. Then
$$
\begin{aligned}
\sup _{x, y \in \mathbb{R}}\left|L_{2}^{\pm}(m,\ell,t,x,y)\right| \lesssim
\begin{cases}
  C_\ell |t|^{-\frac{1+2\ell}{2}}, & \text{if } m=0,-\frac{1}{2}<\ell\leq0, \\
  m^{\ell} \left\langle  m \right\rangle |t|^{-\frac{1}{2}}, & \text{if } m\neq0,\ell\leq0.
\end{cases}
\end{aligned}
$$
\end{proposition}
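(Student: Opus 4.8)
The plan is to run the same machinery as in Proposition \ref{high_1}: insert the Littlewood--Paley decomposition \eqref{decomposition_high}, rescale $\lambda\to 2^N s$, reduce each dyadic piece $L_{2,N}^\pm$ to the oscillatory model integral $\mathcal{K}_N^\pm$ of Lemma \ref{estimate-integral}, and then sum over $N\ge N''$ with Lemma \ref{sum}. The genuinely new feature, compared with the purely free kernel of $L_1^\pm$, is the full resolvent $R_V^\pm(\lambda^4)$ sandwiched between two copies of $V$. Since we only have operator bounds on $R_V^\pm$ and its derivative (Lemma \ref{RRR}), I would not try to control this middle factor pointwise; instead I would factor out the phase depending only on $(x,y)$ and keep $V R_V^\pm(\lambda^4)V$ as an operator between weighted spaces, exactly in the spirit of Proposition \ref{prop_Gamma}.

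Concretely, writing each outer resolvent as $R_0^\pm(\lambda^4)(x,y)=\tfrac{e^{\pm i\lambda|x-y|}}{4\lambda^3}\mathcal{F}_0^\pm(\lambda|x-y|)$ and pulling out $e^{\pm i\lambda(|x|+|y|)}$, I rewrite the kernel as $\tfrac{1}{16\lambda^6}e^{\pm i\lambda(|x|+|y|)}E_{2,N}^\pm(\lambda,x,y)$, where
$$
E_{2,N}^\pm(\lambda,x,y)=\Big\langle [VR_V^\pm(\lambda^4)V]\big(e^{\pm i\lambda(|\cdot-y|-|y|)}\mathcal{F}_0^\pm(\lambda|\cdot-y|)\big),\ e^{\mp i\lambda(|x-\cdot|-|x|)}\mathcal{F}_0^\pm(\lambda|x-\cdot|)\Big\rangle .
$$
After the scaling $\lambda=2^N s$ the powers of $\lambda$ collapse to a prefactor $\tfrac{2^{-2N}}{16}s^{-3}$, and the whole problem is reduced to the two uniform-in-$(x,y)$ estimates $|E_{2,N}^\pm(2^N s,x,y)|\lesssim 2^{-3N}$ and $|\partial_s E_{2,N}^\pm(2^N s,x,y)|\lesssim 2^{-2N}$ on $s\in\operatorname{supp}\varphi_0$.

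These two bounds are the heart of the matter. Using that $V$ is a real self-adjoint multiplier, I write $\langle VR_V^\pm V g,h\rangle=\langle R_V^\pm(\lambda^4)(Vg),Vh\rangle$ and apply the weighted pairing $|\langle R_V^\pm(\lambda^4)\phi,\psi\rangle|\le\|R_V^\pm(\lambda^4)\|_{L^2_\sigma\to L^2_{-\sigma}}\|\phi\|_{L^2_\sigma}\|\psi\|_{L^2_\sigma}$ together with Lemma \ref{RRR}. The de-oscillated amplitudes $e^{\pm i\lambda(|\cdot-y|-|y|)}\mathcal{F}_0^\pm(\lambda|\cdot-y|)$ and their $s$-derivatives are bounded by $\langle\cdot\rangle^k$ ($k=0,1$) just as in Proposition \ref{prop_Gamma}, the only new point being that in the high-energy regime a single $s$-derivative of the residual phase costs a factor $2^N$. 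Multiplying by $V\lesssim\langle x\rangle^{-2-}$, the weighted norms $\|V(\cdots)\|_{L^2_\sigma}$ stay finite for $\sigma\in(\tfrac12,\tfrac12+)$ when a derivative falls on an amplitude (so $R_V^\pm$ is undifferentiated, $k=0$) and for $\sigma\in(\tfrac32,\tfrac32+)$ when no derivative falls on an amplitude (so $R_V^\pm$ is differentiated, $k=1$); these are precisely the admissible ranges in Lemma \ref{RRR}, and this is where the hypothesis $\mu>2$ enters. Combining this with $\|\partial_\mu^k R_V^\pm(\mu)\|=O(\mu^{-3(k+1)/4})$ and the chain rule $\partial_s[R_V^\pm((2^Ns)^4)]=4\cdot 2^{4N}s^3\,(\partial_\mu R_V^\pm)((2^Ns)^4)$ yields $\|R_V^\pm((2^Ns)^4)\|\lesssim 2^{-3N}$ and $\|\partial_s R_V^\pm((2^Ns)^4)\|\lesssim 2^{-2N}$, giving the claimed size and derivative bounds on $E_{2,N}^\pm$.

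Finally I would set $\Phi^\pm(2^Ns,m,\ell,z)=2^{2N}m^{-\ell}(2^{4N}s^4+m^2)^{\ell/2}\widetilde{\chi}_2(2^Ns)\,E_{2,N}^\pm(2^Ns,x,y)$, for which $|\partial_s^k\Phi^\pm|\lesssim 1$ ($k=0,1$), the residual factor $2^{-N}\le\lambda_0^{-1/4}$ being a harmless fixed constant for $N\ge N''$. Then $L_{2,N}^\pm=c\,m^\ell 2^{-4N}\mathcal{K}_N^\pm(m,\ell,t,z)$ with $h=-3$ and $\Psi(z)=|x|+|y|$, so Lemma \ref{estimate-integral}(ii) gives $\sup_{x,y}|L_{2,N}^\pm|\lesssim m^\ell 2^{-4N}\Theta_{N_0,N}(m,t)$; since $2^{-4N}\le 2^N$ for $N\ge0$ and only finitely many $N\in[N'',0)$ remain, summing by Lemma \ref{sum}(ii) produces $m^\ell\langle m\rangle|t|^{-\frac12}$, while the case $m=0$ follows identically via Remark \ref{estimate-integral_remark}, Lemma \ref{estimate-integral}(i) and Lemma \ref{sum}(i). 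I expect the main obstacle to be exactly the $\lambda$-dependence of $R_V^\pm$: one must verify that each $s$-differentiation—whether it hits $R_V^\pm$ or a residual phase, both of which generate a factor $2^N$ in the high-energy range—is absorbed by the decay $\|R_V^\pm\|=O(\lambda^{-3})$ and the $\lambda^{-3}$ prefactor, all while keeping the weighted norms finite under only the decay $\mu>2$.
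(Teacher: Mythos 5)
Your proposal is correct and follows essentially the same route as the paper's own proof: Littlewood--Paley decomposition, factoring out the phase $e^{\pm i\lambda(|x|+|y|)}$ to define $E_{2,N}^{\pm}$, bounding it and its $s$-derivative via Lemma \ref{RRR} with the weighted H\"older pairing, and then invoking Lemma \ref{estimate-integral} with $h=-3$, $\Psi(z)=|x|+|y|$ and Lemma \ref{sum}. The only (harmless) difference is cosmetic: you normalize $\Phi^{\pm}$ by the explicit factor $2^{2N}$ so that $|\partial_s^k\Phi^{\pm}|\lesssim 1$ holds uniformly, whereas the paper absorbs that factor into the implicit constant; both yield the same bound $2^{-4N}m^{\ell}\Theta_{N_0,N}(m,t)$ per dyadic piece.
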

\begin{proof}
 By decomposition \eqref{decomposition_high}, it suffices to deal with the following integrals $L_{2,N}^{\pm}(m,\ell,t,x,y)$ for each $N \geq N^{\prime\prime}$ and sum up:
$$
\begin{aligned}
\int_0^{\infty} e^{-it \sqrt{\lambda^4+m^2}} \lambda^3 \left(\lambda^4+m^2\right)^{\frac{\ell}{2}} \widetilde{\chi}_2(\lambda) \varphi_0(2^{-N} \lambda)
\left\langle V R_V^{ \pm}(\lambda^4) V\left(R_0^{ \pm}\left(\lambda^4\right)(*, y)\right)(\cdot),\left(R_0^{ \pm}(\lambda^4)\right)^*(x, \cdot)\right\rangle d\lambda.
\end{aligned}
$$
Indeed,
$$
R_0^{\pm}(\lambda^4)(x, y)=\frac{1}{4 \lambda^3}\left( \pm ie^{\pm i \lambda|x-y|}-e^{-\lambda|x-y|}\right)=\frac{e^{ \pm i \lambda|x-y|}}{4 \lambda^3} \mathcal{F}_{0}^{\pm}\left(\lambda|x-y|\right).
$$
It follows that
$$
\begin{aligned}
& \left\langle VR_V^{ \pm}(\lambda^4) V\left(R_0^{\pm}(\lambda^4)(*, y)\right)(\cdot), R_0^{\mp}(\lambda^4)(x, \cdot)\right\rangle \\
&= \frac{1}{16 \lambda^6}\left\langle V R_V^{\pm} V\left(e^{\pm i \lambda|*-y|} \mathcal{F}_0^{\pm}(\lambda|*-y|)\right)(\cdot),\left(e^{\mp i \lambda|x-\cdot|} \mathcal{F}_0^{\mp}(\lambda|x-\cdot|)\right)\right\rangle \\
&= \frac{1}{16 \lambda^6} e^{\pm i \lambda|x|} e^{\pm i \lambda|y|}\left\langle V R_V^{\pm} V\left(e^{\pm i \lambda(|*-y|-|y|)} \mathcal{F}_0^{\pm}(\lambda|*-y|)\right)(\cdot)\left(e^{\mp i \lambda(|x-\cdot|-|x|)} \mathcal{F}_0^{\mp}(\lambda|x-\cdot|)\right)\right\rangle \\
&:= \frac{1}{16 \lambda^6} e^{\pm i \lambda(|x|+|y|)} E_{2,N}^{\pm}(\lambda,x,y).
\end{aligned}
$$
Utilizing the change of variable $\lambda \rightarrow 2^N s$, we can rewrite $L_{2,N}^{\pm}(m,\ell,t,x,y)$ as
$$
\begin{aligned}
\frac{2^{-2N}}{16} \int_0^{\infty} e^{-it \sqrt{2^{4N} s^4+m^2}} e^{\pm i 2^N s(|x|+|y|)}  s^{-3} \widetilde{\chi}_2(2^N s)\varphi_0(s)
 \left(2^{4N}s^4+m^2\right)^{\frac{\ell}{2}}  E_{2,N}^{\pm}(2^N s , x, y) ds.
\end{aligned}
$$
Note that for any $s \in \operatorname{supp} \varphi_0(s),k=0,1$,
$$
\begin{aligned}
& \left|\partial_s^k\left(e^{\pm i 2^N s(|*-y|-|y|)} \mathcal{F}_0^{\pm}(2^N s|*-y|)\right)\right| \lesssim 2^{kN}\langle *\rangle^k, \\
& \left|\partial_s^k\left(e^{ \pm i 2^N s(|x-\cdot|-|x|)} \mathcal{F}_0^{\pm}(2^N s)(x, \cdot)\right)\right| \lesssim 2^{kN}\langle\cdot\rangle^k.
\end{aligned}
$$
By Lemma \ref{RRR}, for $\sigma>k+\frac{1}{2}$ with $k=0,1$, we obtain
$$
\left\|\partial_s^k R_V^{ \pm}(2^{4N} s^4)\right\|_{L_\sigma^2 \rightarrow L_{-\sigma}^2} \lesssim 2^{kN}\left(2^N s\right)^{-3}.
$$
Furthermore, since $|V(x)| \lesssim \left \langle  x\right \rangle^{-2-}$, by H\"older's inequality, then for $\sigma>k+\frac{1}{2}$ with $k=0,1$,
$$
\begin{aligned}
\left|\partial_s^k E_{2, N}^{\pm}(2^Ns,x,y)\right|
& \lesssim \sum_{k=0}^12^{(1-k)N} \left\|V(\cdot)\langle\cdot\rangle^{\sigma+1-k}\right\|_{L^2}^2 \cdot\left\|\partial_s^k R_V^{\pm}(2^{4 N} s^4)\right\|_{L_\sigma^2 \rightarrow L_{-\sigma}^2}  \lesssim 2^{N}\left(2^N s\right)^{-3}.
\end{aligned}
$$
Note that for $m \neq 0,\ell\leq0$,
$$
\left|\partial_s^k \left(m^{-\ell} \left(2^{4N}s^4+m^2\right)^{\frac{\ell}{2}}\right)\right| \lesssim 1.
$$
Let $z=(x, y)$ and $\Psi(z)=|x|+|y|,$
$$
\begin{aligned}
\Phi^{ \pm}(2^N s, m,\ell, z)=
\begin{cases}
\widetilde{\chi}_2(2^N s)E_{2,N}^{\pm}(2^N s,m,x,y), &\text{if } m=0,-\frac{1}{2}<\ell\leq 0, \\
m^{-\ell} \left(2^{4N}s^4+m^2\right)^{\frac{\ell}{2}}\widetilde{\chi}_2(2^N s)E_{2,N}^{\pm}(2^N s,m,x,y), &\text{if } m \neq 0,\ell\leq0,
\end{cases}
\end{aligned}
$$
then by Lemma \ref{estimate-integral} and Remark \ref{estimate-integral_remark}, we have
$$
\left|L_2^{\pm}(m,\ell,t,x,y)\right| \lesssim
\begin{cases}
  2^{-4\ell} 2^{(-4+2\ell)N}\Theta_{N_0, N}(0,t), & \text{if } m=0,-\frac{1}{2}<\ell\leq0, \\
  2^{-4N}m^{\ell}\Theta_{N_0, N}(m, t), & \text{if } m\neq 0,\ell\leq0
\end{cases}
$$
uniformly in $x, y$.

Finally, by Lemma \ref{sum} (i) and (ii), we obtain the desired conclusions.
\end{proof}

With Propositions \ref{free_case}--\ref{free_case_m}, \ref{high_1} and \ref{high_2} at hand, we immediately get the corresponding high energy decay estimates of
 $$\cos(t\sqrt{H}),\ \ \cos(t\sqrt{H+m^2}) \  \text{and}\  \frac{\sin(t\sqrt{H+m^2})}{\sqrt{H+m^2}},~m\neq0,
 $$
 in Theorem \ref{main_theorem_high}. And note that
 $$
 \left\|\frac{\sin (t \sqrt{H})}{\sqrt{H}} P_{a c}(H)\chi_2(H) \right\|_{L^1 \rightarrow L^{\infty}} \lesssim \int_{-t}^{t}\left\|\cos(s\sqrt{H})P_{a c}(H)\chi_2(H)\right\|_{L^1 \rightarrow L^{\infty}}ds \lesssim |t|^{\frac{1}{2}}.
 $$
 Therefore, the proof of Theorem \ref{main_theorem_high} is completed.

\bigskip


{\bf Acknowledgements:}
S. Chen, Z. Wan and X. Yao are partially supported by NSFC grants No.11771165 and 12171182. The authors would like to express their thanks to Professor Avy Soffer for his interests and insightful discussions about topics on higher-order operators.


\end{document}